\newtheorem{theorem}{Theorem}[section]
\newtheorem{conjecture}[theorem]{Conjecture}
\newtheorem{lemma}[theorem]{Lemma}
\let\le\leqslant
\let\ge\geqslant
\renewcommand{\pmod}[1]{\;\allowbreak(\operatorname{mod}#1)}
\numberwithin{equation}{section}
\begin{document}

\title{Dwork-type supercongruences through~a~creative~$q$-microscope}

\date{7 January 2020. \emph{Revised}: 23 November 2020}

\author{Victor J. W. Guo}
\address{School of Mathematics and Statistics, Huaiyin Normal University, Huai'an 223300, Jiangsu, People's Republic of China}
\email{jwguo@hytc.edu.cn}

\author{Wadim Zudilin}
\address{Department of Mathematics, IMAPP, Radboud University, PO Box 9010, 6500~GL Nijmegen, Netherlands}
\email{w.zudilin@math.ru.nl}

\thanks{The first author was partially supported by the National Natural Science Foundation of China (grant 11771175).}

\subjclass[2010]{11A07, 11B65, 11F33, 33C20, 33D15}
\keywords{Hypergeometric series; supercongruence; $q$-congruence; creative microscoping.}

\begin{abstract}
We develop an analytical method to prove congruences of the type
$$
\sum_{k=0}^{(p^r-1)/d}A_kz^k \equiv \omega(z)\sum_{k=0}^{(p^{r-1}-1)/d}A_kz^{pk} \pmod{p^{mr}\mathbb Z_p[[z]]}
\quad \text{for}\; r=1,2,\dots,
$$
for primes $p>2$ and fixed integers $m,d\ge1$, where $f(z)=\sum_{k=0}^\infty A_kz^k$ is an `arithmetic' hypergeometric series.
Such congruences for $m=d=1$ were introduced by Dwork in 1969 as a tool for $p$-adic analytical continuation of $f(z)$.
Our proofs of several Dwork-type congruences corresponding to $m\ge2$ (in other words, supercongruences) are based on constructing and proving their suitable $q$-analogues, which in turn have their own right for existence and potential for a $q$-deformation of modular forms and of cohomology groups of algebraic varieties.
Our method follows the principles of creative microscoping introduced by us to tackle $r=1$ instances of such congruences;
it is the first method capable of establishing the supercongruences of this type for general~$r$.
\end{abstract}

\maketitle

\section{Introduction}
\label{sec1}


Extending his work on the rationality of the zeta function of an algebraic variety defined over a finite field, Dwork \cite{Dw69} considered a question of continuing analytical solutions $f(z)=\sum_{k=0}^\infty A_kz^k$ of linear differential equations $p$-adically.
A general strategy was to verify that the truncated sums $f_r(z)=\sum_{k=0}^{p^r-1}A_kz^k$, where $r=0,\allowbreak1,2,\dots$, satisfy the so-called Dwork congruences \cite{MV16}
\begin{equation}
\frac{f_{r+1}(z)}{f_r(z^p)}
\equiv\frac{f_r(z)}{f_{r-1}(z^p)}\pmod{p^r\mathbb Z_p[[z]]}
\quad \text{for}\; r=1,2,\dots
\label{eq1}
\end{equation}
(see \cite[Theorem~3]{Dw69} for a precise statement).
Formally, one needs the condition $f_1(z^p)=\sum_{k=0}^{p-1}A_kz^{pk}\not\equiv0\pmod{p\mathbb Z_p[[z]]}$ to make sense of \eqref{eq1}.
Then the congruences imply the existence of a $p$-adic analytical function (`unit root') $\omega(z)$ such that
$$
\omega(z)=\lim_{r\to\infty}\frac{f_r(z)}{f_{r-1}(z^p)};
$$
in other words,
$$
\omega(z)\equiv\frac{f_r(z)}{f_{r-1}(z^p)}\pmod{p^r\mathbb Z_p[[z]]}
\quad \text{for}\; r=1,2,\dotsc.
$$
Notice that the argument extends to the cases when $f_1(z^p)\equiv0\pmod{p\mathbb Z_p[[z]]}$ but $f_1(z^p)\not\equiv0\pmod{p^m\mathbb Z_p[[z]]}$ for some $m\ge2$, provided the congruences \eqref{eq1} hold modulo a higher power of~$p$, for example,
\begin{equation}
\frac{f_{r+1}(z)}{f_r(z^p)}
\equiv\frac{f_r(z)}{f_{r-1}(z^p)}\pmod{p^{mr}\mathbb Z_p[[z]]}
\quad \text{for}\; r=1,2,\dotsc.
\label{eq1a}
\end{equation}
It is this type of congruences that we refer to as Dwork-type supercongruences; other truncations of the initial power series are possible as well, usually of the type $f_r(z)=\sum_{k=0}^{(p^r-1)/d}A_kz^k$
for some fixed positive integer~$d$.
Whether the congruences \eqref{eq1a} are `super' ($m\ge2$) or not ($m=1$), we conclude from them that
\begin{equation}
f_r(z)\equiv \omega(z)f_{r-1}(z^p)\pmod{p^{mr}\mathbb Z_p[[z]]}
\quad \text{for}\; r=1,2,\dotsc.
\label{eq2}
\end{equation}
This gives an equivalent\,---\,somewhat more transparent\,---\,way to state Dwork-type (super)congruences in the case of known unit root $\omega(z)$.

Our illustrative examples include
\begin{align}
\sum_{k=0}^{(p^r-1)/2}(8k+1)\frac{\binom{4k}{2k}{\binom{2k}{k}}^2}{2^{8k}3^{2k}}
&\equiv p\biggl(\frac{-3}p\biggr)
\sum_{k=0}^{(p^{r-1}-1)/2}(8k+1)\frac{\binom{4k}{2k}{\binom{2k}{k}}^2}{2^{8k}3^{2k}} \pmod{p^{3r}}, \label{ram1a-r}  \\
\sum_{k=0}^{p^r-1}(8k+1)\frac{\binom{4k}{2k}{\binom{2k}{k}}^2}{2^{8k}3^{2k}}
&\equiv p\biggl(\frac{-3}p\biggr)
\sum_{k=0}^{p^{r-1}-1}(8k+1)\frac{\binom{4k}{2k}{\binom{2k}{k}}^2}{2^{8k}3^{2k}} \pmod{p^{3r}}, \label{ram1b-r}
\end{align}
where $\bigl(\frac{-3}{\cdot}\bigr)$ denotes the Kronecker symbol,
valid for any prime $p>3$ and integer $r\ge1$ and corresponding to the
truncation of the power series
$$
\sum_{k=0}^{\infty}(8k+1)\binom{4k}{2k}{\binom{2k}{k}}^2\frac{z^k}{2^{8k}3^{2k}}
$$
at $z=1$.
We point out that not so many supercongruences of this type are recorded in the literature; the principal sources are the conjectures from Swisher's paper \cite{Sw15}, in turn built on Van Hamme's list \cite{VH97},
and a geometric heuristics for hypergeometric series $f(z)$ outlined by Roberts and Rodriguez-Villegas in~\cite{RRV19}.
The only \emph{proven} cases known (namely, weaker forms of Conjectures (C.3) and (J.3) from \cite{Sw15} together with their companions) for arbitrary $r\ge1$ are due to the first author \cite{Gu20d}.

The principal goal of this paper is to extend the approach of \cite{Gu20d} and establish general techniques for proving Dwork-type supercongruences
using the method of creative microscoping, which we initiated in~\cite{GZ19a} for proving $r=1$ instances of such supercongruences.
Observe that such $r=1$ cases of \eqref{ram1a-r}, \eqref{ram1b-r} (known as Ramanujan-type supercongruences \cite{Zu09}) served as principal illustrations of how the creative microscope machinery works.
It should be therefore not surprising that we place them again as principal targets.
Here we prove Dwork-type supercongruences \eqref{ram1a-r}, \eqref{ram1b-r} by establishing the following $q$-analogues of them.

\begin{theorem}
\label{main-1}
Let $n>1$ be an integer coprime with $6$ and let $r\ge 1$. Then, modulo $[n^r]\prod_{j=1}^r\Phi_{n^j}(q)^2$,
\begin{align}
&\sum_{k=0}^{(n^r-1)/2}[8k+1]\frac{(q;q^2)_k^2 (q;q^2)_{2k}}{(q^6;q^6)_k^2 (q^2;q^2)_{2k}}q^{2k^2} \notag\\
&\qquad\equiv
q^{(1-n)/2}[n]\biggl(\frac{-3}{n}\biggr)
\sum_{k=0}^{(n^{r-1}-1)/2}[8k+1]_{q^n}\frac{(q^n;q^{2n})_k^2 (q^n;q^{2n})_{2k}}{(q^{6n};q^{6n})_k^2 (q^{2n};q^{2n})_{2k}}q^{2nk^2},
\label{q4a}
\displaybreak[2]\\
&\sum_{k=0}^{n^r-1}[8k+1]\frac{(q;q^2)_k^2 (q;q^2)_{2k}}{(q^6;q^6)_k^2 (q^2;q^2)_{2k}}q^{2k^2}  \notag\\
&\qquad\equiv
q^{(1-n)/2}[n]\biggl(\frac{-3}{n}\biggr)
\sum_{k=0}^{n^{r-1}-1}[8k+1]_{q^n}\frac{(q^n;q^{2n})_k^2 (q^n;q^{2n})_{2k}}{(q^{6n};q^{6n})_k^2 (q^{2n};q^{2n})_{2k}}q^{2nk^2}.
\label{q4b}
\end{align}
\end{theorem}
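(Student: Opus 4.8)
The overall strategy is the creative-microscoping method: I would embed \eqref{q4a} into a one-parameter family by deforming the Pochhammer factors $(q;q^2)_k$ in the summand through an auxiliary variable~$a$ (replacing $(q;q^2)_k^2$ by $(aq;q^2)_k(q/a;q^2)_k$, and correspondingly in the denominator), prove a parametric $q$-congruence for the deformed truncated sum, and then recover \eqref{q4a} by letting $a\to1$. Since the cyclotomic polynomials $\Phi_d(q)$ attached to the distinct divisors $d\mid n^r$ are pairwise coprime in $\mathbb Z[q]$, the Chinese remainder theorem reduces the target modulus $[n^r]\prod_{j=1}^r\Phi_{n^j}(q)^2$ to its prime-power cyclotomic constituents: the factor $\Phi_{n^j}(q)^3$ for each $j=1,\dots,r$ (the two powers from the product together with the single power contributed by $[n^r]$), and a single power $\Phi_d(q)$ for every remaining $d\mid n^r$, $d>1$. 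For the innermost scale this is exactly the $r=1$ situation treated in \cite{GZ19a}: the deformed sum is terminating and very-well-poised, and a terminating very-well-poised summation/transformation (of Watson ${}_8\phi_7$ type) puts it in closed form, from which both the unit-root prefactor $q^{(1-n)/2}[n]\bigl(\frac{-3}{n}\bigr)$ and the requisite vanishing are read off.

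The two powers of $\Phi_{n^j}(q)$ coming from the product are produced by a double-zero argument in the parameter: specialising $a$ to a primitive $n^j$-th root of unity makes the deformed sum summable, and I would show that not only this value but also its first $a$-derivative vanishes there, so that the difference of the two sides acquires the square $\Phi_{n^j}(q)^2$ (an $a$-analogue of l'Hôpital's rule applied to the closed form). The remaining, third power of $\Phi_{n^j}(q)$ — reflecting the divisibility of the truncated series by $[n^j]$ — is intrinsic to the terminating sum and would be isolated separately. The genuinely new ingredient, however, is the passage to general~$r$: the right-hand side of \eqref{q4a} carries the Frobenius substitution $q\mapsto q^n$, under which $\Phi_{n^j}(q^n)=\Phi_{n^{j+1}}(q)$, so the tower of scales $\Phi_n,\Phi_{n^2},\dots,\Phi_{n^r}$ is shifted up by one step. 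This scale-shift is what allows an induction on~$r$ to close: the contributions of the lower scales $j<r$ on the left should match, after $q\mapsto q^n$, those governed by the inductive hypothesis for the right-hand sum, while the top scale $\Phi_{n^r}(q)$ is generated afresh by the $r=1$ mechanism applied at the coarsest level of truncation.

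I expect the \emph{main obstacle} to be precisely this inductive propagation: one needs a parametric congruence that is uniform in the truncation level and compatible with the substitution $q\mapsto q^n$ (and a matching twist of~$a$) in such a way that a residue block of the level-$r$ sum is carried exactly onto a term of the level-$(r-1)$ sum in base~$q^n$, with the unit-root prefactor $q^{(1-n)/2}[n]\bigl(\frac{-3}{n}\bigr)$ emerging with the correct power of~$q$ and Kronecker sign and without accumulating spurious factors across scales. Once the parametric congruence is in place, setting $a\to1$ (and using the limiting values of the deformed denominators) yields \eqref{q4a}; the companion congruence \eqref{q4b}, with the larger upper limits $n^r-1$ and $n^{r-1}-1$, follows by the same scheme with the truncation adjusted, the additional terms being controlled by the same cyclotomic factors.
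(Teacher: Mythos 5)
Your overall framework (deform $(q;q^2)_k^2$ to $(aq;q^2)_k(q/a;q^2)_k$, prove a parametric congruence, let $a\to1$, and finish by combining cyclotomic factors with the $[n^r]$ divisibility via coprimality/lcm) matches the paper's. But the core mechanism you propose for producing the powers of $\Phi_{n^j}(q)$ is not the one that works, and the step you yourself flag as the main obstacle --- the ``inductive propagation'' across scales --- is exactly the gap: the paper does \emph{not} induct on $r$, and your sketch gives no way to close that induction. What the paper actually does (Theorem~\ref{main-1gen}) is take as modulus the product of \emph{simple} linear factors
$[n^r]\prod_{j=0}^{(n^{r-1}-1)/d}(1-aq^{(2j+1)n})(a-q^{(2j+1)n})$
over the \emph{whole range} of $j$, and then verifies that at each specialization $a=q^{\pm(2j+1)n}$ the two sides of the parametric congruence are \emph{identically equal}: the left-hand side terminates at $k=((2j+1)n-1)/2$ and is evaluated by the closed form of Lemma~\ref{lem:2} with $n$ replaced by $(2j+1)n$, while the right-hand side is evaluated by the same lemma in base $q^n$ with $n$ replaced by $2j+1$; the two closed forms agree because $[n][2j+1]_{q^n}=[(2j+1)n]$ and $\bigl(\frac{-3}{n}\bigr)\bigl(\frac{-3}{2j+1}\bigr)=\bigl(\frac{-3}{(2j+1)n}\bigr)$. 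This single computation handles all scales at once. The high powers $\Phi_{n^j}(q)^2$ then appear only in the limit $a\to1$, because many of the distinct linear factors $(1-aq^{(2j+1)n})(a-q^{(2j+1)n})$ degenerate simultaneously onto multiples of the same $\Phi_{n^j}(q)$ (a counting of how many $2j+1$ are divisible by $n^{j-1}$), with a careful subtraction of the powers swallowed by the denominators $(aq^6;q^6)_{(n^r-1)/d}(q^6/a;q^6)_{(n^r-1)/d}$.

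By contrast, your proposed ``double-zero'' argument --- specializing $a$ to a primitive $n^j$-th root of unity and showing the value and first $a$-derivative both vanish --- is not how the squares arise (even in the $r=1$ case of \cite{GZ19a} the square $\Phi_n(q)^2$ comes from the two distinct simple roots $a=q^{n}$ and $a=q^{-n}$ collapsing at $a=1$, not from a double root), and nothing in your sketch produces the needed $\Phi_{n^j}(q)^2$ for the intermediate scales $1\le j<r$. Without the evaluation of both sides at all the points $a=q^{\pm(2j+1)n}$ (or some equivalent supply of $2n^{r-j}$ linear factors degenerating onto $\Phi_{n^j}(q)$), the argument cannot reach the stated modulus, and the induction on $r$ you invoke as a substitute is left entirely unproved. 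The remaining ingredients you describe (the $r=1$ Watson-type evaluation, the separate $[n^r]$ divisibility from \cite[Theorem 1.1]{GZ19a}, and the final lcm step) are consistent with the paper.
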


Here and throughout the paper we adopt the standard $q$-notation:
$(a;q)_n=(1-a)(1-aq)\dotsb(1-aq^{n-1})$ is the $q$-shifted factorial ($q$-Pochhammer symbol),
$[n]=[n]_q=(1-q^n)/(1-q)$ is the $q$-integer,
and
\begin{align*}
\Phi_n(q)=\prod_{\substack{1\le k\le n\\ \gcd(n,k)=1}}(q-\zeta_n^k),
\end{align*}
is the $n$-th cyclotomic polynomial, where $\zeta_n=e^{2\pi i/n}$ is an $n$-th primitive root of unity.
Also recall the ordinary shifted factorial $(a)_n=\Gamma(a+n)/\Gamma(a)=a(a+\nobreak1)\allowbreak\dotsb(a+n-1)$ for $n=0,1,2,\dots$\,.
In what follows, the congruence $A_1(q)/A_2(q)\equiv0\pmod{P(q)}$
for polynomials $A_1(q),A_2(q),P(q)\in\mathbb Z[q]$ is understood as $P(q)$ divides $A_1(q)$ and is coprime with $A_2(q)$;
more generally, $A(q)\equiv B(q)\pmod{P(q)}$ for rational functions $A(q),B(q)\in\mathbb Z(q)$ means $A(q)-B(q)\equiv0\pmod{P(q)}$.

It is not hard to check (see \cite{GZ19a,Zu20} for related details of this computation) that, when $n=p$ is a prime and $q\to 1$, the $q$-supercongruences \eqref{q4a} and \eqref{q4b} reduce to \eqref{ram1a-r} and \eqref{ram1b-r}, respectively.

Another family of Dwork-type supercongruences
\begin{align}
\sum_{k=0}^{(p^r-1)/2} \frac{(\frac{1}{2})_k^3}{k!^3}(3k+1)2^{2k}
&\equiv p \sum_{k=0}^{(p^{r-1}-1)/2} \frac{(\frac{1}{2})_k^3}{k!^3}(3k+1)2^{2k} \pmod{p^{3r}}, \label{3k+1-a}\\
\sum_{k=0}^{p^r-1} \frac{(\frac{1}{2})_k^3}{k!^3}(3k+1)2^{2k}
&\equiv p \sum_{k=0}^{p^{r-1}-1} \frac{(\frac{1}{2})_k^3}{k!^3}(3k+1)2^{2k} \pmod{p^{4r-\delta_{p,3}}},  \label{3k+1-b}
\end{align}
expectedly valid for any prime $p>2$ and integer $r\ge1$, originate from the \emph{divergent} hypergeometric series
$$
\sum_{k=0}^\infty \frac{(\frac{1}{2})_k^3}{k!^3}(3k+1)(2^2z)^k
$$
at $z=1$. (Here $\delta_{i,j}$ is the usual Kronecker delta, $\delta_{i,j}=1$ if $i=j$ and $\delta_{i,j}=0$ otherwise.)
The congruences \eqref{3k+1-a} and \eqref{3k+1-b} modulo $p^3$ merge into the single entry
\begin{align}
\sum_{k=0}^{(p-1)/2} \frac{(\frac{1}{2})_k^3}{k!^3}(3k+1)2^{2k} \equiv p \pmod{p^3}
\quad\text{for $p>2$},  \label{eq:div-1}
\end{align}
when $r=1$, because $(\frac12)_k\equiv0\pmod p$ for $(p-1)/2<k\le p-1$; these `divergent' Ramanujan-type supercongruences were proved by Guillera and the second author \cite{GZ12} (while independently observed numerically by Sun \cite[Conjecture 5.1\,(ii)]{Su11}).
The first author \cite{Gu20a} gave a $q$-analogue of \eqref{eq:div-1} and recorded \eqref{3k+1-a}, \eqref{3k+1-b} as
conjectures. In this paper we prove the supercongruences \eqref{3k+1-a}, \eqref{3k+1-b} modulo $p^{3r}$ by establishing the following $q$-counterparts.

\begin{theorem}
\label{main-2}
Let $n>1$ be odd and $r\ge 1$. Then, modulo $[n^r]\prod_{j=1}^r\Phi_{n^j}(q)^2$,
\begin{align}
\sum_{k=0}^{(n^r-1)/2}[3k+1]\frac{(q;q^2)_k^3 q^{-{k+1\choose 2} } }{(q;q)_k^2 (q^2;q^2)_k}
&\equiv q^{(1-n)/2}[n]\sum_{k=0}^{(n^{r-1}-1)/2}[3k+1]_{q^n}\frac{(q^n;q^{2n})_k^3 q^{-n{k+1\choose 2} } }{(q^n;q^n)_k^2 (q^{2n};q^{2n})_k}, \label{eq:q-div-WZ-1} \\
\sum_{k=0}^{n^r-1}[3k+1]\frac{(q;q^2)_k^3 q^{-{k+1\choose 2} } }{(q;q)_k^2 (q^2;q^2)_k}
&\equiv q^{(1-n)/2}[n]\sum_{k=0}^{n^{r-1}-1}[3k+1]_{q^n}\frac{(q^n;q^{2n})_k^3 q^{-n{k+1\choose 2} } }{(q^n;q^n)_k^2 (q^{2n};q^{2n})_k}.  \label{eq:q-div-WZ-2}
\end{align}
\end{theorem}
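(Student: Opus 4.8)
\emph{Proof proposal.} The plan is to prove Theorem~\ref{main-2} by creative microscoping, first reducing the single congruence modulo $[n^r]\prod_{j=1}^r\Phi_{n^j}(q)^2$ to a family of congruences modulo pairwise coprime cyclotomic factors. Since $[n^r]=\prod_{d\mid n^r,\,d>1}\Phi_d(q)$, the target modulus equals $\prod_{j=1}^r\Phi_{n^j}(q)^3$ times $\prod_d\Phi_d(q)$ over the divisors $d\mid n^r$, $d>1$, that are not powers of $n$; hence it suffices to prove the congruence modulo $\Phi_{n^j}(q)^3$ for each $j=1,\dots,r$ and modulo $\Phi_d(q)$ for each such $d$, and then to assemble them by coprimality. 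The base case $r=1$ is the $q$-supercongruence of \cite{Gu20a}. Throughout I attach a parameter $a$ to the summand, replacing two of the three numerator factors $(q;q^2)_k$ by $(aq;q^2)_k(q/a;q^2)_k$, and set
\[
L_r(a,q)=\sum_{k=0}^{(n^r-1)/2}[3k+1]\frac{(aq;q^2)_k(q/a;q^2)_k(q;q^2)_k\,q^{-\binom{k+1}{2}}}{(q;q)_k^2(q^2;q^2)_k},
\]
which is invariant under $a\mapsto a^{-1}$ and reduces to the left-hand side of \eqref{eq:q-div-WZ-1} at $a=1$; the parameter $a$ is the microscope that exposes a squared cyclotomic factor invisible at $a=1$.

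The observation that makes all scales $1\le j\le r$ accessible through the \emph{same} long sum is a truncation mechanism. At $a=q^{-n^j}$ the factor $(aq;q^2)_k=(q^{1-n^j};q^2)_k$ contains the vanishing term $1-q^{0}$ as soon as $k\ge(n^j+1)/2$, so $L_r(a,q)\big|_{a=q^{-n^j}}$ collapses to the level-$j$ truncation $\sum_{k=0}^{(n^j-1)/2}$; by the $a\mapsto a^{-1}$ symmetry the same happens at $a=q^{n^j}$. For each $j$ I would then prove the parametric congruence $L_r(a,q)\equiv q^{(1-n)/2}[n]\,L_{r-1}(a,q^n)$ modulo $(1-aq^{n^j})(a-q^{n^j})$ by evaluating both sides at the two roots $a=q^{\pm n^j}$: there the deformed sums terminate (the left side at level $j$, the right side at level $j-1$ in base $q^n$) and can be summed in closed form through a suitable terminating $q$-hypergeometric identity, the $q$-analogue of the summation underlying the divergent Ramanujan-type series of \cite{Gu20a}; matching these closed forms over the degree-two ideal establishes the parametric congruence. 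A further congruence modulo $\Phi_{n^j}(q)$, valid for all $a$, upgrades the modulus to $\Phi_{n^j}(q)(1-aq^{n^j})(a-q^{n^j})$. Specialising $a\to1$ turns $(1-aq^{n^j})(a-q^{n^j})$ into $(1-q^{n^j})^2$, divisible by $\Phi_{n^j}(q)^2$, so altogether we obtain the sought congruence modulo $\Phi_{n^j}(q)^3$ for every $j$.

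For the residual factors $\Phi_d(q)$ with $d\mid n^r$ not a power of $n$, I would use the standard device of partitioning the summation range into blocks governed by the multiplicative order of $q$ modulo $\Phi_d(q)$ and checking that only one block contributes, so that the long level-$r$ sum collapses onto the short level-$(r-1)$ one. Since $\Phi_{n^1}(q),\dots,\Phi_{n^r}(q)$ and the remaining $\Phi_d(q)$ are pairwise coprime, combining the cubic congruences at the prime-power scales with the linear congruences at the other divisors reconstitutes the full modulus $[n^r]\prod_{j=1}^r\Phi_{n^j}(q)^2$; the single copy of each $\Phi_{n^j}(q)$ contributed by $[n^r]$ is exactly the one absorbed into the cube $\Phi_{n^j}(q)^3$. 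The compatibility of the level-$j$ and level-$(j-1)$ closed forms across scales is transparent through the identities $[n]\,[n^{r-1}]_{q^n}=[n^r]$ and $\Phi_{n^{j+1}}(q)\mid\Phi_{n^j}(q^n)$, which also clarify how the prefactor $q^{(1-n)/2}[n]$ propagates under $q\mapsto q^n$.

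The main obstacle, and the genuine novelty beyond the $r=1$ case, is twofold. First, the divergent normalisation $q^{-\binom{k+1}{2}}$ sharply restricts the summation formulas available at the boundary values $a=q^{\pm n^j}$, so the $a$-deformation must be chosen so as to be simultaneously $a\mapsto a^{-1}$-symmetric, correct at $a=1$, and summable after the truncation; securing the \emph{third} cyclotomic factor $\Phi_{n^j}(q)$---beyond the quadratic contribution of the microscope---is the delicate point and is where most of the analytic work lies. Second, one must check that the two closed-form evaluations at $a=q^{n^j}$ and $a=q^{-n^j}$ genuinely coincide with the corresponding specialisations of $q^{(1-n)/2}[n]\,L_{r-1}(a,q^n)$ for \emph{every} $j$ simultaneously, a uniform matching that is precisely the $q$-analogue of Dwork's multiplicative combinatorics of coefficients. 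Once these two points are settled, the coprimality of distinct cyclotomic polynomials assembles all the pieces into the asserted $q$-supercongruence, and the companion congruence \eqref{eq:q-div-WZ-2} follows by the same argument with the range $k\le n^r-1$ in place of $k\le(n^r-1)/2$.
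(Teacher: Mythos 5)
Your overall strategy---attach a parameter $a$, prove a congruence modulo a product of factors $(1-aq^N)(a-q^N)$ by closed-form evaluation at the roots, then let $a\to1$ and assemble cyclotomic factors by coprimality---is indeed the paper's strategy in outline. But two concrete choices in your proposal break the argument. First, your deformation
$L_r(a,q)$ keeps the denominator $(q;q)_k^2(q^2;q^2)_k$ undeformed, whereas the evaluation you need at the roots is the identity \eqref{lem-3-2} of Lemma~\ref{lem:3}, which requires the denominator to be deformed as well, to $(aq;q)_k(q/a;q)_k(q^2;q^2)_k$. With your summand the truncated sum at $a=q^{\pm N}$ has no known closed form and does not equal $q^{(1-N)/2}[N]$ (this already fails numerically for small odd $N$), so the matching of the two sides over the ideal $(1-aq^N)(a-q^N)$---the heart of the method---cannot be carried out. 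Second, your set of interpolation points $a=q^{\pm n^j}$, $j=1,\dots,r$, is both too small and partly illegal. Once the denominator carries $(aq;q)_k(q/a;q)_k$, the two sides of the parametric congruence are rational in $a$, and letting $a\to1$ one must pay for the denominator factor $(aq^2;q^2)_{(n^r-1)/2}(q^2/a;q^2)_{(n^r-1)/2}$, whose limit contributes $\Phi_{n^i}(q)^{n^{r-i}-1}$; your $2r$ roots supply only $\Phi_{n^i}(q)^{2(r-i+1)}$, which is smaller than $n^{r-i}-1+2$ already for $n\ge5$, $i<r$. This is why Theorem~\ref{main-2-par} interpolates at $a=q^{\pm(2j+1)n}$ for roughly $n^{r-1}/(2d)$ values of $j$ (giving multiplicity $n^{r-i}+1$ at $a=1$), and moreover restricts to $j\ge\lceil(n^{r-1}-1)/(2d)\rceil$ precisely so that $(2j+1)n>(n^r-1)/d$ and the deformed denominator $(q^{1\mp(2j+1)n};q)_k$ does not vanish inside the summation range; at your points $a=q^{\pm n^j}$ with $j<r$ it does vanish, so the left-hand side is not even well defined there.

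In short, the ``delicate point'' you flag---securing the third cyclotomic factor and making the root evaluations match uniformly---is exactly where the proposal is incomplete, and resolving it forces both a different $a$-deformation (Lemma~\ref{lem:3}) and a much larger, carefully positioned family of interpolation points (Theorem~\ref{main-2-par}). Your treatment of the divisors of $n^r$ that are not powers of $n$ is also heavier than necessary: the paper disposes of the entire modulus $[n^r]$ at once by applying the congruence \eqref{lem-3-1} with $n\mapsto n^r$ to the left-hand side and with $q\mapsto q^n$, $n\mapsto n^{r-1}$ to the right-hand side, rather than analysing blocks for each $\Phi_d(q)$ separately.
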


Although $q$-supercongruences serve here as a principal tool for proving their non-$q$-counterparts,
they have established themselves as an independent topic.
For some recent developments on $q$-supercongruences we refer the reader to the papers
\cite{Go19,Gu18a,Gu18b,Gu19c,Gu20a,Gu20b,Guo-mod4,Guo-rima,GPZ17,GS20a,GS20b,GS20c,GZ19a,GZ19b,LW,NP18,St19,Ta13,Zu19}.

Both hypergeometric identities and congruences for their truncations originate from their $q$-hypergeometric versions in a very natural way, through the asymptotics as $q\to1$ for the former and as $q$ approaches other roots of unity for the latter;
it is this asymptotic analysis at roots of unity, which we refer to as `$q$-microscopic'.
Notice that proving a congruence $A(q)\equiv B(q)\pmod{\Phi_N(q)}$ is equivalent to verifying that $A(\zeta)=B(\zeta)$ for all primitive $N$-th roots of unity~$\zeta$.
Furthermore, proofs of the congruences require `creative' introduction of extra (generic) parameter $a$ (and, possibly, some other);
those parameters are often (but not always!) suggested by general forms of the underlying $q$-hypergeometric identities.
The intermediate parametric supercongruences of the form $A(q,a)\equiv B(q,a)$ are verified to be true modulo polynomials $a-q^N$ and $1-aq^N$ (for particular choices of integers~$N$) by showing that $A(q,q^N)=B(q,q^N)$ and $A(q,q^{-N})=B(q,q^{-N})$; afterwards, the dependence on the parameter is eliminated via a careful analysis of degeneration as $a\to1$. A plain overview of the method can be found in~\cite{Zu20}.
Quite remarkably, the strategy of creative $q$-microscoping makes it possible to prove many congruences that are not accessible to other techniques.

The exposition below is organized as follows.
In Section~\ref{sec2} we provide detailed proofs of Theorems~\ref{main-1} and~\ref{main-2}.
The methodology set up in that section is further used in Section~\ref{sec3} to prove several other $q$-supercongruences whose limiting $q\to1$ cases correspond to Dwork-type supercongruences, occasionally conjectured in the existing literature.
Most of the results in Section~\ref{sec3} are supplied with sketches of their proofs.
Finally, in Section~\ref{sec4} we leave several open problems about $q$-congruences behind Dwork-type (super)congruences~\eqref{eq2} and discuss possible future of the $q$-setup.

In our proofs below we make use of transformation formulas of basic hypergeometric series \cite{GR04}
$$
_{s+1}\phi_{s}\biggl[\begin{matrix}
a_0, \, a_1, \, \, \dots, \, a_s \\[2.5pt]
b_1, \, b_2, \, \dots, \, b_s
\end{matrix}; q,\, z \biggr]
=\sum_{k=0}^{\infty}\frac{(a_0,a_1,\dots,a_s;q)_k\,z^k}{(q,b_1,\dots,b_s;q)_k},
$$
where the symbol $(a_0,a_1,\dots,a_s;q)_k$ is a shortcut for $\prod_{\ell=0}^s(a_\ell;q)_k$.

\section{Proof of the principal theorems}
\label{sec2}

\subsection{Proof of Theorem \ref{main-1}}
\label{sec2.1}

We shall make use of the following $q$-congruences, which are special cases of \cite[Theorem 1.4]{GZ19a}.

\begin{lemma}
\label{lem:1}
Let $n$ be a positive integer coprime with $6$. Then
\begin{align*}
\sum_{k=0}^{(n-1)/2}[8k+1]\frac{(aq,q/a;q^2)_k (q;q^2)_{2k}}{(aq^6,q^6/a;q^6)_k (q^2;q^2)_{2k}}q^{2k^2}
&\equiv 0\pmod{[n]}, \\
\sum_{k=0}^{n-1}[8k+1]\frac{(aq,q/a;q^2)_k (q;q^2)_{2k}}{(aq^6,q^6/a;q^6)_k (q^2;q^2)_{2k}}q^{2k^2}
&\equiv 0\pmod{[n]}.
\end{align*}
\end{lemma}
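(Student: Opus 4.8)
The plan is to read off both congruences as specializations of \cite[Theorem~1.4]{GZ19a}: the two displayed sums are the $(n-1)/2$- and $(n-1)$-truncations treated there, once the free parameters of that theorem are fixed so as to produce the well-poised numerator pair $(aq;q^2)_k(q/a;q^2)_k$ over the denominator $(aq^6;q^6)_k(q^6/a;q^6)_k$. The first thing I would do is pin down the parameter dictionary, verify that the normalizing factor $[8k+1]$ and the power $q^{2k^2}$ match on both sides, and observe that divisibility by $[n]$ is the weaker consequence one obtains by retaining only the factor $[n]$ of the full modulus established in that theorem.

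For a self-contained verification I would instead argue directly at roots of unity. Because $\gcd(n,6)=1$ we may write $[n]=\prod_{d\mid n,\;d>1}\Phi_d(q)$ with every such $d$ again coprime to~$6$, so it suffices to prove that each sum vanishes modulo $\Phi_d(q)$, i.e.\ evaluates to $0$ after the substitution $q=\zeta$, where $\zeta$ is a primitive $d$-th root of unity. The decisive feature is the free parameter $a$: the $a$-dependent denominators $(aq^6;q^6)_k$ and $(q^6/a;q^6)_k$ stay nonzero at $q=\zeta$ for generic~$a$, so each summand is a bona fide rational function of $a$ that may be evaluated at $q=\zeta$ with no spurious division by zero. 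One then has to show that the resulting finite sum vanishes, which I expect to come from a cancellation of the terms: for $k$ beyond a short initial range a numerator $q$-shifted factorial such as $(q;q^2)_{2k}$ acquires the factor $1-q^{d}$, so that (after accounting for any compensating zero in the denominator) those terms drop out, while the remaining terms pair off under a reflection of the summation index of the expected shape $k\mapsto(d-1)/2-k$ for the half-range sum, and a period-by-period reflection for the full sum.

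The main obstacle is precisely this pairwise cancellation: confirming that under the reflection the product of the several $q$-shifted-factorial quotients, multiplied by $[8k+1]$ and the Gaussian weight $q^{2k^2}$, goes over to the negative of the original term at $q=\zeta$, and that this happens uniformly enough to cover both the truncation at $(n-1)/2$ and the one at $n-1$. Careful bookkeeping of which factors vanish at $q=\zeta$ and which are regularized by the generic parameter $a$ is where the genuine effort lies; by contrast the factorization of $[n]$ and the reduction to a single primitive root of unity are routine.
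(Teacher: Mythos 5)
Your first paragraph is exactly what the paper does: Lemma~\ref{lem:1} is stated there with no proof beyond the remark that it is a special case of \cite[Theorem 1.4]{GZ19a}, whose congruence modulo $[n](1-aq^n)(a-q^n)$ has a right-hand side itself divisible by $[n]$, so that retaining only the factor $[n]$ of the modulus gives precisely the two displayed congruences. The supplementary root-of-unity argument you sketch is essentially how that earlier theorem is proved and is not needed here; as you yourself note, the cancellation bookkeeping (in particular for the truncation at $n-1$, where terms with $(n-1)/2<k\le n-1$ do \emph{not} all vanish modulo $\Phi_d(q)$ and must genuinely cancel) is the nontrivial content already carried out in \cite{GZ19a}.
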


We need the following $q$-series identity (see \cite[Lemma 3.1]{GZ19a}), which plays an important role in our proof of $r=1$ instances of \eqref{ram1a-r} and \eqref{ram1b-r}.

\begin{lemma}\label{lem:2}
Let $n$ be a positive odd integer. Then
\begin{equation}
\sum_{k=0}^{(n-1)/2}[8k+1]\frac{(q^{1-n},q^{1+n};q^2)_k (q;q^2)_{2k}}{(q^{6-n},q^{6+n};q^6)_k (q^2;q^2)_{2k}}q^{2k^2}
=q^{(1-n)/2}[n]\biggl(\frac{-3}{n}\biggr).
\label{eq:lem3.1}
\end{equation}
\end{lemma}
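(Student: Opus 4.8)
The plan is to evaluate the terminating sum on the left of \eqref{eq:lem3.1} in closed form. The first observation is that this sum is precisely the specialization $a=q^n$ of the left-hand side of the first $q$-congruence in Lemma~\ref{lem:1}: under that substitution $aq\mapsto q^{1+n}$, $q/a\mapsto q^{1-n}$, $aq^6\mapsto q^{6+n}$ and $q^6/a\mapsto q^{6-n}$, while the numerator factor $(q^{1-n};q^2)_k$ makes the summand vanish for $k>(n-1)/2$ when $n$ is odd. Thus \eqref{eq:lem3.1} is not a congruence but an exact identity in $q$ (note $(1-n)/2\in\mathbb Z$), and the goal is to show that the whole sum collapses to the single quantity $q^{(1-n)/2}[n]\bigl(\tfrac{-3}{n}\bigr)$.

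To choose the right tool I would first inspect the ratio of consecutive terms $t_k$. Setting $x=q^{2k}$ one computes
\[
\frac{t_{k+1}}{t_k}
=\frac{1-q^9x^4}{1-qx^4}\cdot\frac{(1-q^{1-n}x)(1-q^{1+n}x)(1-qx^2)(1-q^3x^2)}{(1-q^{6-n}x^3)(1-q^{6+n}x^3)(1-q^2x^2)(1-q^4x^2)}\,q^2x^2 .
\]
Here the quotient $(1-q^9x^4)/(1-qx^4)$ is exactly the very-well-poised factor $[8k+9]/[8k+1]$ in base $q^4$ with parameter $a=q$, whereas the remaining parameters live in the mismatched bases $q^2$ (numerator) and $q^6$ (denominator) and are accompanied by the Gaussian weight $q^{2k^2}$. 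This shows that the sum is not an ordinary ${}_{s+1}\phi_s$ but a transformed very-well-poised series, so that no single classical summation applies to it verbatim.

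Accordingly I would match the sum against one of the quadratic or cubic summation formulas of Gasper and Rahman \cite{GR04}: after the appropriate change of base the series becomes a terminating very-well-poised series summable in closed product form, for example through a $q$-analogue of Dixon's theorem or a terminating ${}_6\phi_5$ evaluation. The concluding step is then to simplify the resulting ratio of $q$-shifted factorials at the specialized arguments $q^{\pm n}$. Almost everything telescopes, leaving a single $q$-integer $[n]$ together with a power of $q$; the cancellation between the base-$q^2$ numerator products and the base-$q^6$ denominator products leaves a residual sign depending only on the residue of $n$ modulo $6$ (equivalently modulo $3$, as $n$ is coprime to $6$). A short check of the two cases $n\equiv\pm1\pmod6$ identifies this sign with $\bigl(\tfrac{-3}{n}\bigr)$, and accounting for the leftover powers of $q$ produces the prefactor $q^{(1-n)/2}$.

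I expect the main obstacle to be precisely the base mismatch: pinning down the quadratic–cubic transformation that simultaneously linearizes the base-$q^4$ very-well-poised factor, reconciles the base-$q^2$ and base-$q^6$ shifted factorials, and absorbs the weight $q^{2k^2}$ into a summable standard shape. Once that is achieved the simplification is routine, the only delicate point being the correct extraction of the character $\bigl(\tfrac{-3}{n}\bigr)$ from the residue computation. As an alternative that sidesteps the search for the right transformation, one could feed the summand to the $q$-analogue of Zeilberger's creative-telescoping algorithm and verify the evaluation \eqref{eq:lem3.1} by an explicit certificate; there the delicacy is merely transferred to the handling of the moving summation limit $(n-1)/2$ and of the character, both of which vary with $n\bmod6$.
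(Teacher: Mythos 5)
The paper does not actually prove Lemma~\ref{lem:2}: it is quoted verbatim from \cite[Lemma 3.1]{GZ19a}, where it is established by specializing a cubic summation formula of Gasper and Rahman from \cite[Section 3.8]{GR04}. Your term-ratio computation is correct, and your diagnosis --- a very-well-poised factor in base $q^4$ sitting against Pochhammer symbols in the mismatched bases $q^2$ and $q^6$, hence a quadratic/cubic phenomenon out of reach of $q$-Dixon or a terminating ${}_6\phi_5$ taken verbatim --- points at exactly the family of formulas that the cited proof uses. So the strategy is the right one.

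As written, however, the proposal is a plan rather than a proof. The decisive step --- exhibiting the specific summation formula, matching its parameters to the sum in \eqref{eq:lem3.1}, and verifying that the resulting product of $q$-shifted factorials collapses to $q^{(1-n)/2}[n]\bigl(\frac{-3}{n}\bigr)$ --- is precisely the part you defer (you name ``pinning down the quadratic--cubic transformation'' as the main obstacle), and neither fallback (a $q$-Zeilberger certificate with a moving bound $(n-1)/2$ and an answer depending on the residue of $n$) is carried out. There is also a small but real omission in the endgame you sketch: the lemma is stated for \emph{every} positive odd $n$, including $3\mid n$, in which case $\bigl(\frac{-3}{n}\bigr)=0$ and the sum must vanish identically; your case analysis only treats $n\equiv\pm1\pmod{6}$. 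That case is not vacuous --- in the proof of Theorem~\ref{main-1gen} the lemma is invoked with $n$ replaced by $(2j+1)n$, which can be divisible by $3$ --- so a complete proof has to account for it.
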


In order to prove Theorem \ref{main-1}, we need to establish the following parametric generalization.

\begin{theorem}
\label{main-1gen}
Let $n>1$ be an integer coprime with $6$ and let $r\ge 1$. Then, modulo
$$
[n^r]\prod_{j=0}^{(n^{r-1}-1)/d}(1-aq^{(2j+1)n})(a-q^{(2j+1)n}),
$$
we have
\begin{align}
&\sum_{k=0}^{(n^r-1)/d}[8k+1]\frac{(aq,q/a;q^2)_k (q;q^2)_{2k}}{(aq^6,q^6/a;q^6)_k (q^2;q^2)_{2k}}q^{2k^2} \notag\\
&\quad
\equiv q^{(1-n)/2}[n]\biggl(\frac{-3}{n}\biggr)
\sum_{k=0}^{(n^{r-1}-1)/d}[8k+1]_{q^n}
\frac{(aq^n,q^n/a;q^{2n})_k (q^n;q^{2n})_{2k}}{(aq^{6n},q^{6n}/a;q^{6n})_k (q^{2n};q^{2n})_{2k}}q^{2nk^2},  \label{eq:main-1-a}
\end{align}
where $d=1,2$.
\end{theorem}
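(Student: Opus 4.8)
The plan is to prove the parametric $q$-congruence \eqref{eq:main-1-a} by induction on $r$, exploiting the central feature of creative microscoping: the extra parameter $a$ replaces a power of a cyclotomic modulus by a product of \emph{linear} factors in $a$, each of which can be detected by specialising $a$ at its root. Write
\[
A_k(a)=[8k+1]\frac{(aq;q^2)_k (q/a;q^2)_k (q;q^2)_{2k}}{(aq^6;q^6)_k (q^6/a;q^6)_k (q^2;q^2)_{2k}}q^{2k^2},
\]
let $F_r(a)=\sum_{k=0}^{(n^r-1)/d}A_k(a)$ be the left-hand side of \eqref{eq:main-1-a}, write $A_k^{[n]}(a)$ and $F_r^{[n]}(a)$ for the images of $A_k(a)$ and $F_r(a)$ under $q\mapsto q^n$, and set $c=q^{(1-n)/2}[n]\bigl(\frac{-3}{n}\bigr)$. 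Putting $P_r(a)=\prod_{j=0}^{(n^{r-1}-1)/d}(1-aq^{(2j+1)n})(a-q^{(2j+1)n})$, the claim reads $F_r(a)\equiv c\,F_{r-1}^{[n]}(a)$ modulo $[n^r]P_r(a)$. Since $[n^r]\in\mathbb Z[q]$ while $P_r(a)$ is primitive in $a$ with content a power of $q$, these two factors share no irreducible divisor; I would thus prove the congruence separately modulo $[n^r]$ and modulo $P_r(a)$ and recombine by the Chinese Remainder Theorem.

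For the base case $r=1$ the right-hand side collapses to the single term $k=0$, namely $c$, and $P_1(a)=(1-aq^n)(a-q^n)$. Lemma~\ref{lem:1} gives $F_1(a)\equiv0$ modulo $[n]$, and $c$ is $[n]$ times a unit, so $[n]\mid F_1(a)-c$. At $a=q^{-n}$ the factor $(aq;q^2)_k=(q^{1-n};q^2)_k$ vanishes for $k\ge(n+1)/2$, truncating $F_1(q^{-n})$ to the range of \eqref{eq:lem3.1}, whose summand it then matches exactly; hence Lemma~\ref{lem:2} yields $F_1(q^{-n})=c$, and the symmetry $A_k(a)=A_k(1/a)$ gives $F_1(q^{n})=c$ too. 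Therefore $P_1(a)\mid F_1(a)-c$, and coprimality upgrades both divisibilities to the full modulus $[n]\,P_1(a)$, completing the base case.

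For the inductive step assume \eqref{eq:main-1-a} at level $r-1$. To show $P_r(a)\mid\Delta_r(a)$, where $\Delta_r(a):=F_r(a)-c\,F_{r-1}^{[n]}(a)$, I evaluate $\Delta_r$, as a rational function of $a$ regular at each node, at the roots $a=q^{\pm(2j+1)n}$, $0\le j\le(n^{r-1}-1)/d$; by $A_k(a)=A_k(1/a)$ it suffices to treat $a=q^{(2j+1)n}$. There $(q/a;q^2)_k$ annihilates the terms of $F_r$ with $k>((2j+1)n-1)/2$ and $(q^n/a;q^{2n})_k$ those of $F_{r-1}^{[n]}$ with $k>j$, so the node identity $\Delta_r(q^{(2j+1)n})=0$ becomes the finite evaluation
\[
\sum_{k=0}^{((2j+1)n-1)/2}A_k(q^{(2j+1)n})=c\sum_{k=0}^{j}A_k^{[n]}(q^{(2j+1)n}).
\]
For $j\le(n^{r-2}-1)/d$ these two truncations coincide with the corresponding truncations of $F_{r-1}$ and $F_{r-2}^{[n]}$ (each summand being independent of $r$), so the identity is precisely $\Delta_{r-1}(q^{(2j+1)n})=0$ and is furnished by the induction hypothesis. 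The genuinely new nodes are those with $(n^{r-2}-1)/d<j\le(n^{r-1}-1)/d$, for which $F_{r-2}^{[n]}$ no longer reaches; there a direct evaluation is needed, and I would collapse the left-hand truncated sum onto the right-hand one by a Watson- or Whipple-type ${}_{s+1}\phi_s$ transformation from \cite{GR04} specialised at $a=q^{(2j+1)n}$. Once all nodes are covered we obtain $P_r(a)\mid\Delta_r(a)$.

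It remains to prove $\Delta_r(a)\equiv0$ modulo $[n^r]$, which I would establish by reducing modulo each cyclotomic factor $\Phi_{n^s}(q)$, $1\le s\le r$, and the remaining cyclotomic divisors of $[n^r]$ when $n$ is composite: writing $k=n^s\ell+i$ and using that $[8k+1]$ and the pertinent $q$-Pochhammer ratios factorise modulo $\Phi_{n^s}(q)$ in the Lucas--Dwork manner, the level-$r$ sum degenerates into $c$ times the level-$(r-1)$ sum in base $q^n$, that is, into the induction hypothesis read modulo $\Phi_{n^s}(q)$. The main obstacle is the family of new node evaluations for $(n^{r-2}-1)/d<j\le(n^{r-1}-1)/d$: in contrast to the single closed form of the base case, their right-hand sides are themselves nontrivial truncated hypergeometric sums, so the transformation drawn from \cite{GR04} must collapse the left-hand side \emph{exactly} onto $c\,F_{r-1}^{[n]}(q^{(2j+1)n})$ rather than onto an uncontrolled remainder, and this must hold uniformly in $j$ and for both $d=1,2$. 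Securing that exact collapse is where the bulk of the technical work lies.
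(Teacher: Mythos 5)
Your overall decomposition (modulo $[n^r]$ and modulo the product of linear factors in $a$, then recombining by coprimality) matches the paper's, and your base case $r=1$ is fine. But the inductive step has a genuine gap exactly where you flag it: the evaluation of $\Delta_r$ at the ``new'' nodes $a=q^{\pm(2j+1)n}$ with $(n^{r-2}-1)/d<j\le(n^{r-1}-1)/d$ is never carried out; you only express the hope that some Watson/Whipple-type transformation will collapse the left truncation onto $c\,F_{r-1}^{[n]}(q^{(2j+1)n})$. Without that, the divisibility $P_r(a)\mid\Delta_r(a)$ is not established. The missing idea --- which also makes the induction on $r$ unnecessary --- is that Lemma~\ref{lem:2} holds for \emph{every} positive odd integer, so it may be applied with $n$ replaced by $(2j+1)n$: at $a=q^{-(2j+1)n}$ the left-hand side of \eqref{eq:main-1-a} truncates to the sum in \eqref{eq:lem3.1} for the odd integer $(2j+1)n$ and hence equals the closed form $q^{(1-(2j+1)n)/2}[(2j+1)n]\bigl(\frac{-3}{(2j+1)n}\bigr)$; likewise the inner sum on the right-hand side is exactly the instance of \eqref{eq:lem3.1} with $q\mapsto q^n$ and $n\mapsto 2j+1$, so it equals $q^{-jn}[2j+1]_{q^n}\bigl(\frac{-3}{2j+1}\bigr)$, and multiplying by $q^{(1-n)/2}[n]\bigl(\frac{-3}{n}\bigr)$ reproduces the same closed form because $[n][2j+1]_{q^n}=[(2j+1)n]$ and the Kronecker symbol is multiplicative. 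This evaluates both sides at every node $0\le j\le(n^{r-1}-1)/d$ uniformly, with no case split between old and new nodes and no appeal to an induction hypothesis.

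A second, smaller defect: for the congruence modulo $[n^r]$ you propose a Lucas--Dwork style degeneration modulo each $\Phi_{n^s}(q)$, which you likewise do not justify and which is not needed. The short route already available to you is the paper's: Lemma~\ref{lem:1} with $n\mapsto n^r$ shows the left-hand side is $\equiv 0\pmod{[n^r]}$, while Lemma~\ref{lem:1} with $q\mapsto q^n$ and $n\mapsto n^{r-1}$ shows the inner sum on the right is $\equiv 0\pmod{[n^{r-1}]_{q^n}}$; together with the prefactor $[n]$, and the observation that $[n]$ is coprime to the denominators $(q^{2n};q^{2n})_{2k}$ occurring there, this gives $\equiv 0\pmod{[n][n^{r-1}]_{q^n}}=[n^r]$ for the right-hand side. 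Thus both sides vanish separately modulo $[n^r]$ and no comparison between them is required for that half of the modulus.
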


\begin{proof}
By Lemma~\ref{lem:1} with $n$ replaced by $n^r$, we see that
the left-hand side of \eqref{eq:main-1-a} is congruent to $0$
modulo $[n^r]$.  On the other hand, replacing $n$ by $n^{r-1}$ and $q$ by $q^n$ in Lemma~\ref{lem:1}, we conclude that
the summation on the right-hand side of \eqref{eq:main-1-a} is congruent to $0$ modulo $[n^{r-1}]_{q^n}$.
Furthermore, since $n$ is odd, it is easily seen that the $q$-integer $[n]$ is relatively prime to $1+q^k$ for any positive integer $k$,
and so it is also relatively prime to the denominators of the sum on the right-hand side of \eqref{eq:main-1-a} because
$$
\frac{(q^n;q^{2n})_{2k}}{(q^{2n};q^{2n})_{2k}}={4k\brack 2k}_{q^n}\frac{1}{(-q^n;q^n)_{2k}^2},
$$
where ${2k\brack k}_{q^n}=(q^n;q^n)_{2k}/(q^n;q^n)_k^2$ denotes the central $q$-binomial coefficient.
This proves that the right-hand side of \eqref{eq:main-1-a} is congruent to $0$ modulo
$[n][n^{r-1}]_{q^n}=[n^r]$; hence the $q$-congruence \eqref{eq:main-1-a} is true modulo $[n^r]$.

To show it also holds modulo
\begin{align}
\prod_{j=0}^{(n^{r-1}-1)/d}(1-aq^{(2j+1)n})(a-q^{(2j+1)n}), \label{eq:prod}
\end{align}
we only need to prove that both sides of \eqref{eq:main-1-a} are identical when we take $a=q^{-(2j+1)n}$ or $a=q^{(2j+1)n}$
for any $j$ with $0\le j\le (n^{r-1}-1)/d$, that is,
\begin{align}
&\sum_{k=0}^{(n^r-1)/d}[8k+1]\frac{(q^{1-(2j+1)n},q^{1+(2j+1)n};q^2)_k (q;q^2)_{2k}}
{(q^{6-(2j+1)n},q^{6+(2j+1)n};q^6)_k (q^2;q^2)_{2k}}q^{2k^2} \notag\\
&\qquad
= q^{(1-n)/2}[n]\biggl(\frac{-3}{n}\biggr) \sum_{k=0}^{(n^{r-1}-1)/d}[8k+1]_{q^n}
\frac{(q^{-2jn},q^{(2j+2)n};q^{2n})_k (q^n;q^{2n})_{2k}}
{(q^{(5-2j)n},q^{(2j+7)n};q^{6n})_k (q^{2n};q^{2n})_{2k}}q^{2nk^2}.  \label{eq:main-a-n}
\end{align}
It is easy to see that $(n^r-1)/d\ge ((2j+1)n-1)/2$ for $0\le j\le (n^{r-1}-1)/d$,
and $(q^{1-(2j+1)n};q^2)_k=0$ for $k>((2j+1)n-1)/2$.
By Lemma \ref{lem:2} the left-hand side of \eqref{eq:main-a-n} is equal to
$$
q^{(1-(2j+1)n)/2}[(2j+1)n]\bigg(\frac{-3}{(2j+1)n}\bigg).
$$
Likewise, the right-hand side of \eqref{eq:main-a-n} is equal to
$$
q^{(1-n)/2}[n]\biggl(\frac{-3}{n}\biggr) \cdot q^{-jn}[2j+1]_{q^n}\biggl(\frac{-3}{2j+1}\biggr)
=q^{(1-(2j+1)n)/2}[(2j+1)n]\biggl(\frac{-3}{(2j+1)n}\biggr).
$$
This proves \eqref{eq:main-a-n}. Namely, the $q$-congruence \eqref{eq:main-1-a} holds modulo \eqref{eq:prod}.
Since $[n^r]$ and \eqref{eq:prod}  are relatively prime polynomials, the proof of \eqref{eq:main-1-a} is complete.
\end{proof}

\begin{proof}[Proof of Theorem {\rm\ref{main-1}}]
It is not hard to see that the limit of \eqref{eq:prod} as $a\to 1$ has the factor
\begin{align*}
\begin{cases}
\prod_{j=1}^r\Phi_{n^j}(q)^{2n^{r-j}} &\text{if $d=1$,}\\[5pt]
\prod_{j=1}^r\Phi_{n^j}(q)^{n^{r-j}+1} &\text{if $d=2$.}
\end{cases}
\end{align*}
Note that the denominator of the left-hand side of \eqref{eq:main-1-a} is a multiple of that of the right-hand side of \eqref{eq:main-1-a}.
Since $\gcd(n,6)=1$, the factor related to $a$ of the former is
$$
(aq^6;q^6)_{(n^r-1)/d} (q^6/a;q^6)_{(n^r-1)/d},
$$
whose limit as $a\to 1$ only has the factor
\begin{align*}
\begin{cases}
\prod_{j=1}^r\Phi_{n^j}(q)^{2n^{r-j}-2} &\text{if $d=1$,}\\[5pt]
\prod_{j=1}^r\Phi_{n^j}(q)^{n^{r-j}-1} &\text{if $d=2$,}
\end{cases}
\end{align*}
related to $\Phi_n(q),\Phi_{n^2}(q),\ldots,\Phi_{n^r}(q)$. Hence, letting $a\to 1$ in \eqref{eq:main-1-a} we conclude that
\eqref{q4a} is true modulo $\prod_{j=1}^r\Phi_{n^j}(q)^3$, where one product $\prod_{j=1}^r\Phi_{n^j}(q)$ comes from $[n^r]$.

Finally, by \cite[Theorem 1.1]{GZ19a}  we obtain
\begin{align*}
\sum_{k=0}^{(n-1)/d}[8k+1]\frac{(q;q^2)_k^2 (q;q^2)_{2k}}{(q^2;q^2)_{2k}(q^6;q^6)_k^2}q^{2k^2}
\equiv 0 \pmod{[n]} \quad\text{for $d=1,2$}.
\end{align*}
Replacing $n$ by $n^r$ in the above congruences, we deduce that the left-hand sides of \eqref{q4a} and \eqref{q4b}
are congruent to $0$ modulo $[n^r]$, while letting $q\mapsto q^n$ and $n\mapsto n^{r-1}$ in the above congruences,
we see that the right-hand sides of them are congruent to 0 modulo $[n][n^{r-1}]_{q^n}=[n^r]$ as well.
This means that the $q$-congruences \eqref{q4a} and \eqref{q4b} hold modulo $[n^r]$.
The proof then immediately follows from the fact that the least common multiple of $\prod_{j=1}^r\Phi_{n^j}(q)^3$ and $[n^r]$
is just $[n^r]\prod_{j=1}^r\Phi_{n^j}(q)^2$.
\end{proof}

\subsection{Proof of Theorem \ref{main-2}}
\label{sec2.2}
Similarly to what we did above, we need the following $q$-congruence and $q$-identity; they follow from the $b\to 0$ case of \cite[Theorem 4.8]{GZ19a}.

\begin{lemma}\label{lem:3}
Let $n$ be a positive odd integer, and $d=1$ or $2$. Then
\begin{align}
\sum_{k=0}^{(n-1)/d}[3k+1]\frac{(aq,q/a;q^2)_k (q;q^2)_k}{(aq,q/a;q)_k (q^2;q^2)_k} q^{-{k+1\choose 2} }
&\equiv 0\pmod{[n]}, \label{lem-3-1} \\
\sum_{k=0}^{(n-1)/2}[3k+1]\frac{(q^{1-n},q^{1+n};q^2)_k (q;q^2)_{k}}{(q^{1-n},q^{1+n};q)_k (q^2;q^2)_{k}}q^{-{k+1\choose 2}}
&=q^{(1-n)/2}[n].  \label{lem-3-2}
\end{align}
\end{lemma}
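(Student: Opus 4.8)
The plan is to obtain both assertions of Lemma~\ref{lem:3} as the $b\to0$ degeneration of the two-parameter master $q$-congruence \cite[Theorem 4.8]{GZ19a}, in exactly the spirit in which Lemmas~\ref{lem:1} and~\ref{lem:2} were read off from the doubly-parametric identities behind Theorem~\ref{main-1gen}. That master result keeps, besides the parameter $a$ already visible in \eqref{lem-3-1}, a second free parameter $b$ occurring through $q$-shifted factorials of the form $(bq;q^2)_k$ and $(q/b;q)_k$ together with their reciprocals; sending $b\to0$ should collapse its summand onto the summands displayed in \eqref{lem-3-1} and \eqref{lem-3-2}.

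First I would write out \cite[Theorem 4.8]{GZ19a} with $a$ kept generic and the $b$-dependence exhibited explicitly, separating its congruence half (valid modulo $[n]$) from its evaluation half. Because $[n]$ is a polynomial in $q$ alone and does not involve $b$, the modulus is unaffected by the limit; hence \eqref{lem-3-1} will follow as soon as the $b\to0$ limit of the summand is shown to exist and to coincide with the summand in \eqref{lem-3-1}.

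The delicate point, and the step I expect to be the main obstacle, is that this limit is singular term by term: factors such as $(q/b;q^2)_k$ and $(q/b;q)_k$ diverge like $b^{-k}$. A direct computation gives the leading behaviour $(q/b;q^2)_k\sim(-1)^kb^{-k}q^{k^2}$ and $(q/b;q)_k\sim(-1)^kb^{-k}q^{\binom{k+1}{2}}$ as $b\to0$, while $(bq;q^2)_k$ and $(bq;q)_k$ tend to $1$. Since the divergent factors enter the master summand in reciprocal pairs, the $(-1)^kb^{-k}$ prefactors cancel and only a residual power of $q$ survives; the crux is to bookkeep these exponents accurately enough that they assemble into the normalization $q^{-\binom{k+1}{2}}$ appearing in \eqref{lem-3-1} and \eqref{lem-3-2}, a single miscount in $\binom{k+1}{2}$ being enough to spoil both formulas.

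Finally, for the evaluation \eqref{lem-3-2} I would, after passing to the limit, specialize $a=q^{-n}$ (equivalently $a=q^{n}$, by the $a\leftrightarrow1/a$ symmetry of the summand). This makes $(q^{1-n};q^2)_k$ vanish for $k>(n-1)/2$, which both truncates the sum at $(n-1)/2$ and upgrades the congruence to an exact terminating identity. The closed form $q^{(1-n)/2}[n]$ is then inherited from the evaluation half of \cite[Theorem 4.8]{GZ19a} under the same $b\to0$ and $a=q^{-n}$ specializations; alternatively, the truncated series is a terminating very-well-poised $q$-hypergeometric sum that can be evaluated directly by a classical $q$-summation. Combining the two halves yields Lemma~\ref{lem:3}.
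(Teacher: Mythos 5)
Your proposal is correct and follows essentially the same route as the paper, which derives both \eqref{lem-3-1} and \eqref{lem-3-2} precisely as the $b\to0$ case of \cite[Theorem 4.8]{GZ19a}, with the exact evaluation \eqref{lem-3-2} obtained from the specialization $a=q^{\pm n}$ that turns the congruence modulo $(1-aq^n)(a-q^n)$ into an identity. The careful bookkeeping of the singular $b\to0$ asymptotics that you flag is the only detail the paper leaves implicit.
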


For a real number $x$, we use the standard notation $\lfloor x\rfloor$ and $\lceil x\rceil$ for the floor (integer part) and ceiling functions;
these integers satisfy $\lfloor x\rfloor\le x\le\lceil x\rceil$.
We have the following parametric generalization of Theorem~\ref{main-2}.

\begin{theorem}\label{main-2-par}
Let $n>1$ be an odd integer and $r\ge 1$. Then, modulo
$$
[n^r]\prod_{j=\lceil (n^{r-1}-1)/(2d)\rceil}^{(n^{r-1}-1)/d}(1-aq^{(2j+1)n})(a-q^{(2j+1)n}),
$$
we have
\begin{align}
&\sum_{k=0}^{(n^r-1)/d}[3k+1]\frac{(aq,q/a;q^2)_k (q;q^2)_k}{(aq,q/a;q)_k (q^2;q^2)_k} q^{-{k+1\choose 2}} \notag\\
&\qquad
\equiv q^{(1-n)/2}[n]
\sum_{k=0}^{(n^{r-1}-1)/d}[3k+1]_{q^n}
\frac{(aq^n,q^n/a;q^{2n})_k (q^n;q^{2n})_{k}}{(aq^n,q^n/a;q^n)_k (q^{2n};q^{2n})_{k}}q^{-nk{k+1\choose 2}},  \label{eq:main-2-a}
\end{align}
where $d=1,2$.
\end{theorem}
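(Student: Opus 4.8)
The plan is to mirror the proof of Theorem~\ref{main-1gen} line by line, with Lemma~\ref{lem:3} now playing the combined role of Lemmas~\ref{lem:1} and~\ref{lem:2}: the congruence~\eqref{lem-3-1} delivers divisibility by $[n^r]$, while the closed form~\eqref{lem-3-2} drives the creative-microscoping step. Denote by $L(a)$ and $R(a)$ the two sides of~\eqref{eq:main-2-a}. First I would prove $L(a)\equiv R(a)\equiv0\pmod{[n^r]}$, then prove $L(a)\equiv R(a)$ modulo the displayed $a$-product; since $[n^r]$ and that product are coprime, their product then divides $L(a)-R(a)$, which is precisely the assertion.

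For the $[n^r]$ step with $d=1$, I would invoke~\eqref{lem-3-1} with $n$ replaced by $n^r$ to get $L(a)\equiv0\pmod{[n^r]}$, and invoke it with $q\mapsto q^n$, $n\mapsto n^{r-1}$ to get the right-hand sum $\equiv0\pmod{[n^{r-1}]_{q^n}}$; because $n$ is odd, $[n]$ is coprime to every $1+q^k$ and hence to the denominators on the right, so $R(a)\equiv0\pmod{[n][n^{r-1}]_{q^n}}=[n^r]$. The case $d=2$ needs one extra remark: at a primitive $e$-th root of unity $q=\zeta$ with $e\mid n^r$, $e>1$ (so $e$ is odd), the factor $(q;q^2)_k=\prod_{i=1}^k(1-q^{2i-1})$ already vanishes for $k\ge(e+1)/2$. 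Hence every term with $k>(e-1)/2$ in the full sum of~\eqref{lem-3-1} is zero at $q=\zeta$, so the half-sum truncated at $(n^r-1)/2\ge(e-1)/2$ agrees with the full sum modulo $\Phi_e(q)$; letting $e$ range over the divisors of $n^r$ gives $L(a)\equiv0\pmod{[n^r]}$ for $d=2$ as well, and the same for $R(a)$.

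The core of the argument is the product step, where for each admissible $j$ I would substitute $a=q^{-(2j+1)n}$ and $a=q^{(2j+1)n}$ and verify that $L(a)=R(a)$ as an identity. At $a=q^{-(2j+1)n}$ the left-hand numerator $(aq;q^2)_k=(q^{1-(2j+1)n};q^2)_k$ makes the summand vanish for $((2j+1)n-1)/2<k<(2j+1)n$, so whenever $(n^r-1)/d$ lands in the interval $[((2j+1)n-1)/2,(2j+1)n-1]$ the left sum collapses to the one evaluated by~\eqref{lem-3-2} with $n\mapsto(2j+1)n$, namely $q^{(1-(2j+1)n)/2}[(2j+1)n]$. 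On the right, $(aq^n;q^{2n})_k=(q^{-2jn};q^{2n})_k$ kills the summand for $j<k\le 2j$, so whenever $(n^{r-1}-1)/d$ lands in $[j,2j]$ the right sum collapses to~\eqref{lem-3-2} with $q\mapsto q^n$, $n\mapsto 2j+1$, namely $q^{(1-n)/2}[n]\cdot q^{-jn}[2j+1]_{q^n}$. The two evaluations coincide via the factorization $[(2j+1)n]=[2j+1]_{q^n}[n]$ and the matching exponent $(1-(2j+1)n)/2=(1-n)/2-jn$, with no Kronecker symbol to track (unlike Theorem~\ref{main-1gen}); the substitution $a=q^{(2j+1)n}$ then follows by the $a\leftrightarrow1/a$ symmetry of the summands.

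The hard part, and the reason the product runs from $j=\lceil(n^{r-1}-1)/(2d)\rceil$ rather than from $j=0$, is that Theorem~\ref{main-2-par} carries the single-$q$ denominators $(aq;q)_k(q/a;q)_k$ and $(aq^n;q^n)_k(q^n/a;q^n)_k$ in place of the $q^6$- and $q^{6n}$-denominators of Theorem~\ref{main-1gen}. These denominators vanish only \emph{half} as far out as the $q^2$-Pochhammer numerators, so beyond $k=(2j+1)n$ on the left and $k=2j+1$ on the right the vanishing numerator factor is cancelled by a vanishing denominator factor and the term \emph{survives} rather than dropping out. Thus~\eqref{lem-3-2} evaluates the sums only while the truncation stays inside the safe windows $[((2j+1)n-1)/2,(2j+1)n-1]$ and $[j,2j]$; the requirement that $(n^r-1)/d$ and $(n^{r-1}-1)/d$ both lie in these windows turns out to yield one and the same constraint $\lceil(n^{r-1}-1)/(2d)\rceil\le j\le(n^{r-1}-1)/d$. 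I expect the real bookkeeping to be confirming this coincidence of windows for both $d=1,2$, and then, when passing to Theorem~\ref{main-2} by letting $a\to1$, checking that the shorter product still contributes enough factors $\Phi_{n^j}(q)$ to secure the modulus $[n^r]\prod_{j=1}^r\Phi_{n^j}(q)^2$.
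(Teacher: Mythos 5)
Your proposal follows essentially the same route as the paper's proof: divisibility by $[n^r]$ via Lemma~\ref{lem:3}, evaluation of both sides at $a=q^{\pm(2j+1)n}$ via the identity \eqref{lem-3-2} after checking that the truncation points lie in the windows where the vanishing numerator factors are not cancelled by vanishing denominator factors, and coprimality of $[n^r]$ with the $a$-product. Your window bookkeeping is exactly the paper's pair of inequalities $(n^r-1)/d\ge((2j+1)n-1)/2$ and $(2j+1)n>(n^r-1)/d$ (and their right-hand analogues), so the argument is correct and essentially identical, with your proposal even spelling out the $d=2$ reduction that the paper leaves implicit.
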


\begin{proof}
Replacing $n$ by $n^r$ in \eqref{lem-3-1}, we see that
the left-hand side of \eqref{eq:main-2-a} is congruent to $0$
modulo $[n^r]$.  Moreover, replacing $n$ by $n^{r-1}$ and $q$ by $q^n$ in \eqref{lem-3-1} means that
the right-hand side of \eqref{eq:main-2-a} is congruent to $0$ modulo $[n][n^{r-1}]_{q^n}=[n^r]$. That is, the $q$-congruence \eqref{eq:main-2-a} holds modulo $[n^r]$.

To prove it is also true modulo
\begin{align}
\prod_{j=\lceil (n^{r-1}-1)/(2d)\rceil}^{(n^{r-1}-1)/d}(1-aq^{(2j+1)n})(a-q^{(2j+1)n}),\label{eq:prod-2}
\end{align}
it suffices to show that both sides of \eqref{eq:main-2-a} are equal for all $a=q^{-(2j+1)n}$ and $a=q^{(2j+1)n}$
with $(n^{r-1}-1)/(2d)\le j\le (n^{r-1}-1)/d$, i.e.,
\begin{align}
&\sum_{k=0}^{(n^r-1)/d}[3k+1]\frac{(q^{1-(2j+1)n},q^{1+(2j+1)n};q^2)_k (q;q^2)_{k}}{(q^{1-(2j+1)n},q^{1+(2j+1)n};q)_k (q^2;q^2)_{k}}q^{-{k+1\choose 2}} \notag\\
&\qquad
= q^{(1-n)/2} [n] \sum_{k=0}^{(n^{r-1}-1)/d}[3k+1]_{q^n}
\frac{(q^{-2jn},q^{(2j+2)n};q^{2n})_k (q^n;q^{2n})_{k}}{(q^{-2jn},q^{(2j+2)n};q^n)_k (q^{2n};q^{2n})_{k}}q^{-n{k+1\choose 2}}.  \label{eq:main-b-n}
\end{align}
It is easy to see that $(n^r-1)/d \ge ((2j+1)n-1)/2 $ and
$(2j+1)n >  (n^r-1)/d $
for $\lceil( n^{r-1}-1)/2d\rceil\le j\le (n^{r-1}-1)/d$.
Hence, the left-hand side of \eqref{eq:main-b-n} is well-defined (the denominator is non-zero) and is equal to
\begin{align*}
&\sum_{k=0}^{((2j+1)n-1)/2}[3k+1]\frac{(q^{1-(2j+1)n},q^{1+(2j+1)n};q^2)_k (q;q^2)_{k}}{(q^{1-(2j+1)n},q^{1+(2j+1)n};q)_k (q^2;q^2)_{k}}q^{-{k+1\choose 2}} \\
&\qquad
=q^{(1-(2j+1)n)/2}[(2j+1)n]
\end{align*}
by \eqref{lem-3-2}. Similarly, the right-hand side of \eqref{eq:main-b-n} is equal to
$$
q^{(1-n)/2}[n] \cdot q^{-jn}[2j+1]_{q^n}=q^{(1-(2j+1)n)/2}[(2j+1)n],
$$
and so the identity \eqref{eq:main-b-n} holds. Namely, the $q$-congruence \eqref{eq:main-2-a} is true modulo \eqref{eq:prod-2}.
This completes the proof of \eqref{eq:main-2-a}.
\end{proof}

\begin{proof}[Proof of Theorem {\rm\ref{main-2}}]
This time the limit of \eqref{eq:prod-2} as $a\to 1$ has the factor
\begin{align*}
\begin{cases}
\prod_{j=1}^r\Phi_{n^j}(q)^{n^{r-j}+1} &\text{if $d=1$,}\\[5pt]
\prod_{j=1}^r\Phi_{n^j}(q)^{n^{r-j}+1-2\lfloor(n^{r-j}+1)/4\rfloor} &\text{if $d=2$,}
\end{cases}
\end{align*}
where in the $d=2$ case we use the fact the set $\{(2j+1)n:j=0,\ldots, \lfloor(n^{r-1}-3)/4\rfloor\}$
contains exactly $\lfloor(n^{r-j}+1)/4\rfloor$ multiples of $n^j$ for $j=1,\dots,r$.

On the other hand, the denominator of (the reduced form of) the left-hand side of \eqref{eq:main-2-a} is a multiple of that of the right-hand side of \eqref{eq:main-2-a}.
The factor related to $a$ of the denominator is
$$
\begin{cases}
\dfrac{(aq,q/a;q)_{n^r-1}}{(aq,q/a;q^2)_{(n^r-1)/2}}
=(aq^2,q^2/a;q^2)_{(n^r-1)/2} &\text{if $d=1$,}\\[10pt]
\dfrac{(aq,q/a;q)_{(n^r-1)/2}}{(aq,q/a;q^2)_{\lceil(n^r-1)/4\rceil}}
=(aq^2,q^2/a;q^2)_{\lfloor(n^r-1)/4\rfloor} &\text{if $d=2$}.
\end{cases}
$$
Its limit as $a\to 1$ only has the following factor
\begin{align*}
\begin{cases}
\prod_{j=1}^r\Phi_{n^j}(q)^{n^{r-j}-1} &\text{if $d=1$,}\\[5pt]
\prod_{j=1}^r\Phi_{n^j}(q)^{2\lfloor(n^{r-j}-1)/4\rfloor} &\text{if $d=2$,}
\end{cases}
\end{align*}
related to $\Phi_n(q),\Phi_{n^2}(q),\ldots,\Phi_{n^r}(q)$.
Therefore, setting $a\to 1$ in \eqref{eq:main-2-a}, we conclude that
\eqref{eq:q-div-WZ-2} holds modulo $\prod_{j=1}^r\Phi_{n^j}(q)^3$, where one product $\prod_{j=1}^r\Phi_{n^j}(q)$ is from $[n^r]$.

Finally, along the lines of proof of Theorem \ref{main-1}, using the following $q$-congruences from \cite{Gu20a}:
\begin{align*}
\sum_{k=0}^{(n-1)/d}[3k+1]\frac{(q;q^2)_k^3 q^{-{k+1\choose 2} } }{(q;q)_k^2 (q^2;q^2)_k}
\equiv 0 \pmod{[n]} \quad\text{for $d=1,2$},
\end{align*}
we can prove that the $q$-congruences \eqref{eq:q-div-WZ-1} and \eqref{eq:q-div-WZ-2} hold modulo $[n^r]$, thus
completing the proof of the theorem.
\end{proof}

\section{More Dwork-type $q$-congruences}
\label{sec3}

Throughout this section, $p$ always denotes an odd prime.
Below we give $q$-analogues of some known or conjectural Dwork-type congruences.
In particular, we completely confirm the supercongruence conjectures (B.3), (L.3) of Swisher \cite{Sw15}
and also confirm the first cases of her conjectures (E.3) and (F.3).

\subsection{Another $q$-analogue of \eqref{3k+1-a} and \eqref{3k+1-b}}
From \cite{Gu19c,GZ20} we see that supercongruences may have different $q$-analogues.
Here we show that the supercongruences \eqref{3k+1-a} and \eqref{3k+1-b} fall into this category
and possess $q$-analogues different from those presented in Theorem~\ref{main-2}.

\begin{theorem}\label{main-new}
Let $n>1$ be odd and let $r\ge 1$. Then, modulo $[n^r]\prod_{j=1}^r\Phi_{n^j}(q)^2$,
\begin{align}
\sum_{k=0}^{(n^r-1)/d}[3k+1]\frac{(q;q^2)_k^3 (-1;q)_k q^k }{(q;q)_k^3 (-q^2;q)_{2k}}
\equiv \frac{1+q}{1+q^n}[n]\sum_{k=0}^{(n^{r-1}-1)/2}[3k+1]_q\frac{(q^n;q^{2n})_k^3(-1;q^n)_k q^{nk} }{(q^n;q^n)_k^3 (-q^{2n};q^{n})_{2k}}, \label{eq:q-div-new-1}
\end{align}
where $d=1,2$.
\end{theorem}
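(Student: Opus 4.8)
The plan is to mirror the two-step strategy used for Theorems~\ref{main-1} and~\ref{main-2}: first introduce a parametric version of \eqref{eq:q-div-new-1} containing a free parameter $a$, prove it as an exact polynomial congruence modulo $[n^r]$ and modulo a product of linear factors in $a$, and then recover the stated $q$-supercongruence by specializing $a\to1$. Concretely, I would write down a generalization in which the summand
$$
\frac{(q;q^2)_k^3 (-1;q)_k q^k}{(q;q)_k^3 (-q^2;q)_{2k}}
$$
is deformed into something like
$$
\frac{(aq;q^2)_k (q/a;q^2)_k (q;q^2)_k (-1;q)_k q^k}{(aq;q)_k (q/a;q)_k (q;q)_k (-q^2;q)_{2k}},
$$
so that at $a=1$ one recovers the left-hand side of \eqref{eq:q-div-new-1}, and similarly for the right-hand side with $q\mapsto q^n$. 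The modulus on the $a$-side should be a product $\prod_j(1-aq^{(2j+1)n})(a-q^{(2j+1)n})$ over an appropriate range of $j$, exactly as in \eqref{eq:prod} and \eqref{eq:prod-2}.

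First I would establish the $[n^r]$-divisibility. I expect that the needed ingredient is a companion $q$-congruence and a companion $q$-identity for this deformed series, analogous to Lemmas~\ref{lem:1}--\ref{lem:2} and to Lemma~\ref{lem:3}; these should again be available as specializations of a theorem from \cite{GZ19a} (the relevant $_6\phi_5$-type summation), or else derivable from the $q$-congruences of \cite{Gu20a} that were already invoked at the end of the proof of Theorem~\ref{main-2}. Granting such a lemma, the left-hand side of the parametric identity vanishes modulo $[n^r]$ after replacing $n$ by $n^r$, and the right-hand side vanishes modulo $[n][n^{r-1}]_{q^n}=[n^r]$ after the substitution $q\mapsto q^n$, $r\mapsto r-1$; the coprimality of $[n]$ with the factors $1+q^{nk}$ appearing in $(-q^{2n};q^n)_{2k}$ guarantees that the denominators do not interfere, just as in the proof of Theorem~\ref{main-1gen}.

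Next I would prove the congruence modulo the $a$-product by checking that the two sides become literally equal at each root $a=q^{\pm(2j+1)n}$. At such a specialization the deformed $q$-Pochhammer factors terminate the left sum early (the factor $(q^{1-(2j+1)n};q^2)_k$ kills all terms with $k$ beyond $((2j+1)n-1)/2$), and I would evaluate both sides in closed form using the companion $q$-identity, expecting both to collapse to the same expression of the shape $q^{(1-(2j+1)n)/2}[(2j+1)n]$ (possibly with a Kronecker-type prefactor, which here should be trivial as in Theorem~\ref{main-2}). Since $[n^r]$ and the $a$-product are coprime polynomials, the two divisibilities combine to give the parametric identity. Finally, taking $a\to1$ and tracking the cyclotomic content of the $a$-dependent denominator factor\,---\,again a bookkeeping of exponents of $\Phi_{n^j}(q)$ along the lines of the floor/ceiling computation in the proof of Theorem~\ref{main-2}\,---\,upgrades the modulus from $[n^r]$ to $[n^r]\prod_{j=1}^r\Phi_{n^j}(q)^2$, which is the claimed statement.

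The main obstacle I anticipate is locating (or proving) the correct companion identity and termination lemma for \emph{this particular} deformation, since the summand here mixes the $(-1;q)_k$ and $(-q^2;q)_{2k}$ factors rather than the purely $(q;q^2)$-type factors of Theorem~\ref{main-2}; getting the parametrization right so that the specializations $a=q^{\pm(2j+1)n}$ actually produce an evaluable (terminating, balanced) basic hypergeometric sum is the delicate point. A secondary subtlety is the precise exponent count for $\Phi_{n^j}(q)$ coming from the $(-q^2;q)_{2k}$-type denominator, which does not appear in the earlier proofs and may shift the range of $j$ in the $a$-product; I would fix that range precisely by demanding that the surviving cyclotomic factors from the $a$-product, together with the one copy from $[n^r]$, supply exactly the cube $\Phi_{n^j}(q)^3$ before dividing out the one copy absorbed into the least common multiple with $[n^r]$.
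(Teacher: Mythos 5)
Your proposal follows the paper's proof essentially verbatim: the paper deforms the summand with exactly the parameter placement you guess, obtains the key congruence-plus-evaluation from the $b=-1$ case of \cite[Theorem 4.8]{GZ19a}, proves the parametric congruence modulo $[n^r]$ times the restricted product $\prod_{j=\lceil(n^{r-1}-1)/(2d)\rceil}^{(n^{r-1}-1)/d}(1-aq^{(2j+1)n})(a-q^{(2j+1)n})$ (the range of \eqref{eq:prod-2}, forced by well-definedness of the denominators, just as you anticipate), and then lets $a\to1$, handling the residual $[n^r]$ divisibility via the $a=1$ case of the base congruence. The only cosmetic deviation is that the common value of the two sides at $a=q^{\pm(2j+1)n}$ is here $\frac{1+q}{1+q^{(2j+1)n}}\,[(2j+1)n]$ rather than $q^{(1-(2j+1)n)/2}[(2j+1)n]$, which still matches on both sides by exactly the multiplicativity argument you describe.
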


\begin{proof}[Sketch of proof]
Letting $b=-1$ in \cite[Theorem 4.8]{GZ19a}, we get the following $q$-cong\-ruence:
modulo $[n](1-aq^n)(a-q^n)$,
\begin{align}
\sum_{k=0}^{(n-1)/d}[3k+1]\frac{(aq,q/a,q;q^2)_k  (-1;q)_k}{(aq,q/a,q;q)_k (-q^2;q)_{2k}} q^k
\equiv \frac{1+q}{1+q^n}[n],   \label{eq:div-3-new}
\end{align}
where $d=1,2$. This means that the left-hand side of \eqref{eq:div-3-new} is congruent to $0$ modulo $[n]$, and also (when $a=q^n$) that
$$
\sum_{k=0}^{(n-1)/2}
[3k+1]\frac{(q^{1-n},q^{1+n},q;q^2)_k  (-1;q)_k}{(q^{1-n},q^{1+n},q;q)_k (-q^2;q)_{2k}} q^k
=\frac{1+q}{1+q^n}[n].
$$
Thus, like in the proof of Theorem \ref{main-2}, we can establish the following parametric generalization of \eqref{eq:q-div-new-1}:
modulo
$$
[n^r]\prod_{j=\lceil (n^{r-1}-1)/(2d)\rceil}^{(n^{r-1}-1)/d}(1-aq^{(2j+1)n})(a-q^{(2j+1)n}),
$$
we have
\begin{align}
&\sum_{k=0}^{(n^r-1)/d}[3k+1]\frac{(aq,q/a,q;q^2)_k  (-1;q)_k}{(aq,q/a,q;q)_k (-q^2;q)_{2k}} q^k \notag\\
&\qquad
\equiv \frac{1+q}{1+q^n}[n]\sum_{k=0}^{(n^{r-1}-1)/d}[3k+1]_{q^n}
 \frac{(aq^{n},q^{n}/a,q^{n};q^{2n})_k (-1;q^n)_k}
{(aq^{n},q^{n}/a,q^{n};q^{n})_k (-q^{2n};q^{n})_{2k}} q^{nk}, \label{main-new-a}
\end{align}
where $d=1,2$.

Letting $a\to 1$ in \eqref{main-new-a}, we conclude that the $q$-congruence \eqref{eq:q-div-new-1} is true modulo $\prod_{j=1}^r\Phi_{n^j}(q)^3$.
Note that the proof of \cite[Theorem 6.1]{GS20b} also implies that \eqref{eq:div-3-new} modulo $[n]$ holds for $a=1$.
Applying this $q$-congruence on both sides of \eqref{eq:q-div-new-1}, we deduce that \eqref{eq:q-div-new-1}
are also true modulo $[n^r]$.
\end{proof}

\subsection{Another `divergent' Dwork-type supercongruence}
Guillera and the second author \cite{GZ12} proved the following
`divergent' Ramanujan-type supercongruence:
\begin{align}
\sum_{k=0}^{(p-1)/2} \frac{(\frac{1}{2})_k^3}{k!^3}(3k+1)(-1)^k 2^{3k} &\equiv p\bigg(\frac{-1}{p}\bigg)\pmod{p^3} \label{eq:div-3}
\end{align}
(see also \cite[Conjecture 5.1(ii)]{Su11}).
The first author \cite{Gu20a} gave a $q$-analogue of \eqref{eq:div-3} and proposed the following conjecture on Dwork-type supercongruences:
\begin{align}
\sum_{k=0}^{(p^r-1)/2} \frac{(\frac{1}{2})_k^3}{k!^3}(3k+1)(-1)^k 2^{3k}
&\equiv p\bigg(\frac{-1}{p}\bigg) \sum_{k=0}^{(p^{r-1}-1)/2} \frac{(\frac{1}{2})_k^3}{k!^3}(3k+1)(-1)^k 2^{3k} \pmod{p^{3r+\delta_{p,3}}}, \label{eq:last-1} \\
\sum_{k=0}^{p^r-1} \frac{(\frac{1}{2})_k^3}{k!^3}(3k+1)(-1)^k 2^{3k}
&\equiv p\bigg(\frac{-1}{p}\bigg) \sum_{k=0}^{p^{r-1}-1} \frac{(\frac{1}{2})_k^3}{k!^3}(3k+1)(-1)^k 2^{3k} \pmod{p^{3r}}.  \label{eq:last-2}
\end{align}

In the spirit of Theorems  \ref{main-1} and \ref{main-2}, we have the following $q$-generalization of the above two supercongruences modulo $p^{3r}$.

\begin{theorem}\label{main-3}
Let $n>1$ be odd and let $r\ge 1$. Then, modulo $[n^r]\prod_{j=1}^r\Phi_{n^j}(q)^2$,
\begin{align}
&\sum_{k=0}^{(n^r-1)/d}(-1)^k[3k+1]\frac{(q;q^2)_k^3 (-q;q)_k}{(q;q)_k^3 (-q^2;q^2)_k} q^{-{k+1\choose 2}}  \notag\\
&\qquad \equiv q^{(1-n)/2}[n]\bigg(\frac{-1}{n}\bigg)
\sum_{k=0}^{(n^{r-1}-1)/d}(-1)^k[3k+1]_{q^n}\frac{(q^{n};q^{2n})_k^3 (-q^n;q^n)_k}{(q^{n};q^{n})_k^3 (-q^{2n};q^{2n})_k} q^{-n{k+1\choose 2}}, \label{eq:div-3-2}
\end{align}
where $d=1,2$.
\end{theorem}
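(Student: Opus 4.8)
The plan is to mirror the proofs of Theorems~\ref{main-1} and~\ref{main-2}: introduce an auxiliary parameter $a$, prove a parametric $q$-congruence, and then recover~\eqref{eq:div-3-2} by specialising $a\to1$. The summand of~\eqref{eq:div-3-2} is a hybrid of the two earlier cases, since it carries a Kronecker symbol $\bigl(\frac{-1}{n}\bigr)$, as in Theorem~\ref{main-1}, together with the divergent-type power $q^{-{k+1\choose 2}}$ and a denominator that deforms into $(aq;q)_k(q/a;q)_k$, as in Theorem~\ref{main-2}. Accordingly I would deform the summand by replacing $(q;q^2)_k^2$ in the numerator by $(aq;q^2)_k(q/a;q^2)_k$ and $(q;q)_k^2$ in the denominator by $(aq;q)_k(q/a;q)_k$ (leaving the factors $(-q;q)_k$ and $(-q^2;q^2)_k$ untouched), and claim that modulo
$$
[n^r]\prod_{j=\lceil (n^{r-1}-1)/(2d)\rceil}^{(n^{r-1}-1)/d}(1-aq^{(2j+1)n})(a-q^{(2j+1)n})
$$
the parametric left-hand side equals $q^{(1-n)/2}[n]\bigl(\frac{-1}{n}\bigr)$ times the corresponding parametric right-hand side. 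The $r=1$ input---a $q$-congruence that vanishes modulo $[n]$ and, at $a=q^{\pm n}$, evaluates the truncated sum to $q^{(1-n)/2}[n]\bigl(\frac{-1}{n}\bigr)$---would be taken from the $q$-analogue of~\eqref{eq:div-3} in~\cite{Gu20a} (or a suitable specialisation of \cite[Theorem~4.8]{GZ19a}), playing the role that Lemma~\ref{lem:3} played for Theorem~\ref{main-2}.

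The parametric statement would then be proved in two independent steps, exactly as for Theorem~\ref{main-2-par}. First, replacing $n$ by $n^r$ in the basic $r=1$ congruence shows the left-hand side is $\equiv0\pmod{[n^r]}$, while the substitution $r\mapsto r-1$, $q\mapsto q^n$ shows the right-hand side is $\equiv0$ modulo $[n][n^{r-1}]_{q^n}=[n^r]$. Second, I would specialise $a=q^{-(2j+1)n}$ and $a=q^{(2j+1)n}$ for $\lceil(n^{r-1}-1)/(2d)\rceil\le j\le(n^{r-1}-1)/d$; the inequalities $(n^r-1)/d\ge((2j+1)n-1)/2$ and $(2j+1)n>(n^r-1)/d$ guarantee that the left-hand side truncates to the full identity sum with non-vanishing denominator, so it evaluates to $q^{(1-(2j+1)n)/2}[(2j+1)n]\bigl(\frac{-1}{(2j+1)n}\bigr)$, while the right-hand side evaluates to $q^{(1-n)/2}[n]\bigl(\frac{-1}{n}\bigr)\cdot q^{-jn}[2j+1]_{q^n}\bigl(\frac{-1}{2j+1}\bigr)$. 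The multiplicativity $\bigl(\frac{-1}{(2j+1)n}\bigr)=\bigl(\frac{-1}{2j+1}\bigr)\bigl(\frac{-1}{n}\bigr)$ together with $[2j+1]_{q^n}[n]=[(2j+1)n]$ and $q^{(1-n)/2}q^{-jn}=q^{(1-(2j+1)n)/2}$ makes the two sides coincide on this factor.

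With the parametric congruence established, I would let $a\to1$ and track the powers of $\Phi_{n^j}(q)$ supplied by the product above against those cancelled by the $a$-dependent part of the reduced denominator; since this $a$-dependent part is identical to the one in Theorem~\ref{main-2}, the floor/ceil bookkeeping carries over verbatim (the $d=1$ case being simpler), and yields~\eqref{eq:div-3-2} modulo $\prod_{j=1}^r\Phi_{n^j}(q)^3$, one factor of $\prod_{j=1}^r\Phi_{n^j}(q)$ coming from $[n^r]$. Finally, the $a=1$ instance of the basic congruence from~\cite{Gu20a},
$$
\sum_{k=0}^{(n-1)/d}(-1)^k[3k+1]\frac{(q;q^2)_k^3(-q;q)_k}{(q;q)_k^3(-q^2;q^2)_k}q^{-{k+1\choose 2}}\equiv0\pmod{[n]},
$$
applied to the left-hand side after $n\mapsto n^r$ and to the right-hand side after $q\mapsto q^n$, $n\mapsto n^{r-1}$, gives~\eqref{eq:div-3-2} modulo $[n^r]$. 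Combining the two coprime moduli through their least common multiple $[n^r]\prod_{j=1}^r\Phi_{n^j}(q)^2$ completes the proof.

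The step I expect to be the main obstacle is pinning down the precise $r=1$ parametric $q$-congruence and its closed-form value at $a=q^{\pm n}$ carrying the correct symbol $\bigl(\frac{-1}{n}\bigr)$: unlike the inputs quoted for Theorems~\ref{main-1} and~\ref{main-2}, this one is not stated verbatim in the excerpt and must be matched exactly to the summand $\frac{(q;q^2)_k^3(-q;q)_k}{(q;q)_k^3(-q^2;q^2)_k}q^{-{k+1\choose 2}}$; if it cannot be read off directly from~\cite{Gu20a}, it would have to be derived, e.g.\ from a terminating ${}_6\phi_5$-type evaluation. A secondary, purely technical difficulty is confirming that the exponent accounting in the $d=2$ case leaves exactly a net $\Phi_{n^j}(q)^3$ after the $a\to1$ cancellation.
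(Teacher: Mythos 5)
Your proposal follows the paper's proof essentially step for step: the same parametric deformation $(q;q^2)_k^2\mapsto(aq;q^2)_k(q/a;q^2)_k$, $(q;q)_k^2\mapsto(aq;q)_k(q/a;q)_k$, the same restricted product $\prod_{j=\lceil(n^{r-1}-1)/(2d)\rceil}^{(n^{r-1}-1)/d}(1-aq^{(2j+1)n})(a-q^{(2j+1)n})$ in the modulus, the same evaluation at $a=q^{\pm(2j+1)n}$ via multiplicativity of $\bigl(\tfrac{-1}{\cdot}\bigr)$, and the same $a\to1$ and $[n^r]$ bookkeeping. The only cosmetic difference is the source of the $r=1$ parametric input, which the paper takes from \cite[Theorem 6.1]{GS20b} with $b=-1$ and $c\to0$ (cf.\ \cite[Conjecture 4.6]{GZ19a}) rather than from \cite{Gu20a}.
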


\begin{proof}[Sketch of proof]
Letting $b=-1$ and $c\to 0$ in \cite[Theorem 6.1]{GS20b} (see also \cite[Conjecture 4.6]{GZ19a}), we get the following $q$-congruence:
modulo $[n](1-aq^n)(a-q^n)$,
\begin{align}
\sum_{k=0}^{(n-1)/d}(-1)^k[3k+1]\frac{(aq,q/a,q;q^2)_k  (-q;q)_k}{(aq,q/a,q;q)_k (-q^2;q^2)_k} q^{-{k+1\choose 2}}
\equiv q^{(1-n)/2}[n]\bigg(\frac{-1}{n}\bigg),   \label{eq:div-3-3}
\end{align}
where $d=1,2$. Namely, the left-hand side of \eqref{eq:div-3-2} is congruent to $0$ modulo $[n]$, and
$$
\sum_{k=0}^{(n-1)/2}(-1)^k[3k+1]\frac{(q^{1-n},q^{1+n},q;q^2)_k  (-q;q)_k}
{(q^{1-n},q^{1+n},q;q)_k (-q^2;q^2)_k} q^{-{k+1\choose 2}}
=q^{(1-n)/2}[n]\bigg(\frac{-1}{n}\bigg).
$$
Thus, we may establish a parametric generalization of \eqref{eq:div-3-2}:
modulo
$$
[n^r]\prod_{j=\lceil (n^{r-1}-1)/(2d)\rceil}^{(n^{r-1}-1)/d}(1-aq^{(2j+1)n})(a-q^{(2j+1)n}),
$$
we have
\begin{align}
&\sum_{k=0}^{(n^r-1)/d}(-1)^k[3k+1]\frac{(aq,q/a,q;q^2)_k (-q;q)_k}{(aq,q/a,q;q)_k (-q^2;q^2)_k} q^{-{k+1\choose 2}}  \notag\\
&\qquad \equiv q^{(1-n)/2}[n]\bigg(\frac{-1}{n}\bigg)\sum_{k=0}^{(n^{r-1}-1)/d}(-1)^k[3k+1]_{q^n} \notag\\
&\qquad\quad\times \frac{(aq^{n},q^{n}/a,q^{n};q^{2n})_k (-q^n;q^n)_k}
{(aq^{n},q^{n}/a,q^{n};q^{n})_k (-q^{2n};q^{2n})_k} q^{-n{k+1\choose 2}}, \label{main-3-a}
\end{align}
where $d=1,2$.

Letting $a\to 1$ in \eqref{main-3-a}, we know that \eqref{eq:div-3-2} holds modulo $\prod_{j=1}^r\Phi_{n^j}(q)^3$.
Applying the $q$-congruence \eqref{eq:div-3-3} modulo $[n]$ with $a=1$ on both sides of \eqref{eq:div-3-2}, we conclude that \eqref{eq:div-3-2}
is also true modulo $[n^r]$.
\end{proof}

\subsection{Two supercongruences of Swisher}
Swisher's conjectural supercongruence (B.3) from \cite{Sw15} can be stated as follows:
\begin{align*}
&\sum_{k=0}^{(p^r-1)/2}(-1)^k (4k+1)\frac{(\frac{1}{2})_k^3}{k!^3} \\[5pt]
&\qquad \equiv
\begin{cases}
\displaystyle p\bigg(\frac{-1}{p}\bigg) \sum_{k=0}^{(p^{r-1}-1)/2} (-1)^k(4k+1)\frac{(\frac{1}{2})_k^3}{k!^3} \pmod{p^{3r}} &\text{if $p\equiv 1\pmod4$},\\[15pt]
\displaystyle p^2 \sum_{k=0}^{(p^{r-2}-1)/2}
(-1)^k(4k+1)\frac{(\frac{1}{2})_k^3}{k!^3} \pmod{p^{3r-2}}
&\text{if $p\equiv 3\pmod4$, $r\ge 2$}.
\end{cases}
\end{align*}
In fact we find out that, more generally, for any prime $p>2$,
\begin{align}
\sum_{k=0}^{(p^r-1)/2}(-1)^k (4k+1)\frac{(\frac{1}{2})_k^3}{k!^3}
\equiv p\bigg(\frac{-1}{p}\bigg) \sum_{k=0}^{(p^{r-1}-1)/2}
(-1)^k(4k+1)\frac{(\frac{1}{2})_k^3}{k!^3} \pmod{p^{3r}}.
\label{eq:b3-new-1}
\end{align}
Observe that Swisher's supercongruence (B.3) for $p\equiv 3\pmod{4}$ follows from using \eqref{eq:b3-new-1} twice.
It is natural to conjecture that the following companion supercongruence of \eqref{eq:b3-new-1} is also true:
\begin{align}
\sum_{k=0}^{p^r-1}(-1)^k (4k+1)\frac{(\frac{1}{2})_k^3}{k!^3} \equiv
p\bigg(\frac{-1}{p}\bigg) \sum_{k=0}^{p^{r-1}-1}
(-1)^k(4k+1)\frac{(\frac{1}{2})_k^3}{k!^3} \pmod{p^{3r}}.
\label{eq:b3-new-2}
\end{align}
Here we prove the Dwork-type supercongruences \eqref{eq:b3-new-1} and \eqref{eq:b3-new-2}
by establishing the following $q$-analogues.

\begin{theorem}\label{main-4}
Let $n>1$ be odd and let $r\ge 1$. Then, modulo $[n^r]\prod_{j=1}^r\Phi_{n^j}(q)^2$,
\begin{align}
&\sum_{k=0}^{(n^r-1)/d}(-1)^k[4k+1]\frac{(q;q^2)_k^2 (q^2;q^4)_k}{(q^2;q^2)_k^2 (q^4;q^4)_k}  \notag\\
&\qquad \equiv q^{(1-n)/2}[n]\bigg(\frac{-1}{n}\bigg)
\sum_{k=0}^{(n^{r-1}-1)/d}(-1)^k[4k+1]_{q^n}\frac{(q^n;q^{2n})_k^2 (q^{2n};q^{4n})_k}{(q^{2n};q^{2n})_k^2 (q^{4n};q^{4n})_k}, \label{q-b3}
\end{align}
where $d=1,2$.
\end{theorem}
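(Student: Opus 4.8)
The plan is to follow the template used for Theorems~\ref{main-2} and~\ref{main-3}: deduce \eqref{q-b3} from an $a$-parametric generalization whose engine is the $r=1$ parametric $q$-congruence underlying Van Hamme's (B.2) supercongruence. Concretely, I would start from the $q$-congruence
\begin{align}
&\sum_{k=0}^{(n-1)/d}(-1)^k[4k+1]\frac{(aq;q^2)_k(q/a;q^2)_k(q^2;q^4)_k}{(aq^2;q^2)_k(q^2/a;q^2)_k(q^4;q^4)_k}\notag\\
&\qquad\equiv q^{(1-n)/2}[n]\bigg(\frac{-1}{n}\bigg)\pmod{[n](1-aq^n)(a-q^n)},\label{eq:b3-base}
\end{align}
valid for $d=1,2$, which is a specialization of a result of \cite{GZ19a} (the analogue of the $b=-1$ inputs used for Theorems~\ref{main-new} and~\ref{main-3}). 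Reading \eqref{eq:b3-base} modulo $[n]$ supplies the vanishing I need for the $[n^r]$ part, while setting $a=q^{-n}$ turns \eqref{eq:b3-base} into a terminating exact identity with value $q^{(1-n)/2}[n]\big(\tfrac{-1}{n}\big)$\,---\,the evaluation driving the $a$-specializations below.

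Next I would prove the parametric generalization: modulo $[n^r]\prod_{j=0}^{(n^{r-1}-1)/d}(1-aq^{(2j+1)n})(a-q^{(2j+1)n})$,
\begin{align}
&\sum_{k=0}^{(n^r-1)/d}(-1)^k[4k+1]\frac{(aq;q^2)_k(q/a;q^2)_k(q^2;q^4)_k}{(aq^2;q^2)_k(q^2/a;q^2)_k(q^4;q^4)_k}\notag\\
&\quad\equiv q^{(1-n)/2}[n]\bigg(\frac{-1}{n}\bigg)\sum_{k=0}^{(n^{r-1}-1)/d}(-1)^k[4k+1]_{q^n}\frac{(aq^n;q^{2n})_k(q^n/a;q^{2n})_k(q^{2n};q^{4n})_k}{(aq^{2n};q^{2n})_k(q^{2n}/a;q^{2n})_k(q^{4n};q^{4n})_k}.\label{eq:b3-par}
\end{align}
The two factors of the modulus are coprime and handled separately. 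For $[n^r]$, I would apply \eqref{eq:b3-base} modulo $[n]$ with $n$ replaced by $n^r$ to annihilate the left-hand side, and with $q\mapsto q^n$, $n\mapsto n^{r-1}$ to annihilate the right-hand side modulo $[n][n^{r-1}]_{q^n}=[n^r]$. For the product, it suffices to verify that the two sides of \eqref{eq:b3-par} coincide at $a=q^{\pm(2j+1)n}$ for $0\le j\le(n^{r-1}-1)/d$; here $(n^r-1)/d\ge((2j+1)n-1)/2$, and since $(2j+1)n$ is odd the even-step denominators never vanish, so the factor $(q^{1-(2j+1)n};q^2)_k$ truncates the left sum and the terminating evaluation (with $n\mapsto(2j+1)n$) gives $q^{(1-(2j+1)n)/2}[(2j+1)n]\big(\tfrac{-1}{(2j+1)n}\big)$. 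On the right the evaluation with $q\mapsto q^n$ yields the factor $q^{-jn}[2j+1]_{q^n}\big(\tfrac{-1}{2j+1}\big)$, and the identities $[n]_q[2j+1]_{q^n}=[(2j+1)n]_q$, $\big(\tfrac{-1}{n}\big)\big(\tfrac{-1}{2j+1}\big)=\big(\tfrac{-1}{(2j+1)n}\big)$ and $q^{(1-n)/2}q^{-jn}=q^{(1-(2j+1)n)/2}$ make the sides agree.

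Finally I would let $a\to1$ in \eqref{eq:b3-par}. Each factor $(1-aq^{(2j+1)n})(a-q^{(2j+1)n})$ tends to $(1-q^{(2j+1)n})^2$, so the product contributes $\Phi_{n^j}(q)$ to the power $2n^{r-j}$ (for $d=1$) or $n^{r-j}+1$ (for $d=2$), while the $a$-part of the denominators, namely $(aq^2;q^2)_{(n^r-1)/d}(q^2/a;q^2)_{(n^r-1)/d}$, contributes exactly two fewer powers of each $\Phi_{n^j}(q)$; clearing denominators therefore leaves the congruence valid modulo $\prod_{j=1}^r\Phi_{n^j}(q)^2$, and the extra power supplied by $[n^r]$ upgrades this to $\prod_{j=1}^r\Phi_{n^j}(q)^3$. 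Since the $a=1$ instance of \eqref{eq:b3-base} shows both sides of \eqref{q-b3} are $\equiv0\pmod{[n^r]}$ and $\operatorname{lcm}\big([n^r],\prod_{j=1}^r\Phi_{n^j}(q)^3\big)=[n^r]\prod_{j=1}^r\Phi_{n^j}(q)^2$, the theorem would follow.

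The hard part will be securing the base input \eqref{eq:b3-base} in precisely this shape: one must locate the correct specialization in \cite{GZ19a} that produces the modulus $[n](1-aq^n)(a-q^n)$, the Kronecker symbol $\big(\tfrac{-1}{n}\big)$ with the right sign, and the \emph{unparametrized} factor $(q^2;q^4)_k/(q^4;q^4)_k$ rather than the alternative $q$-analogue $(q;q^2)_k/(q^2;q^2)_k$. Equally delicate is the degenerate value $a=1$, whose validity modulo $[n]$ had to be argued separately for Theorems~\ref{main-new} and~\ref{main-3} and will again be needed here for the $[n^r]$ part; and one must double-check the cyclotomic bookkeeping in the $a\to1$ limit, confirming that for both $d=1$ and $d=2$ the product exceeds the $a$-part of the denominator by exactly two powers of each $\Phi_{n^j}(q)$, so that the final exponent is exactly~$3$.
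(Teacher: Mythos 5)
Your proposal follows essentially the same route as the paper: the authors also start from the $a$-parametric congruence obtained by taking $c=-1$ in \cite[Theorem 4.2]{GZ19a} (exactly your \eqref{eq:b3-base}, modulus $[n](1-aq^n)(a-q^n)$ and value $q^{(1-n)/2}[n]\bigl(\frac{-1}{n}\bigr)$), promote it to the same parametric generalization modulo $[n^r]\prod_{j=0}^{(n^{r-1}-1)/d}(1-aq^{(2j+1)n})(a-q^{(2j+1)n})$ via the evaluations at $a=q^{\pm(2j+1)n}$, and then take $a\to1$ with the same cyclotomic bookkeeping, finishing with the $a=1$ case of the base congruence for the $[n^r]$ part. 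Your count that the product exceeds the $a$-part of the denominators by exactly two powers of each $\Phi_{n^j}(q)$ for both $d=1,2$ is correct and matches the paper's computation.
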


\begin{proof}[Sketch of proof]
Letting $c=-1$ in \cite[Theorem 4.2]{GZ19a}, we obtain the following $q$-cong\-ruence for
odd $n$: modulo $[n](1-aq^n)(a-q^n)$,
\begin{align}
\sum_{k=0}^{(n-1)/d}(-1)^k[4k+1]\frac{(aq,q/a;q^2)_k(q^2;q^4)_k} {(aq^2,q^2/a;q^2)_k (q^4;q^4)_k}
\equiv q^{(1-n)/2}[n]\bigg(\frac{-1}{n}\bigg),  \label{qb2-new}
\end{align}
where $d=1,2$. That is to say, the left-hand side of  \eqref{qb2-new} is congruent to $0$ modulo $[n]$, and
\begin{align*}
\sum_{k=0}^{(n-1)/2}(-1)^k[4k+1]\frac{(q^{1-n},q^{1+n};q^2)_k (q^2;q^4)_k} {(q^{2-n},q^{2+n};q^2)_k (q^4;q^4)_k}=q^{(1-n)/2}[n]\bigg(\frac{-1}{n}\bigg).
\end{align*}
Along the lines of our proof of Theorem \ref{main-1}, we can prove the following parametric version of \eqref{q-b3}:
modulo
$$
[n^r]\prod_{j=0}^{(n^{r-1}-1)/d}(1-aq^{(2j+1)n})(a-q^{(2j+1)n}),
$$
we have
\begin{align}
&\sum_{k=0}^{(n^r-1)/d}(-1)^k[4k+1]\frac{(aq,q/a;q^2)_k (q^2;q^4)_k}{(aq^2,q^2/a;q^2)_k (q^4;q^4)_k}  \notag\\
&\qquad \equiv q^{(1-n)/2}[n]\bigg(\frac{-1}{n}\bigg)
\sum_{k=0}^{(n^{r-1}-1)/d}(-1)^k[4k+1]_{q^n}\frac{(aq^n,q^n/a;q^{2n})_k (q^{2n};q^{4n})_k}{(aq^{2n},q^{2n}/a;q^{2n})_k (q^{4n};q^{4n})_k},  \label{eq:main-4-a}
\end{align}
where $d=1,2$.

Letting $a\to 1$ in \eqref{eq:main-4-a}, we see that \eqref{q-b3} is true modulo $\prod_{j=1}^r\Phi_{n^j}(q)^3$.
Note that the proof of \cite[Theorem 4.2]{GZ19a} also indicates that the $q$-congruence \eqref{qb2-new} modulo $[n]$ hold for $a=1$.
Applying this $q$-congruence on both sides of \eqref{q-b3}, we conclude that \eqref{q-b3}
is also true modulo $[n^r]$.
\end{proof}

Swisher \cite[Conjecture (L.3)]{Sw15}
conjectured that, for $r\ge 1$,
\begin{align}
\sum_{k=0}^{(p^r-1)/2} (-1)^k(6k+1)\frac{(\frac{1}{2})_k^3}{k!^3 8^k}
\equiv p\bigg(\frac{-2}{p}\bigg) \sum_{k=0}^{(p^{r-1}-1)/2} (-1)^k(6k+1)\frac{(\frac{1}{2})_k^3}{k!^3 8^k} \pmod{p^{3r}}. \label{L3-1}
\end{align}
Recently, the first author \cite[Conjecture 4.5]{Gu18a} made the following similar conjecture:
\begin{align}
\sum_{k=0}^{p^r-1} (-1)^k(6k+1)\frac{(\frac{1}{2})_k^3}{k!^3 8^k}
\equiv p\bigg(\frac{-2}{p}\bigg) \sum_{k=0}^{p^{r-1}-1} (-1)^k(6k+1)\frac{(\frac{1}{2})_k^3}{k!^3 8^k} \pmod{p^{3r}}.   \label{L3-2}
\end{align}
We confirm the supercongruences \eqref{L3-1} and \eqref{L3-2} by establishing the following Dwork-type $q$-supercongruence.

\begin{theorem}\label{main-5}
Let $n>1$ be odd and let $r\ge 1$. Then, modulo $[n^r]\prod_{j=1}^r\Phi_{n^j}(q)^2$,
\begin{align}
&\sum_{k=0}^{(n^r-1)/d}(-1)^k[6k+1]\frac{(q;q^2)_k^3 (-q^2;q^4)_k}{(q^4;q^4)_k^3 (-q;q^2)_k} q^{k^2}  \notag\\
&\qquad \equiv q^{(1-n)/2}[n]\bigg(\frac{-2}{n}\bigg)
\sum_{k=0}^{(n^{r-1}-1)/d}(-1)^k[6k+1]_{q^n}\frac{(q^n;q^{2n})_k^3 (-q^{2n};q^{4n})_k}{(q^{4n};q^{4n})_k^3 (-q^n;q^{2n})_k} q^{nk^2}, \label{eq:div-2-1}
\end{align}
where $d=1,2$.
\end{theorem}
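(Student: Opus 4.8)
The plan is to reproduce, essentially verbatim, the creative-microscoping argument used for Theorems~\ref{main-1} and~\ref{main-4}, the one new ingredient being an $r=1$ parametric seed. First I would extract such a seed by specializing an appropriate theorem of~\cite{GZ19a} (the one governing the $(6k+1)$-series behind Swisher's (L.3)): for odd $n$ and $d=1,2$ one gets, modulo $[n](1-aq^n)(a-q^n)$, a parametric $q$-congruence whose summand carries the symmetric numerator factor $(aq;q^2)_k(q/a;q^2)_k$ in place of two of the three $(q;q^2)_k$ of \eqref{eq:div-2-1}, and whose right-hand side equals $q^{(1-n)/2}[n]\bigl(\frac{-2}{n}\bigr)$. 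From this seed I read off exactly the two facts the template consumes: its left-hand side is $\equiv0\pmod{[n]}$, and the two substitutions $a=q^{\pm n}$ truncate the sum at $k=(n-1)/2$ and upgrade the congruence to an \emph{exact} $q$-identity with value $q^{(1-n)/2}[n]\bigl(\frac{-2}{n}\bigr)$.

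Next, imitating Theorem~\ref{main-1gen}, I would lift the seed to arbitrary $r$, proving that the corresponding $a$-parametrized version of \eqref{eq:div-2-1} holds modulo
$$
[n^r]\prod_{j=0}^{(n^{r-1}-1)/d}(1-aq^{(2j+1)n})(a-q^{(2j+1)n}).
$$
The $[n^r]$ part is automatic: the seed with $n\mapsto n^r$ annihilates the left-hand side modulo $[n^r]$, while the seed with $r\mapsto r-1$ and $q\mapsto q^n$ annihilates the right-hand side modulo $[n][n^{r-1}]_{q^n}=[n^r]$, once one notes that $[n]$ is coprime to the summand denominators. For the product part I would verify that the two sides coincide at every $a=q^{\pm(2j+1)n}$ with $0\le j\le(n^{r-1}-1)/d$: the left side collapses, by the exact $q$-identity of the previous step, to $q^{(1-(2j+1)n)/2}[(2j+1)n]\bigl(\frac{-2}{(2j+1)n}\bigr)$, and the right side factors as $q^{(1-n)/2}[n]\bigl(\frac{-2}{n}\bigr)\cdot q^{-jn}[2j+1]_{q^n}\bigl(\frac{-2}{2j+1}\bigr)$; the two agree by the multiplicativity $\bigl(\frac{-2}{(2j+1)n}\bigr)=\bigl(\frac{-2}{n}\bigr)\bigl(\frac{-2}{2j+1}\bigr)$ of the Kronecker symbol. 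Since $[n^r]$ and the $a$-product are coprime, this proves the parametric statement.

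Finally I would set $a\to1$. As in Theorem~\ref{main-1}, the $a$-product degenerates into a fixed power of $\prod_{j=1}^r\Phi_{n^j}(q)$, a part of which is absorbed by the $a$-dependent factor in the denominator of the summand; the surviving factor makes \eqref{eq:div-2-1} hold modulo $\prod_{j=1}^r\Phi_{n^j}(q)^3$. Combining this with the $a=1$ instance of the seed (which delivers the $[n^r]$ part on both sides, exactly as in the proofs of Theorems~\ref{main-1} and~\ref{main-4}) and using that the least common multiple of $[n^r]$ and $\prod_{j=1}^r\Phi_{n^j}(q)^2$ is $[n^r]\prod_{j=1}^r\Phi_{n^j}(q)^2$ completes the proof of Theorem~\ref{main-5}; the limit $q\to1$ with $n=p$ then yields the Dwork-type supercongruences \eqref{L3-1} and \eqref{L3-2}.

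The step I expect to be the main obstacle is the cyclotomic bookkeeping in the $a\to1$ limit. One has to count, separately for $d=1$ and $d=2$, how many copies of each $\Phi_{n^j}(q)$ the limiting $a$-product supplies against how many are cancelled by the $a$-factor of the denominator, and confirm the net exponent is precisely $2$ (the third copy being already contributed by $[n^r]$). What makes this trickier than in Theorem~\ref{main-1} is the clash of bases between the numerator factor $(aq;q^2)_k(q/a;q^2)_k$ and the cube $(q^4;q^4)_k^3$, together with the twist $(-q^2;q^4)_k/(-q;q^2)_k$ appearing in \eqref{eq:div-2-1}; getting the exponent count right for both parities of the truncation is where the care lies, with the Kronecker identity above a secondary point to check.
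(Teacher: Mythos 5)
Your proposal follows essentially the same route as the paper: the authors take the seed congruence from the $b=-q^2$ specialization of \cite[Theorem 4.5]{GZ19a} (exactly the parametric $(6k+1)$-congruence you describe, with value $q^{(1-n)/2}[n]\bigl(\frac{-2}{n}\bigr)$ modulo $[n](1-aq^n)(a-q^n)$), lift it to the parametric $r$-version modulo $[n^r]\prod_{j=0}^{(n^{r-1}-1)/d}(1-aq^{(2j+1)n})(a-q^{(2j+1)n})$ by the same evaluation at $a=q^{\pm(2j+1)n}$ and Kronecker-symbol multiplicativity, and then let $a\to1$, handling the $[n^r]$ part via the $a=1$ seed. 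The cyclotomic bookkeeping you flag as the main obstacle is indeed left implicit in the paper's sketch as well, which simply refers back to the counting done for Theorems~\ref{main-1} and~\ref{main-4}.
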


\begin{proof}[Sketch of proof]
Setting $b=-q^2$ in \cite[Theorem 4.5]{GZ19a}, we are led to the following $q$-congruence:
modulo $[n](1-aq^n)(a-q^n)$,
\begin{align}
\sum_{k=0}^{(n^r-1)/d}(-1)^k[6k+1]\frac{(aq,q/a,q;q^2)_k (-q^2;q^4)_k}{(aq^4,q^4/a,q^4;q^4)_k  (-q;q^2)_k} q^{k^2}
\equiv q^{(1-n)/2}[n]\bigg(\frac{-2}{n}\bigg). \label{eq:div-2-2a}
\end{align}
Thus, we can prove the following parametric version of \eqref{q-b3}: modulo
$$
[n^r]\prod_{j=0}^{(n^{r-1}-1)/d}(1-aq^{(2j+1)n})(a-q^{(2j+1)n}),
$$
we have
\begin{align}
&\sum_{k=0}^{(n^r-1)/d}(-1)^k[6k+1]\frac{(aq,q/a,q;q^2)_k  (-q^2;q^4)_k}{(aq^4,q^4/a,q^4;q^4)_k (-q;q^2)_k} q^{k^2}  \notag\\
&\quad \equiv q^{(1-n)/2}[n]\bigg(\frac{-2}{n}\bigg)
\sum_{k=0}^{(n^{r-1}-1)/d}(-1)^k[6k+1]_{q^n}
\frac{(aq^n,q^n/a,q^n;q^{2n})_k (-q^{2n};q^{4n})_k}{(aq^{4n},q^{4n}/a,q^{4n};q^{4n})_k (-q^n;q^{2n})_k} q^{nk^2},  \label{eq:main-5-a}
\end{align}
where $d=1,2$. The proof of  \eqref{eq:div-2-1} modulo $\prod_{j=1}^r\Phi_{n^j}(q)^3$ then follows by taking the limit as $a\to 1$ in \eqref{eq:main-5-a}, and the proof of
\eqref{eq:div-2-1} modulo $[n^r]$ follows from the $q$-congruence \eqref{eq:div-2-2a} modulo $[n]$ with $a=1$.
\end{proof}

\subsection{Another two supercongruences from Swisher's list}
In \cite[Conjectures (E.3), (F.3)]{Sw15} Swisher proposed the following conjectures:
\begin{align}
&\sum_{k=0}^{(p^r-1)/3} \frac{(6k+1)(\frac{1}{3})_k^3}{k!^3 (-1)^k}
\equiv p  \sum_{k=0}^{(p^{r-1}-1)/3} \frac{(6k+1)(\frac{1}{3})_k^3}{k!^3(-1)^k} \pmod{p^{3r}}\quad\text{for $p\equiv 1\pmod 3$}, \label{e3}\\
&\sum_{k=0}^{(p^r-1)/4}\frac{(8k+1)(\frac{1}{4})_k^3}{k!^3 (-1)^k }
\equiv p\left(\frac{-2}{p}\right)  \sum_{k=0}^{(p^{r-1}-1)/4} \frac{(8k+1)(\frac{1}{4})_k^3}{k!^3 (-1)^k} \pmod{p^{3r}}\quad\text{for $p\equiv 1\pmod 4$}.  \label{f3}
\end{align}
Here we confirm \eqref{e3} and \eqref{f3} by showing the following $q$-analogues.

\begin{theorem}\label{main-e}
Let $n>1$ be an integer with $n\equiv 1\pmod{6}$ and let $r\ge 1$. Then, modulo $[n^r]_{q^2}\prod_{j=1}^r\Phi_{n^j}(q^2)^2$,
\begin{align}
&\sum_{k=0}^{(n^r-1)/d}(-1)^k[6k+1]_{q^2}\frac{(q^2;q^6)_k^3(-q^3;q^6)_k}{(q^6;q^6)_k^3 (-q^5;q^6)_k} q^k  \notag\\
&\qquad \equiv q^{1-n}[n]_{q^2}
\sum_{k=0}^{(n^{r-1}-1)/d}(-1)^k[6k+1]_{q^{2n}}\frac{(q^{2n};q^{6n})_k^3(-q^{3n};q^{6n})_k}{(q^{6n};q^{6n})_k^3 (-q^{5n};q^{6n})_k} q^{nk}, \label{q-e3}
\end{align}
where $d=1,3$.
\end{theorem}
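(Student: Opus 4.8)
The plan is to run the creative-microscoping scheme of Section~\ref{sec2} with the base variable $q$ systematically replaced by $q^2$, so that $[n]_{q^2}$ and $\Phi_{n^j}(q^2)$ take over the roles played by $[n]$ and $\Phi_{n^j}(q)$ in the proof of Theorem~\ref{main-1}. The congruence \eqref{q-e3} belongs to the ``$\frac13$-family'', so the first task is to locate the correct $r=1$ parametric ingredient in \cite{GZ19a}: specializing one of its free parameters (and reading the resulting identity in the variable $q^2$) should yield an $a$-dependent $q$-congruence which, modulo $[n]_{q^2}(1-aq^{2n})(a-q^{2n})$, evaluates
\[
\sum_{k=0}^{(n-1)/d}(-1)^k[6k+1]_{q^2}\frac{(aq^2;q^6)_k(q^2/a;q^6)_k(q^2;q^6)_k(-q^3;q^6)_k}{(aq^6;q^6)_k(q^6/a;q^6)_k(q^6;q^6)_k(-q^5;q^6)_k}q^k
\]
to $q^{1-n}[n]_{q^2}$. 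From this base congruence I would extract the two facts needed downstream: the $a$-free left-hand side of \eqref{q-e3} is $\equiv0\pmod{[n]_{q^2}}$, and each terminating specialization $a=q^{\pm s}$ (with $s$ a suitable multiple of $n$) collapses the sum to an explicit closed form.

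Next I would lift this to the parametric generalization of \eqref{q-e3}, proving it modulo $[n^r]_{q^2}$ times the product of the $a$-factors $(1-aq^{\ast})(a-q^{\ast})$ attached to those terminating specializations, with the index set and its $d=1,3$ split being the exact analogue of the products appearing in Theorems~\ref{main-1gen} and~\ref{main-2-par}. Divisibility by $[n^r]_{q^2}$ follows as in the proof of Theorem~\ref{main-1gen}: replace $n$ by $n^r$ on the left and let $(r,q)\mapsto(r-1,q^n)$ on the right, invoking that $[n]_{q^2}$ is coprime to the $(-q^5;q^6)$-type denominators because $n$ is odd. Divisibility by the $a$-product is checked factor by factor: at each admissible specialization both sides terminate and, by the $r=1$ evaluation above, reduce to closed forms that agree after the factorization $[cn]_{q^2}=[c]_{q^{2n}}[n]_{q^2}$ (up to an explicit power of $q$).

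Letting $a\to1$ then forces the $a$-product to contribute exactly $\prod_{j=1}^r\Phi_{n^j}(q^2)^3$ once the surviving part of the $a$-denominator is divided out; combined with the independent divisibility by $[n^r]_{q^2}$ coming from the $a=1$ instance of the base congruence, and after taking least common multiples, this yields the stated modulus $[n^r]_{q^2}\prod_{j=1}^r\Phi_{n^j}(q^2)^2$. I expect the genuine difficulty to be concentrated in two bookkeeping points rather than in any single clever step. First, one must pin down the precise specialization of \cite{GZ19a} reproducing the summand of \eqref{q-e3} exactly, matching the even bases $q^2,q^6$ against the odd twists $q^3,q^5,q^k$; the period-$6$ structure here is genuinely different from the period-$4$ situations in Theorems~\ref{main-4} and~\ref{main-5}, so it cannot simply be read off by a $q\mapsto q^2$ substitution from an earlier proof. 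Second, and most delicately, the cyclotomic-multiplicity count of the $a\to1$ limit must be carried out in the variable $q^2$ under the hypothesis $n\equiv1\pmod6$, which is exactly what fixes the correct range of the $a$-product in the $d=3$ case and guarantees that $\prod_{j=1}^r\Phi_{n^j}(q^2)$ emerges with the right exponent.
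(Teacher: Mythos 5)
Your proposal follows essentially the same route as the paper: the authors first extend \cite[Theorem 4.2]{GZ19a} to a parametric congruence \eqref{eq:q-Long-gen} for general period $m$ with $n\equiv1\pmod m$, then specialize $m=3$, $q\mapsto q^2$, $c=-q^{-1}$ to get exactly the base congruence you describe (with value $q^{1-n}[n]_{q^2}$, since $(-1)^{n-1}=1$ for odd $n$), lift it to a parametric version modulo $[n^r]_{q^2}\prod_{j=0}^{(n^{r-1}-1)/d}(1-aq^{(6j+2)n})(a-q^{(6j+2)n})$, and let $a\to1$. Your two flagged ``bookkeeping points'' are precisely where the paper's work lies, so the plan is sound and matches the published argument.
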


\begin{proof}[Sketch of proof] It is easy to see that \cite[Theorem 4.2]{GZ19a} can be generalized as follows.
Modulo $[n](1-aq^n)(a-q^n)$,
\begin{align}
&\sum_{k=0}^{(n-1)/d}[2mk+1]\frac{(aq,q/a,q/c,q;q^m)_k}
{(aq^m,q^m/a,cq^m,q^m;q^m)_k} c^kq^{(m-2)k}  \notag\\
&\qquad\equiv\frac{(c/q)^{(n-1)/m} (q^2/c;q^m)_{(n-1)/m}}{(cq^m;q^m)_{(n-1)/m}}[n] \quad\text{for $n\equiv 1\pmod{m}$},
\label{eq:q-Long-gen}
\end{align}
where $d=1$ or $m$.
Here we emphasize that, in order to prove \eqref{eq:q-Long-gen} holds modulo $[n]$, we need to show that
\begin{align*}
\sum_{k=0}^{n-1}[2mk+1]\frac{(aq,q/a,q/c,q;q^m)_k}
{(aq^m,q^m/a,cq^m,q^m;q^m)_k} c^kq^{(m-2)k}
\equiv 0\pmod{\Phi_n(q)}
\end{align*}
is true for all integers $n>1$ with $\gcd(n,m)=1$.
Then we use the same arguments as \cite[Theorems 1.2 and 1.3]{GZ19a} to deal with the modulus $[n]$ case.

We now put $m=3$, $q\mapsto q^2$ and $c=-q^{-1}$ in \eqref{eq:q-Long-gen} to get
\begin{align*}
&\sum_{k=0}^{(n-1)/d}(-1)^k[6k+1]_{q^2}\frac{(aq^2,q^2/a,q^2,-q^3;q^6)_k}
{(aq^6,q^6/a,q^6,-q^5;q^6)_k} q^k \notag\\
&\qquad\equiv q^{1-n}[n]_{q^2}(-1)^{n-1} \pmod{\Phi_n(q^2)(1-aq^{2n})(a-q^{2n})} \quad\text{for $n\equiv 1\pmod{6}$},
\end{align*}
where $d=1,3$. Using this $q$-congruence, we can produce a generalization of \eqref{q-e3} with an extra parameter~$a$:
modulo
$$
[n^r]_{q^2}\prod_{j=0}^{(n^{r-1}-1)/d}(1-aq^{(6j+2)n})(a-q^{(6j+2)n}),
$$
we have
{\small\begin{align*}
&\sum_{k=0}^{(n^r-1)/d}(-1)^k[6k+1]_{q^2}\frac{(aq^2,q^2/a,q^2,-q^3;q^6)_k}
{(aq^6,q^6/a,q^6,-q^5;q^6)_k} q^k  \notag\\
&\qquad \equiv q^{1-n}[n]_{q^2}
\sum_{k=0}^{(n^{r-1}-1)/d}(-1)^k[6k+1]_{q^{2n}}
\frac{(aq^{2n},q^{2n}/a,q^{2n},-q^{3n};q^{6n})_k}
{(aq^{6n},q^{6n}/a,q^{6n},-q^{5n};q^{6n})_k} q^{nk},
\end{align*}}%
where $d=1,3$.
\end{proof}

It is easy to see that, when $n=p$ and $q\to 1$, the $q$-supercongruence \eqref{q-e3} for $d=3$ reduces to \eqref{e3}, and
it for $d=1$  confirms the first supercongruence in \cite[Conjecture 5.3]{Gu19a}. Moreover, letting $n=p$ and $q\to -1$ in \eqref{q-e3},
we obtain the following new Dwork-type supercongruence: for $p\equiv 1\pmod{3}$,
\begin{align*}
&\sum_{k=0}^{(p^r-1)/d} (6k+1) \frac{(\frac{1}{3})_k^3 (\frac{1}{2})_k}{k!^3 (\frac{5}{6})_k}
\equiv p  \sum_{k=0}^{(p^{r-1}-1)/d} (6k+1) \frac{(\frac{1}{3})_k^3 (\frac{1}{2})_k}{k!^3 (\frac{5}{6})_k} \pmod{p^{3r}},
\end{align*}
where $d=1,3$.

When $r$ is even and $p>3$, we always have $p^2\equiv 1\pmod{24}$. Thus, letting $n=p^2$, $r\mapsto r/2$ and $q\to 1$ in \eqref{q-e3}
we arrive at
\begin{align}
\sum_{k=0}^{(p^r-1)/3} \frac{(6k+1)(\frac{1}{3})_k^3}{k!^3 (-1)^k}
\equiv p^2  \sum_{k=0}^{(p^{r-2}-1)/3} \frac{(6k+1)(\frac{1}{3})_k^3}{k!^3(-1)^k} \pmod{p^{2r}}\quad \text{for $r\ge 2$ even}.  \label{e-even}
\end{align}
This partially confirm the second case of \cite[Conjecture (E.3)]{Sw15}, which asserts that \eqref{e-even} holds modulo $p^{3r-2}$ for $p\equiv 2\pmod{3}$.

\begin{theorem}\label{main-f}
Let $n>1$ be an integer with $n\equiv 1\pmod{4}$ and let $r\ge 1$. Then, modulo $[n^r]\prod_{j=1}^r\Phi_{n^j}(q)^2$,
\begin{align}
&\sum_{k=0}^{(n^r-1)/d}(-1)^k[8k+1]\frac{(q;q^4)_k^3 (-q^2;q^4)_k}{(q^4;q^4)_k^3 (-q^3;q^4)_k} q^k  \notag\\
&\quad \equiv q^{(1-n)/2}[n]\bigg(\frac{-2}{n}\bigg)
\sum_{k=0}^{(n^{r-1}-1)/d}(-1)^k[8k+1]_{q^n}\frac{(q^n;q^{4n})_k^3 (-q^{2n};q^{4n})_k}{(q^{4n};q^{4n})_k^3 (-q^{3n};q^{2n})_k} q^{nk}, \label{q-f3}
\end{align}
where $d=1,4$.
\end{theorem}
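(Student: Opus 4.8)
The plan is to follow the scheme of Theorems~\ref{main-4} and~\ref{main-e} closely, specialising the generalisation \eqref{eq:q-Long-gen} of \cite[Theorem 4.2]{GZ19a} to the case $m=4$, $c=-q^{-1}$. A direct computation gives $(q/c;q^4)_k=(-q^2;q^4)_k$, $(cq^4;q^4)_k=(-q^3;q^4)_k$ and $c^kq^{(m-2)k}=(-1)^kq^k$, while in the closed form $(q^2/c;q^4)_{(n-1)/4}=(cq^4;q^4)_{(n-1)/4}$ cancels and $(c/q)^{(n-1)/4}=(-1)^{(n-1)/4}q^{-(n-1)/2}=\bigl(\frac{-2}{n}\bigr)q^{(1-n)/2}$ for $n\equiv1\pmod4$. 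Thus, for such $n$ and $d=1,4$, one obtains modulo $[n](1-aq^n)(a-q^n)$ the base $q$-congruence
\begin{align*}
&\sum_{k=0}^{(n-1)/d}(-1)^k[8k+1]\frac{(aq;q^4)_k(q/a;q^4)_k(q;q^4)_k(-q^2;q^4)_k}{(aq^4;q^4)_k(q^4/a;q^4)_k(q^4;q^4)_k(-q^3;q^4)_k}q^k \\
&\qquad\equiv q^{(1-n)/2}\biggl(\frac{-2}{n}\biggr)[n].
\end{align*}
Since the right-hand side carries the factor $[n]$, the left-hand side vanishes modulo $[n]$; and because $a=q^{\pm n}$ are the zeros of the modulus, the congruence collapses there to an \emph{exact} $q$-identity for the terminating sum, which plays the role of Lemma~\ref{lem:2}.

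Next I would establish the $a$-parametrised refinement of \eqref{q-f3}: inserting the factors $(aq;q^4)_k(q/a;q^4)_k$ into the left sum and $(aq^n;q^{4n})_k(q^n/a;q^{4n})_k$ into the right sum, the two sides agree modulo
\[
[n^r]\prod_{j=0}^{(n^{r-1}-1)/d}(1-aq^{(4j+1)n})(a-q^{(4j+1)n}).
\]
The exponent $(4j+1)n$ is forced by the requirement $(4j+1)n\equiv1\pmod4$, which makes the base identity applicable after the substitution $n\mapsto(4j+1)n$. The $[n^r]$ part is immediate: replacing $n$ by $n^r$ in the vanishing-mod-$[n]$ statement annihilates the left-hand side, and replacing $r$ by $r-1$, $q$ by $q^n$ annihilates the right-hand side modulo $[n][n^{r-1}]_{q^n}=[n^r]$. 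For the product part I would substitute $a=q^{\mp(4j+1)n}$; the base identity with $n\mapsto(4j+1)n$ evaluates the left sum to $q^{(1-(4j+1)n)/2}\bigl(\frac{-2}{(4j+1)n}\bigr)[(4j+1)n]$, while the same identity with $q\mapsto q^n$ and $n\mapsto 4j+1$ evaluates the truncated right sum to $q^{-2jn}\bigl(\frac{-2}{4j+1}\bigr)[4j+1]_{q^n}$; the two sides then coincide by the multiplicativity relations $[(4j+1)n]=[n]\,[4j+1]_{q^n}$ and $\bigl(\frac{-2}{(4j+1)n}\bigr)=\bigl(\frac{-2}{n}\bigr)\bigl(\frac{-2}{4j+1}\bigr)$ together with $\tfrac{1-(4j+1)n}{2}=\tfrac{1-n}{2}-2jn$.

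Finally I would let $a\to1$. As in Theorem~\ref{main-1}, the limiting product supplies a high power of each $\Phi_{n^i}(q)$, most of which is absorbed by the $a$-dependent part of the left-hand denominator, the surviving net contribution being $\prod_{j=1}^r\Phi_{n^j}(q)^3$ (one of the three powers coming from $[n^r]$). Combined with the modulus $[n^r]$ obtained from the $a=1$ case of the base congruence modulo $[n]$ (applied with $n\mapsto n^r$ on the left and with $q\mapsto q^n$, $r\mapsto r-1$ on the right), the least common multiple $[n^r]\prod_{j=1}^r\Phi_{n^j}(q)^2$ results exactly as before.

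The step I expect to be the main obstacle is the power count in the $a\to1$ limit for $d=4$: one must verify that the exponent of each $\Phi_{n^i}(q)$ produced by $\prod_j(1-q^{(4j+1)n})^2$, minus the exponent removed by the denominator, is precisely $2$. This reduces to counting the indices $0\le j\le(n^{r-1}-1)/d$ with $n^{i-1}\mid(4j+1)$, a residue count entirely analogous to the $\lfloor(n^{r-j}+1)/4\rfloor$ bookkeeping of Theorem~\ref{main-2}. A secondary point requiring care is to confirm that the specialisations $a=q^{\pm(4j+1)n}$ meet no vanishing denominator, so that the parametric congruence genuinely degenerates to the exact identities used above.
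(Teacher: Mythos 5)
Your proposal is correct and follows essentially the same route as the paper: specialising \eqref{eq:q-Long-gen} at $m=4$, $c=-q^{-1}$ (with the same cancellation $(q^2/c;q^4)_{(n-1)/4}=(cq^4;q^4)_{(n-1)/4}$ and the identity $(-1)^{(n-1)/4}=\bigl(\frac{-2}{n}\bigr)$), then building the $a$-parametric congruence modulo $[n^r]\prod_{j=0}^{(n^{r-1}-1)/d}(1-aq^{(4j+1)n})(a-q^{(4j+1)n})$ via evaluation at $a=q^{\pm(4j+1)n}$, and finally letting $a\to1$. The paper's own proof is only a sketch that stops at the parametric congruence, so your extra bookkeeping for the $a\to1$ power count and the non-vanishing of denominators is consistent with, and somewhat more explicit than, what is printed.
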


\begin{proof}[Sketch of proof]
This time we take $m=4$ and $c=-q^{-1}$ in \eqref{eq:q-Long-gen} to get
\begin{align*}
&\sum_{k=0}^{(n-1)/d}(-1)^k[8k+1]\frac{(aq,q/a,q,-q^2;q^4)_k}
{(aq^4,q^4/a,q^4,-q^3;q^4)_k} q^k \notag\\
&\qquad\equiv q^{(1-n)/2}[n]\bigg(\frac{-2}{n}\bigg) \pmod{\Phi_n(q)(1-aq^{n})(a-q^{n})} \quad\text{for $n\equiv 1\pmod{4}$},
\end{align*}
where $d=1,4$, and we use $(-1)^{(n-1)/4}=\big(\frac{-2}{n}\big)$ for $n\equiv 1\pmod{4}$.
Applying this $q$-congruence, we can produce a generalization of \eqref{q-f3} with an extra parameter~$a$:
modulo
$$
[n^r]\prod_{j=0}^{(n^{r-1}-1)/d}(1-aq^{(4j+1)n})(a-q^{(4j+1)n}),
$$
we have
\begin{align*}
&\sum_{k=0}^{(n^r-1)/d}(-1)^k[8k+1]\frac{(aq,q/a,q,-q^2;q^4)_k}
{(aq^4,q^4/a,q^4,-q^3;q^4)_k} q^k   \\
&\qquad \equiv q^{(1-n)/2}[n]\bigg(\frac{-2}{n}\bigg)
\sum_{k=0}^{(n^{r-1}-1)/d}(-1)^k[8k+1]_{q^n}  \frac{(aq^n,q^n/a,q^n,-q^{2n};q^{4n})_k}
{(aq^{4n},q^{4n}/a,q^{4n},-q^{3n};q^{2n})_k} q^{nk},
\end{align*}
where $d=1,4$.
\end{proof}

It is easy to see that, when $n=p$ and $q\to 1$, the $q$-supercongruence \eqref{q-f3} reduces to \eqref{f3} when $d=4$ , and
confirms the third supercongruence in \cite[Conjecture 5.3]{Gu19a} when $d=1$. Besides, letting $n=p^2$, $r\mapsto r/2$, and $q\to 1$ in \eqref{q-f3}
we obtain
\begin{align*}
\sum_{k=0}^{(p^r-1)/4}\frac{(8k+1)(\frac{1}{4})_k^3}{k!^3 (-1)^k }
\equiv p^2 \sum_{k=0}^{(p^{r-2}-1)/4} \frac{(8k+1)(\frac{1}{4})_k^3}{k!^3 (-1)^k}  \pmod{p^{2r}}
\end{align*}
for $r\ge 2$ even. This confirms in part the second case of \cite[Conjecture (F.3)]{Sw15}, where the supercongruence is predicted to hold modulo $p^{3r-2}$ for $p\equiv 3\pmod{4}$.

Finally, we should mention the recent work \cite{WY20}, which saw the light after a preliminary version of this work had appeared; there Wang and Yue
gave generalizations of Theorems~\ref{main-4}, ~\ref{main-e} and~\ref{main-f}.

\subsection{Generalizations of Swisher-type supercongruences}
The $m=3$ case of \cite[Conjecture 6.1]{Gu19c} asserts that
\begin{align}
\sum_{k=0}^{(p^r-1)/d}(-1)^k (4k+1)^3\frac{(\frac{1}{2})_k^3}{k!^3}
\equiv p\bigg(\frac{-1}{p}\bigg) \sum_{k=0}^{(p^{r-1}-1)/d}
(-1)^k(4k+1)^3\frac{(\frac{1}{2})_k^3}{k!^3} \pmod{p^{3r-2}},
\label{eq:guo-3}
\end{align}
where $d=1,2$. Here we confirm this supercongruence by establishing its $q$-analogue.
Although there is a $q$-analogue of \eqref{eq:guo-3} modulo $p^3$ for $r=1$ in~\cite{Gu19c}, we need a different one to accomplish the proof of~\eqref{eq:guo-3}.

\begin{lemma}\label{lem:new-1}
Let $n>1$ be an odd integer and $a$ an indeterminate. Then, modulo $\Phi_n(q^2)(1-aq^{2n})(a-q^{2n})$,
\begin{align}
&\sum_{k=0}^{(n-1)/2}(-1)^k [4k+1]_{q^2}[4k+1]^2 \frac{ (aq^2,q^2/a;q^4)_k (q^4;q^8)_k}{ (aq^4,q^4/a;q^4)_k (q^8;q^8)_k}q^{-4k} \notag\\
&\qquad
\equiv q^{1-n}[n]_{q^2}\bigg(\frac{-1}{n}\bigg) \left(1-\frac{(1+q^2)(1-aq^2)(1-q^2/a)}{(1+q^4)(1-q)^2}\right). \label{eq:four-1}
\end{align}
\end{lemma}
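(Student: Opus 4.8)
The statement to be proved is Lemma~\ref{lem:new-1}, an $r=1$ type $q$-congruence with the cubic factor $[4k+1]^2$ adjoined to the basic summand. The plan is to follow the same template that produced the earlier single-parameter $q$-congruences such as \eqref{qb2-new}, namely to verify the identity at the special values $a=q^{-n}$ and $a=q^n$ that make the two linear factors $(1-aq^{2n})$ and $(a-q^{2n})$ vanish, and to separately check divisibility by $\Phi_n(q^2)$. Since the three polynomials $\Phi_n(q^2)$, $1-aq^{2n}$, $a-q^{2n}$ are pairwise coprime (in the indeterminate $a$), establishing the congruence modulo each factor gives the result modulo their product.

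First I would treat the two linear factors. Setting $a=q^{\pm n}$, one has $(aq^2;q^4)_k(q^2/a;q^4)_k=(q^{2-n};q^4)_k(q^{2+n};q^4)_k$ (the two choices of sign give the same pair), which vanishes for $k>(n-1)/2$; thus the truncated left-hand side at $a=q^{\pm n}$ becomes a genuine closed sum to which a Karlsson--Minton-type or well-poised evaluation applies. The idea is to recognize the resulting ${}_5\phi_4$-type sum, including the weight $(-1)^k[4k+1]_{q^2}[4k+1]^2 q^{-4k}$, as a known terminating summation; simultaneously the right-hand side collapses because at $a=q^{\pm n}$ the bracketed factor $1-\frac{(1+q^2)(1-aq^2)(1-q^2/a)}{(1+q^4)(1-q)^2}$ simplifies, and one must check the two sides coincide exactly. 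The cubic factor $[4k+1]^2$ is what distinguishes this from \eqref{qb2-new}; it forces a more delicate evaluation than the plain well-poised sum, and I expect it is precisely the source of the correction term in the parenthesis on the right-hand side of \eqref{eq:four-1}.

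For divisibility by $\Phi_n(q^2)$, the strategy is the standard pairing of the term indexed by $k$ with the term indexed by $(n-1)/2-k$ (or an involution of that flavour) so that the sum modulo $\Phi_n(q^2)$ reduces to the expected right-hand side; here one uses that $q^{2n}\equiv1\pmod{\Phi_n(q^2)}$ and that $[4k+1]_{q^2}$ carries the well-poised symmetry. Because $n$ is odd, $[n]_{q^2}$ is coprime to the factors $1+q^{2j}$ appearing in the denominators, so the quotient is a bona fide element of the localized polynomial ring and the congruence makes sense.

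The hard part will be the exact evaluation of the $a=q^{\pm n}$ specializations in the presence of the squared factor $[4k+1]^2$: a direct well-poised summation handles the single factor $[4k+1]_{q^2}$, but the extra $[4k+1]^2$ breaks well-poisedness and produces the non-trivial rational correction $1-\frac{(1+q^2)(1-aq^2)(1-q^2/a)}{(1+q^4)(1-q)^2}$. I would obtain this by writing $[4k+1]^2$ as a $q$-linear combination of shifted summands (differentiating or applying the contiguous-relation trick to the underlying well-poised series), so that the whole sum telescopes into the base evaluation plus a lower-order piece, and then matching the lower-order piece against the stated correction factor. Once these specializations and the $\Phi_n(q^2)$ divisibility are in hand, the lemma follows by the coprimality argument described above.
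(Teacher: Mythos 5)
Your overall architecture is the right one and matches the paper's: verify the claimed identity at the zeros of the two linear factors, check divisibility by $\Phi_n(q^2)$ separately via the pairing $k\leftrightarrow (n-1)/2-k$ (the paper invokes \cite[Lemma 3.1]{GS20b} for exactly this, concluding both sides vanish modulo $\Phi_n(q^2)$ since the right-hand side carries the factor $[n]_{q^2}$), and conclude by coprimality. However, there are two problems. First, you have the specialization points wrong: the factor $1-aq^{2n}$ vanishes at $a=q^{-2n}$ and $a-q^{2n}$ at $a=q^{2n}$, not at $a=q^{\mp n}$. With your substitution $a=q^{\pm n}$ the Pochhammer symbols become $(q^{2\mp n};q^4)_k$, which for odd $n$ never vanish at integer $k$ (one would need $4j=n-2$), so the truncation you rely on does not occur and, more to the point, you are evaluating at values of $a$ that do not kill the modulus. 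With the correct substitution $a=q^{\pm 2n}$ one gets $(q^{2-2n};q^4)_k(q^{2+2n};q^4)_k$, which does vanish for $k>(n-1)/2$.

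Second, and more seriously, the crux of the lemma --- the closed-form evaluation at the specialization, including the rational correction factor --- is not actually established in your proposal; you only sketch a strategy (decomposing $[4k+1]^2$ via contiguous relations so the sum "telescopes into the base evaluation plus a lower-order piece"). The paper's proof does something concrete and different: at $a=q^{\pm2n}$ the left-hand side is recognized verbatim as an $_{8}\phi_{7}$ in base $q^4$ in which the square $[4k+1]^2$ is encoded by the repeated parameter pair $(q^5,q^5)$ over $(q,q)$; Watson's $_8\phi_7\to{}_4\phi_3$ transformation \cite[Appendix (III.18)]{GR04} then applies, and the resulting $_{4}\phi_{3}$ has top parameter $q^{-4}$, so it terminates after exactly two terms --- the second term is precisely the correction $-\frac{(1+q^2)(1-q^{2-2n})(1-q^{2+2n})}{(1+q^4)(1-q)^2}$. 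Without either carrying out your telescoping decomposition in full or supplying this transformation step, the identity \eqref{four-3} that drives the whole lemma remains unproved, so the argument as written has a genuine gap at its central step.
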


\begin{proof}
For $a=q^{-2n}$ or $a=q^{2n}$, the left-hand side of \eqref{eq:four-1} is equal to
\begin{align}
&\sum_{k=0}^{(n-1)/2}(-1)^k [4k+1]_{q^2}[4k+1]^2 \frac{(q^{2-2n},q^{2+2n};q^4)_k (q^4;q^8)_k}
{(q^{4-2n},q^{4+2n};q^4)_k (q^8;q^8)_k}q^{-4k} \notag\\
&\qquad
={} _{8}\phi_{7}\biggl[\begin{matrix}
q^2, \, q^5, \, -q^5, \, q^5, \, q^5, \, -q^2,  \, q^{2+2n}, \, q^{2-2n} \\[1pt]
q, \, -q, \, q, \, q, \, -q^4, \, q^{4-2n}, \, q^{4+2n}
\end{matrix}; q^4,\, -q^{-4} \biggr],
\label{four-2}
\end{align}
where the basic hypergeometric series $_{s+1}\phi_s$ is defined in the introduction.
By Watson's $_8\phi_7$ transformation formula \cite[Appendix (III.18)]{GR04} with $q\mapsto q^4$, $a=q^2$, $b=c=q^5$, $d=-q^2$, $e=q^{2+2n}$ and $n\mapsto(n-1)/2$,
we can write the right-hand side of~\eqref{four-2} as
\begin{align}
&\frac{ (q^6,-q^{2-2n};q^4)_{(n-1)/2}}{(-q^4,q^{4-2n};q^4)_{(n-1)/2}}
\,{}_{4}\phi_{3}\biggl[\begin{matrix}
q^{-4}, \, -q^2, \, q^{2+2n}, \, q^{2-2n} \\[1pt]
q, \, q, \, -q^4
\end{matrix};q^4,\, q^4 \biggr]  \notag\\
&\qquad
=q^{1-n}[n]_{q^2}\bigg(\frac{-1}{n}\bigg)
\left(1-\frac{(1+q^2)(1-q^{2-2n})(1-q^{2+2n})}{(1+q^4)(1-q)^2}\right),  \label{four-3}
\end{align}
which is just the $a=q^{-2n}$ or $a=q^{2n}$ case of the right-hand side of \eqref{eq:four-1}.
This proves that the congruence \eqref{eq:four-1} holds modulo $1-aq^{2n}$ or $a-q^{2n}$.

Moreover, by \cite[Lemma 3.1]{GS20b} it is easy to verify that, for $0\le k\le (n-1)/2$, the $k$-th and $((n-1)/2-k)$-th terms on the left-hand side of \eqref{eq:four-1}
modulo $\Phi_n(q^2)$ cancel each other. Therefore, the left-hand side of \eqref{eq:four-1} is congruent to $0$ modulo
$\Phi_n(q^2)$, and \eqref{eq:four-1} is also true modulo $\Phi_n(q^2)$.
\end{proof}

We are now able to give a complicated $q$-analogue of \eqref{eq:guo-3}.

\begin{theorem}\label{new-1}
Let $n>1$ be an odd integer and $r\ge 2$. Then, modulo
\begin{align*}
\begin{cases}
[n^r]_{q^2}\Phi_n(-q)^2\prod_{j=2}^r\Phi_{n^j}(q^2)^2 &\text{if $n>3$,} \\[5pt]
[n^r]_{q^2}\Phi_n(q^2)\Phi_{n^2}(q^2)\Phi_n(-q)\Phi_{n^2}(-q)\prod_{j=3}^r\Phi_{n^j}(q^2)^2 &\text{if $n=3$,}
\end{cases}
\end{align*}
we have
\begin{align}
&\sum_{k=0}^{(n^r-1)/d}(-1)^k [4k+1]_{q^2}[4k+1]^2 \frac{(q^2;q^4)_k^2 (q^4;q^8)_k }{ (q^4;q^4)_k^2 (q^8;q^8)_k}q^{-4k}  \notag\\
&\qquad \equiv q^{2-2n}[n]_{q^2}\bigg(\frac{-1}{n}\bigg) \frac{(1+q+q^2)(1+q^{4n})}{(1+q^4)(1+q^n+q^{2n})}  \notag\\
&\qquad\quad\times \sum_{k=0}^{(n^{r-1}-1)/d}(-1)^k [4k+1]_{q^{2n}}[4k+1]_{q^n}^2 \frac{(q^{2n};q^{4n})_k^2 (q^{4n};q^{8n})_k }{ (q^{4n};q^{4n})_k^2 (q^{8n};q^{8n})_k}q^{-4nk} , \label{new-1-1}
\end{align}
where $d=1,2$.
\end{theorem}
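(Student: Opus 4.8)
The plan is to run the creative-microscoping scheme of Theorems~\ref{main-1}--\ref{main-5} once more, now with Lemma~\ref{lem:new-1} as the $r=1$ seed. Writing
\[
E_q(a)=1-\frac{(1+q^2)(1-aq^2)(1-q^2/a)}{(1+q^4)(1-q)^2}
\]
for the evaluation factor occurring in~\eqref{eq:four-1}, I would first reinstate the parameter $a$ and establish a parametric refinement of~\eqref{new-1-1}: modulo
\[
[n^r]_{q^2}\prod_{j=0}^{(n^{r-1}-1)/d}(1-aq^{(4j+2)n})(a-q^{(4j+2)n}),
\]
the sum
\[
\sum_{k=0}^{(n^r-1)/d}(-1)^k[4k+1]_{q^2}[4k+1]^2\frac{(aq^2;q^4)_k(q^2/a;q^4)_k(q^4;q^8)_k}{(aq^4;q^4)_k(q^4/a;q^4)_k(q^8;q^8)_k}q^{-4k}
\]
is congruent to $P(a)=q^{1-n}[n]_{q^2}\bigl(\frac{-1}{n}\bigr)\,E_q(a)/E_{q^n}(a)$ times the analogous inner sum with $q$ replaced by $q^n$. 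The point of the ratio $E_q(a)/E_{q^n}(a)$ is that its limit as $a\to1$ reproduces the complicated prefactor of~\eqref{new-1-1}, namely $q^{2-2n}[n]_{q^2}\bigl(\frac{-1}{n}\bigr)\frac{(1+q+q^2)(1+q^{4n})}{(1+q^4)(1+q^n+q^{2n})}$, since $E_q(1)=\frac{-2q(1+q+q^2)}{1+q^4}$.

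To verify the refinement modulo the $a$-product, I would specialize $a=q^{\pm(4j+2)n}$ for each $j$ in range. At such a value one of the factors $(aq^2;q^4)_k$, $(q^2/a;q^4)_k$ truncates the outer sum at $k=((2j+1)n-1)/2$, so Lemma~\ref{lem:new-1} with $n$ replaced by $(2j+1)n$ evaluates the left-hand side in closed form, while the same lemma with $q\mapsto q^n$ evaluates the inner sum, which now terminates at $k=j$. Using $[(2j+1)n]_{q^2}=[n]_{q^2}[2j+1]_{q^{2n}}$ and the multiplicativity of the Kronecker symbol, the two closed forms are seen to coincide precisely because the factor $E_{q^n}\bigl(q^{(4j+2)n}\bigr)$ produced by the inner evaluation cancels the denominator of $P(a)$; this is the same bookkeeping as in the proofs of Theorems~\ref{main-1gen} and~\ref{main-2-par}, now carrying the extra rational factors. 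For the complementary modulus $[n^r]_{q^2}$ I would show that both sides are divisible by it: the left-hand side by the term-pairing of Lemma~\ref{lem:new-1}, whereby the $k$-th and complementary terms cancel modulo each cyclotomic factor $\Phi_{n^j}(q^2)$ of $[n^r]_{q^2}$; and the right-hand side because $P(a)$ carries the factor $[n]_{q^2}$ while the inner sum is divisible by $[n^{r-1}]_{q^{2n}}$, and $[n]_{q^2}[n^{r-1}]_{q^{2n}}=[n^r]_{q^2}$. As $[n^r]_{q^2}$ and the $a$-product are coprime, the refinement follows.

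Letting $a\to1$ then gives~\eqref{new-1-1}, and this is where the real work lies. The limit of the $a$-product contributes squares $(1-q^{(4j+2)n})^2$, whose cyclotomic content I would count as in Theorem~\ref{main-1}; the denominators $(aq^4;q^4)_k(q^4/a;q^4)_k$ on the left reabsorb part of it, and one further copy of each $\Phi_{n^j}(q^2)$ comes from $[n^r]_{q^2}$. What breaks the clean pattern is the surviving prefactor, whose numerator contains $1+q+q^2=\Phi_3(q)$ and whose denominator contains $1+q^n+q^{2n}$. For $n>3$ the latter is coprime to the polynomials $\Phi_{n^j}(q^2)$, but $\Phi_3(q)$ combined with the asymmetry in the factorisation $\Phi_n(q^2)=\Phi_n(q)\Phi_n(-q)$ (valid for odd $n>1$) lowers the attainable power of $\Phi_n(q)$ at the first level, leaving $\Phi_n(-q)^2$ there instead of $\Phi_n(q^2)^2$. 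The main obstacle is the exceptional case $n=3$: here $1+q^n+q^{2n}=1+q^3+q^6=\Phi_9(q)=\Phi_{n^2}(q)$, so the denominator of the prefactor now also meets the $j=2$ level, cancelling one factor of $\Phi_{n^2}(q)$ and forcing the weaker, separately stated modulus. Performing this honest power-counting of $\Phi_3(q)$ against $\Phi_9(q)$, and of $\Phi_n(q)$ against $\Phi_n(-q)$ at every level, is the delicate step I expect to demand the most care.
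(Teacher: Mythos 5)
Your proposal follows essentially the same route as the paper's own (sketched) proof: the same parametric refinement with prefactor $q^{1-n}[n]_{q^2}\bigl(\frac{-1}{n}\bigr)E_q(a)/E_{q^n}(a)$, the same evaluations at $a=q^{\pm(4j+2)n}$ via Lemma~\ref{lem:new-1} with $n\mapsto(2j+1)n$, and the same cyclotomic bookkeeping driven by the identity $(1+q^{4n})(1-q^n)^2-(1+q^{2n})(1-q^{2n})^2=-2q^n(1+q^n+q^{2n})(1-q^n)^2$ (so the two lost powers of $\Phi_n(q)$ at level one come from the $(1-q^n)^2$ factor, and $n=3$ is exceptional because $1+q^3+q^6=\Phi_9(q)$ reaches level two). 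The only ingredient you leave implicit that the paper makes explicit is a separate evaluation of both sides modulo $\Phi_n(q^2)^3$, needed to recover the factor $\Phi_n(q^2)$ (rather than merely $\Phi_n(-q)$) at the first level in the $n=3$ case.
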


\begin{proof}[Sketch of proof]
Applying \eqref{eq:four-1}, we can prove the following parametric version of \eqref{new-1-1}:
modulo
\begin{align}
[n^r]_{q^2}\prod_{j=0}^{(n^{r-1}-1)/d}(1-aq^{(4j+2)n})(a-q^{(4j+2)n}),  \label{eq:prod-new0}
\end{align}
we have
\begin{align}
&\sum_{k=0}^{(n^r-1)/d}(-1)^k [4k+1]_{q^2}[4k+1]^2 \frac{ (aq^2,q^2/a;q^4)_k (q^4;q^8)_k}{ (aq^4,q^4/a;q^4)_k (q^8;q^8)_k}q^{-4k}   \notag
\displaybreak[2]\\
&\qquad \equiv q^{1-n}[n]_{q^2}\bigg(\frac{-1}{n}\bigg)
\left(1-\frac{(1+q^2)(1-aq^2)(1-q^2/a)}{(1+q^4)(1-q)^2}\right)  \notag\\
&\qquad\quad\times\left(1-\frac{(1+q^{2n})(1-aq^{2n})(1-q^{2n}/a)}{(1+q^{4n})(1-q^n)^2}\right)^{-1}  \notag\\
&\qquad\quad\times \sum_{k=0}^{(n^{r-1}-1)/d}(-1)^k [4k+1]_{q^{2n}}[4k+1]_{q^n}^2
\frac{ (aq^{2n},q^{2n}/a;q^{4n})_k (q^{4n};q^{8n})_k}{ (aq^{4n},q^{4n}/a;q^{4n})_k (q^{8n};q^{8n})_k}q^{-4nk},  \label{new-1-2}
\end{align}
where $d=1,2$.

Similarly as before, the limit of \eqref{eq:prod-new0} as $a\to 1$ has the factor
\begin{align}
\begin{cases}
\prod_{j=1}^r\Phi_{n^j}(q^2)^{2n^{r-j}+1} &\text{if $d=1$,}\\[5pt]
\prod_{j=1}^r\Phi_{n^j}(q^2)^{n^{r-j}+2} &\text{if $d=2$,}
\end{cases}  \label{eq:new-last}
\end{align}
where one product $\prod_{j=1}^r\Phi_{n^j}(q^2)$ comes from $[n^r]_{q^2}$. However, this time we should be careful of the factor related to $a$ in the common denominator of the two sides of
\eqref{new-1-2}. But it is at most
$$
\big((1+q^{4n})(1-q^n)^2-(1+q^{2n})(1-aq^{2n})(1-q^{2n}/a)\big)(aq^4;q^4)_{(n^r-1)/d} (q^4/a;q^4)_{(n^r-1)/d},
$$
of which the limit as $a\to 1$ only contains the factor
\begin{align*}
\begin{cases}
\Phi_n(q)^2\prod_{j=1}^r\Phi_{n^j}(q^2)^{2n^{r-j}-2} &\text{if $d=1$ and $n>3$,}\\[5pt]
\Phi_n(q)^2\prod_{j=1}^r\Phi_{n^j}(q^2)^{n^{r-j}-1} &\text{if $d=2$ and $n>3$,}\\[5pt]
\Phi_n(q)^2\Phi_{n^2}(q)\prod_{j=1}^r\Phi_{n^j}(q^2)^{2n^{r-j}-2} &\text{if $d=1$ and $n=3$,}\\[5pt]
\Phi_n(q)^2\Phi_{n^2}(q)\prod_{j=1}^r\Phi_{n^j}(q^2)^{n^{r-j}-1} &\text{if $d=2$ and $n=3$,}
\end{cases}
\end{align*}
related to $\Phi_n(q^2),\Phi_{n^2}(q^2),\ldots,\Phi_{n^r}(q^2)$. Here we used the identity
\begin{align}
(1+q^{4n})(1-q^n)^2-(1+q^{2n})(1-q^{2n})^2=-2q^n(1+q^n+q^{2n})(1-q^n)^2.  \label{eq:id}
\end{align}
Thus, letting $a\to 1$ in \eqref{new-1-2}, we see that the $q$-congruence \eqref{new-1-1}
holds modulo
\begin{align*}
\begin{cases}
\Phi_n(q^2)\Phi_n(-q)^2\prod_{j=2}^r\Phi_{n^j}(q^2)^3 &\text{if $n>3$,}\\[5pt]
\Phi_n(q^2)\Phi_{n^2}(q^2)^2\Phi_n(-q)^2\Phi_{n^2}(-q)\prod_{j=3}^r\Phi_{n^j}(q^2)^3 &\text{if $n=3$}.
\end{cases}
\end{align*}

On the other hand, letting $a\to 1$ in \eqref{eq:four-1}, we can easily deduce that the
left-hand side of \eqref{new-1-1} is congruent to
$$
-2q^{2-n}[n]_{q^2}\bigg(\frac{-1}{n}\bigg)\frac{1+q+q^2}{1+q^4} \pmod{\Phi_n(q^2)^3 },
$$
which indicates that it is congruent to 0 modulo $\Phi_n(q)^2$ when $n=3$.
Namely, the $q$-congruence \eqref{new-1-1} holds modulo $\Phi_n(q)^2$ when $n=3$.
Combining this with the previous argument, we conclude that
the $q$-congruence \eqref{new-1-1} is true modulo
\begin{align*}
\begin{cases}
\Phi_n(q^2)\Phi_n(-q)^2\prod_{j=2}^r\Phi_{n^j}(q^2)^3 &\text{if $n>3$,}\\[5pt]
\Phi_n(q^2)^2\Phi_{n^2}(q^2)^2\Phi_n(-q)\Phi_{n^2}(-q)\prod_{j=3}^r\Phi_{n^j}(q^2)^3 &\text{if $n=3$}.
\end{cases}
\end{align*}

Furthermore, based on \eqref{eq:four-1}, along the lines of the proof of \cite[Theorem 1.2]{GZ19a} we can show that
\begin{align}
\sum_{k=0}^{(n-1)/d}(-1)^k [4k+1]_{q^2}[4k+1]^2 \frac{(q^2;q^4)_k^2 (q^4;q^8)_k }{ (q^4;q^4)_k^2 (q^8;q^8)_k}q^{-4k}
\equiv 0\pmod{[n]_{q^2}} \label{new-1-3}
\end{align}
for $d=1,2$. Utilizing this $q$-congruence, we can prove that both sides of \eqref{new-1-1}
are congruent to $0$ modulo $[n^r]_{q^2}$.
\end{proof}

It is not hard to see that, when $n=p$ and $q\to 1$, the $q$-supercongruence \eqref{new-1-1}
reduces to \eqref{eq:guo-3} for $r\ge 2$  (the case $r=1$ of \eqref{eq:guo-3} is obviously true by \cite{Gu19c} or \eqref{new-1-3}).
Moreover, letting $n=p$ and $q\to -1$ in \eqref{new-1-1},
we are led to \eqref{eq:b3-new-1} again.

Similarly, we can partially confirm another conjecture in
\cite{Gu19c}. Recall that the $m=3$ case of \cite[Conjecture 6.2]{Gu19c} may be stated as follows:
\begin{align}
\sum_{k=0}^{(p^r-1)/d} (4k+1)^3\frac{(\frac{1}{2})_k^4}{k!^4} \equiv
p \sum_{k=0}^{(p^{r-1}-1)/d} (4k+1)^3\frac{(\frac{1}{2})_k^4}{k!^4}
\pmod{p^{4r-3}}, \label{eq:guo-4}
\end{align}
where $d=1,2$. Here we prove that \eqref{eq:guo-4} is true
modulo $p^{3r-2}$ using the following $q$-supercongruences.

\begin{theorem}\label{new-1}
Let $n>1$ be an odd integer and $r\ge 2$. Then, modulo
\begin{align*}
[n^r]_{q^2}\Phi_n(-q)^2\prod_{j=2}^r\Phi_{n^j}(q^2)^2,
\end{align*}
we have
\begin{align}
&\sum_{k=0}^{(n^r-1)/d}[4k+1]_{q^2}[4k+1]^2 \frac{(q^2;q^4)_k^4}{(q^4;q^4)_k^4}q^{-4k}\notag\\
&\qquad \equiv q^{2-2n}[n]_{q^2}\frac{1+q^{2n}}{1+q^2}
\sum_{k=0}^{(n^{r-1}-1)/d}[4k+1]_{q^{2n}}[4k+1]_{q^n}^2 \frac{(q^{2n};q^{4n})_k^4}{ (q^{4n};q^{4n})_k^4}q^{-4nk}
, \label{new-2-1}
\end{align}
where $d=1,2$.
\end{theorem}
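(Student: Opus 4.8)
The plan is to mirror the proof of the preceding theorem (the one establishing \eqref{new-1-1}), since \eqref{eq:guo-4} is the fourth-power analogue of \eqref{eq:guo-3}: the weight $\frac{(q^2;q^4)_k^2(q^4;q^8)_k}{(q^4;q^4)_k^2(q^8;q^8)_k}$ is replaced by the perfect fourth power $\frac{(q^2;q^4)_k^4}{(q^4;q^4)_k^4}$ and the sign $(-1)^k$ is dropped. I would work throughout with the one-parameter deformation
\begin{align*}
\sum_{k=0}^{(n^r-1)/d}[4k+1]_{q^2}[4k+1]^2\frac{(aq^2;q^4)_k(q^2/a;q^4)_k(q^2;q^4)_k^2}{(aq^4;q^4)_k(q^4/a;q^4)_k(q^4;q^4)_k^2}q^{-4k},
\end{align*}
whose $a\to1$ limit is the left-hand side of \eqref{new-2-1}, and establish a parametric $q$-congruence for it against the correspondingly deformed right-hand side, modulo $[n^r]_{q^2}\prod_{j=0}^{(n^{r-1}-1)/d}(1-aq^{(4j+2)n})(a-q^{(4j+2)n})$.

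The crux is an $r=1$ parametric lemma playing the role of Lemma~\ref{lem:new-1}: a closed-form evaluation, modulo $\Phi_n(q^2)(1-aq^{2n})(a-q^{2n})$, of the truncated sum $\sum_{k=0}^{(n-1)/2}$ of the deformed summand, equal to $q^{1-n}[n]_{q^2}$ times an explicit rational function of $a$. As for \eqref{eq:four-1}, I would prove this by two independent checks. For the divisibility by $1-aq^{2n}$ and $a-q^{2n}$, the substitutions $a=q^{\pm2n}$ terminate the sum, and I would recast it as a terminating very-well-poised basic hypergeometric series and collapse it by a Watson-type $_8\phi_7$ transformation \cite[Appendix~(III.18)]{GR04} to a summable lower series. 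I expect this to be the main obstacle: because the weight is now a fourth power, the relevant very-well-poised series carries an extra pair of parameters compared with the $_8\phi_7$ of Lemma~\ref{lem:new-1}, so identifying the correct transformation (possibly a chain, or a specialization of a known evaluation from \cite{GZ19a,GS20b}) and extracting the exact closed form demands more care. For the divisibility by $\Phi_n(q^2)$, I would pair the $k$-th and $((n-1)/2-k)$-th summands and show they cancel modulo $\Phi_n(q^2)$ via \cite[Lemma~3.1]{GS20b}.

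With the lemma in hand, the creative-microscoping lift proceeds exactly as for \eqref{new-1-2}. The modulus $[n^r]_{q^2}$ comes from the fourth-power analogue of \eqref{new-1-3}, namely
\begin{align*}
\sum_{k=0}^{(n-1)/d}[4k+1]_{q^2}[4k+1]^2\frac{(q^2;q^4)_k^4}{(q^4;q^4)_k^4}q^{-4k}\equiv0\pmod{[n]_{q^2}},
\end{align*}
which itself follows from the $a\to1$ case of the lemma together with the argument of \cite[Theorem~1.2]{GZ19a}; applied with $n\mapsto n^r$ on the left and $q\mapsto q^n$, $r\mapsto r-1$ on the right it kills both sides modulo $[n^r]_{q^2}$. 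The product factor $\prod_j(1-aq^{(4j+2)n})(a-q^{(4j+2)n})$ is handled by verifying that the two sides of the parametric identity coincide at each $a=q^{\pm(4j+2)n}$, where the specialized sums are evaluated through the $r=1$ lemma.

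Finally I would let $a\to1$ and carry out the power count of cyclotomic factors, which, because the target modulus coincides with that of \eqref{new-1-1}, should follow the computation already performed there. Writing $\Phi_n(q^2)=\Phi_n(q)\Phi_n(-q)$ for odd $n$, I would track the multiplicities arising from the numerator of \eqref{eq:prod-new} (as in \eqref{eq:new-last}), from the $a$-dependent part of the common denominator, and from the rational multiplier, using the identity \eqref{eq:id} to read off the vanishing order at $a=1$. The decisive point is that at the first level only $\Phi_n(-q)^2$ survives, its companion $\Phi_n(q)^2$ being absorbed into the denominator; this produces exactly the deficiency recorded by the exponent $3r-2$ in \eqref{eq:guo-4}, and since the multiplier $\frac{1+q^{2n}}{1+q^2}$ carries no $\Phi_n(q)$ factor at $n=3$, no separate $n=3$ analysis is required. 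Combining the cyclotomic modulus with the $[n^r]_{q^2}$ congruence and checking coprimality of the two factors then yields the stated modulus $[n^r]_{q^2}\Phi_n(-q)^2\prod_{j=2}^r\Phi_{n^j}(q^2)^2$; setting $q\to1$, $n=p$ recovers \eqref{eq:guo-4} modulo $p^{3r-2}$.
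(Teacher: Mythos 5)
Your proposal follows essentially the same route as the paper: a one-parameter deformation, an $r=1$ closed-form lemma modulo $\Phi_n(q^2)(1-aq^{2n})(a-q^{2n})$, the microscoping lift modulo \eqref{eq:prod-new}, and the cyclotomic factor count at $a\to1$. The only differences are cosmetic: the $r=1$ lemma you flag as the main obstacle is simply quoted from \cite[Theorem 4.1]{Gu19c} rather than re-derived via Watson's transformation, and the factorization used at $a=1$ is $(1+q^{2n})(1-q^n)^2-(1-q^{2n})^2=-2q^n(1-q^n)^2$ rather than \eqref{eq:id}, which is precisely why no separate $n=3$ case arises.
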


\begin{proof}[Sketch of proof]
By \cite[Theorem 4.1]{Gu19c}, we have
\begin{align*}
&\sum_{k=0}^{(n-1)/2}[4k+1]_{q^2}[4k+1]^2 \frac{(aq^2,q^2/a;q^4)_k(q^2;q^4)_k^2}{(aq^4,q^4/a;q^4)_k(q^4;q^4)_k^2}q^{-4k} \notag\\
&\qquad\equiv q^{1-n}[n]_{q^2} \left(1-\frac{(1-aq^2)(1-q^2/a)}{(1+q^2)(1-q)^2}\right) \pmod{\Phi_n(q^2)(1-aq^{2n})(a-q^{2n})}.
\end{align*}
Using this $q$-congruence, we can establish the following parametric generalization of \eqref{new-2-1}:
modulo
\begin{align}
[n^r]_{q^2}\prod_{j=0}^{(n^{r-1}-1)/d}(1-aq^{(4j+2)n})(a-q^{(4j+2)n}),  \label{eq:prod-new1}
\end{align}
we have
\begin{align}
&\sum_{k=0}^{(n^r-1)/d}[4k+1]_{q^2}[4k+1]^2 \frac{(aq^2,q^2/a;q^4)_k(q^2;q^4)_k^2}{(aq^4,q^4/a;q^4)_k(q^4;q^4)_k^2}q^{-4k}  \notag\\
&\qquad
\equiv q^{1-n}[n]_{q^2} \left(1-\frac{(1-aq^2)(1-q^2/a)}{(1+q^2)(1-q)^2}\right)\left(1-\frac{(1-aq^{2n})(1-q^{2n}/a)}{(1+q^{2n})(1-q^n)^2}\right)^{-1} \notag\\
&\qquad\quad\times \sum_{k=0}^{(n^{r-1}-1)/d} [4k+1]_{q^{2n}}[4k+1]_{q^n}^2
\frac{ (aq^{2n},q^{2n}/a;q^{4n})_k (q^{2n};q^{4n})_k^2}{(aq^{4n},q^{4n}/a;q^{4n})_k (q^{4n};q^{4n})_k^2}q^{-4nk}.  \label{new-2-2}
\end{align}

Like before, the limit of \eqref{eq:prod-new1} as $a\to 1$ has the factor \eqref{eq:new-last}.
While the factor related to $a$ in the common denominator of the two sides of
\eqref{new-2-2} is at most
$$
\big((1+q^{2n})(1-q^n)^2-(1-aq^{2n})(1-q^{2n}/a)\big)(aq^4;q^4)_{(n^r-1)/d} (q^4/a;q^4)_{(n^r-1)/d},
$$
whose limit as $a\to 1$ only incorporates the factor
\begin{align*}
\begin{cases}
\Phi_n(q)^2\prod_{j=1}^r\Phi_{n^j}(q^2)^{2n^{r-j}-2} &\text{if $d=1$,}\\[5pt]
\Phi_n(q)^2\prod_{j=1}^r\Phi_{n^j}(q^2)^{n^{r-j}-1} &\text{if $d=2$,}
\end{cases}
\end{align*}
related to $\Phi_n(q^2),\Phi_{n^2}(q^2),\ldots,\Phi_{n^r}(q^2)$. Here we utilized the relation
$$
(1+q^{2n})(1-q^n)^2-(1-q^{2n})^2=-2q^n(1-q^n)^2.
$$
Thus, taking the limit of \eqref{new-2-2} as $a\to 1$, we see that the $q$-congruence \eqref{new-2-1}
holds modulo
$\Phi_n(q^2)\Phi_n(-q)^2\prod_{j=2}^r\Phi_{n^j}(q^2)^3$.
Finally, to show that both sides of \eqref{new-2-1}
are also congruent to $0$ modulo $[n^r]_{q^2}$, we only need to use the modulus $[n]_{q^2}$
case of \cite[Theorem 1.4]{Gu19c}.
\end{proof}

It is clear that, when $n=p$ and $q\to 1$, the $q$-supercongruence \eqref{new-2-1}
becomes the modulus $p^{3r-2}$ case of \eqref{eq:guo-4}. Meanwhile, taking $n=p$ and $q\to -1$ in \eqref{new-2-1},
we obtain the modulus $p^{3r}$ case of (C.3) from \cite{Sw15}:
\begin{align*}
\sum_{k=0}^{(p^r-1)/2} (4k+1)\frac{(\frac{1}{2})_k^4}{k!^4}
&\equiv p \sum_{k=0}^{(p^{r-1}-1)/2} (4k+1)\frac{(\frac{1}{2})_k^4}{k!^4} \pmod{p^{3r}}
\end{align*}
and its companion, already proved by the first author in~\cite{Gu20d}.

\subsection{Dwork-type supercongruences involving $(4k-1)$ and $(4k-1)^3$}
The first author \cite[Corollary 5.2]{Gu19c} proved that, for $r\ge 1$,
\begin{align*}
\sum_{k=0}^{(p^r+1)/2} (4k-1)^3\frac{(-\frac{1}{2})_k^3}{k!^3}
&\equiv 3p^r \bigg(\frac{-1}{p^r}\bigg) \pmod{p^{r+2}},  \\
\sum_{k=0}^{p^r-1} (4k-1)^3\frac{(-\frac{1}{2})_k^3}{k!^3}
&\equiv 3p^r\bigg(\frac{-1}{p^r}\bigg) \pmod{p^{r+2}}.
\end{align*}
We observe that these two supercongruences also possess the following Dwork-type generalizations:
\begin{align}
\sum_{k=0}^{(p^r+1)/2} (4k-1)^3\frac{(-\frac{1}{2})_k^3}{k!^3}
&\equiv p\bigg(\frac{-1}{p}\bigg)
\sum_{k=0}^{(p^{r-1}+1)/2} (4k-1)^3\frac{(-\frac{1}{2})_k^3}{k!^3} \pmod{p^{3r-2}},  \label{4k-1-1}\\
\sum_{k=0}^{p^r-1} (4k-1)^3\frac{(-\frac{1}{2})_k^3}{k!^3}
&\equiv p\bigg(\frac{-1}{p}\bigg)
\sum_{k=0}^{p^{r-1}-1} (4k-1)^3\frac{(-\frac{1}{2})_k^3}{k!^3} \pmod{p^{3r-2}}.   \label{4k-1-2}
\end{align}
In fact, these two supercongruences can be further generalized to the $q$-setting. We first give the following result similar
to Lemma \ref{lem:new-1}.

\begin{lemma}\label{lem:new-2}
Let $n>1$ be an odd integer and $a$ an indeterminate. Then, modulo $\Phi_n(q^2)(1-aq^{2n})(a-q^{2n})$,
\begin{align}
&\sum_{k=0}^{(n+1)/2}(-1)^k [4k-1]_{q^2}[4k-1]^2 \frac{ (aq^{-2},q^{-2}/a;q^4)_k (q^{-4};q^8)_k}{(aq^4,q^4/a;q^4)_k (q^8;q^8)_k}q^{4k} \notag\\
&\quad
\equiv \frac{-2q^{-n-3}[n]_{q^2}(1+q^4)}{(1+aq^2)(1+q^2/a)}\bigg(\frac{-1}{n}\bigg)\left(1-\frac{(1+q^2)(1-aq^{-2})(1-q^{-2}/a)}{(1+q^4)(1-q)^2}q^4\right). \label{eq:lem-4k-1}
\end{align}
\end{lemma}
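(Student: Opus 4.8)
The plan is to mirror, \emph{mutatis mutandis}, the proof of Lemma~\ref{lem:new-1}, first pinning both sides of \eqref{eq:lem-4k-1} at the special values $a=q^{\pm2n}$ (which handles the factors $(1-aq^{2n})$ and $(a-q^{2n})$) and then dealing with the cyclotomic factor $\Phi_n(q^2)$. First I would specialise $a=q^{2n}$, the case $a=q^{-2n}$ being identical by the $a\leftrightarrow 1/a$ symmetry of the summand. With $a=q^{2n}$ the numerator factor $(q^{-2}/a;q^4)_k=(q^{-2-2n};q^4)_k$ vanishes for $k>(n+1)/2$, which is exactly why the sum is truncated at $(n+1)/2$; thus the left-hand side of \eqref{eq:lem-4k-1} becomes a \emph{terminating} sum to which the $q$-hypergeometric machinery applies.

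The key step is to recognise this terminating sum as a very-well-poised series. Writing $[4k-1]_{q^2}=-q^{-2}\,\dfrac{1-q^{-2}q^{8k}}{1-q^{-2}}$ exhibits $[4k-1]_{q^2}$, up to the constant $-q^{-2}$, as the very-well-poised factor with leading parameter $A=q^{-2}$ and base $q^4$; the two equal parameters $q^3$ (with partners $q^{-1}$) produce $[4k-1]^2$ up to a constant, the parameter $-q^{-2}$ accounts for $(q^{-4};q^8)_k/(q^8;q^8)_k$ via $(x;q^4)_k(-x;q^4)_k=(x^2;q^8)_k$, and the pair $aq^{-2},\,q^{-2}/a$ completes the profile. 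Concretely, at $a=q^{2n}$ the sum equals a constant multiple of
\[
{}_8\phi_7\!\biggl[\begin{matrix}
q^{-2},\,q^3,\,-q^3,\,q^3,\,q^3,\,-q^{-2},\,q^{2n-2},\,q^{-2-2n}\\[2pt]
q^{-1},\,-q^{-1},\,q^{-1},\,q^{-1},\,-q^4,\,q^{4-2n},\,q^{4+2n}
\end{matrix};\,q^4,\,-q^4\biggr],
\]
where the explicit $(-1)^kq^{4k}=(-q^4)^k$ identifies the argument, and indeed $-q^4=A^2q^8/(bcdef)$ is precisely the balance condition required by Watson's transformation.

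Next I would apply Watson's $_8\phi_7\to{}_4\phi_3$ transformation \cite[Appendix (III.18)]{GR04}. As in Lemma~\ref{lem:new-1}, the resulting ${}_4\phi_3$ carries an upper parameter $q^{-4}$, hence terminates after two terms and collapses to a short explicit expression; matching it against the bracketed factor $\bigl(1-\tfrac{(1+q^2)(1-aq^{-2})(1-q^{-2}/a)}{(1+q^4)(1-q)^2}q^4\bigr)$ evaluated at $a=q^{2n}$ is then a direct two-term check, establishing \eqref{eq:lem-4k-1} modulo $(1-aq^{2n})$ and $(a-q^{2n})$, and hence modulo their product. I expect the main obstacle to live here: not the two-term ${}_4\phi_3$ sum, but the bookkeeping of the overall monomial and sign constants and, above all, the simplification of Watson's Pochhammer prefactor into the rational factor $\dfrac{-2q^{-n-3}(1+q^4)}{(1+aq^2)(1+q^2/a)}$. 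The denominator $(1+aq^2)(1+q^2/a)$, absent in Lemma~\ref{lem:new-1}, should emerge from the prefactor quotient tied to the parameter $-q^{-2}$ together with the pair $q^{2n-2},\,q^{-2-2n}$, where the minus sign converts differences into the sums $1+aq^2$ and $1+q^2/a$; verifying this cleanly is the delicate part of the argument.

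Finally, for the modulus $\Phi_n(q^2)$ I would argue exactly as in Lemma~\ref{lem:new-1}. The entire right-hand side of \eqref{eq:lem-4k-1} carries the factor $[n]_{q^2}$, and since $\Phi_n(q^2)\mid[n]_{q^2}$ for $n>1$ while the remaining rational factors are coprime to $\Phi_n(q^2)$, the right-hand side vanishes modulo $\Phi_n(q^2)$. It then suffices to show the left-hand side does too, which follows by the same term-pairing argument: grouping the summand at index $k$ with its symmetric partner and invoking \cite[Lemma~3.1]{GS20b} shows that the paired contributions cancel modulo $\Phi_n(q^2)$. As $\Phi_n(q^2)$ is coprime to $(1-aq^{2n})(a-q^{2n})$, combining the two parts completes the proof of \eqref{eq:lem-4k-1}.
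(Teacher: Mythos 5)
Your proposal matches the paper's proof essentially step for step: specialisation at $a=q^{\pm2n}$, identification of the terminating sum as $-q^{-4}$ times the very-well-poised ${}_8\phi_7$ with argument $-q^4$, Watson's transformation \cite[Appendix (III.18)]{GR04} down to a ${}_4\phi_3$ with upper parameter $q^{-4}$ that collapses to two terms, and a vanishing argument for the left-hand side modulo $\Phi_n(q^2)$. The only cosmetic difference is that for the $\Phi_n(q^2)$ step the paper invokes \cite[eq.~(5.3)]{Gu19c} with $q\mapsto q^2$ rather than the term-pairing via \cite[Lemma~3.1]{GS20b} that you carry over from Lemma~\ref{lem:new-1}, but the underlying cancellation idea is the same.
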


\begin{proof}[Sketch of proof]
For $a=q^{-2n}$ or $a=q^{2n}$, the left-hand side of \eqref{eq:lem-4k-1} can be written as
$$
-q^{-4} \,{} _{8}\phi_{7}\biggl[\begin{matrix}
q^{-2}, \, q^3, \, -q^3, \, q^3, \, q^3, \, -q^{-2}, \, q^{-2+2n}, \, q^{-2-2n} \\[1pt]
q^{-1}, \, -q^{-1}, \, q^{-1}, \, q^{-1}, \, -q^4, \, q^{4-2n}, \, q^{4+2n}
\end{matrix};q^4,\, -q^{4} \biggr]
$$
By Watson's $_8\phi_7$ transformation formula \cite[Appendix (III.18)]{GR04} with
$q\mapsto q^4$, $a= q^{-2}$, $b=c=q^3$, $d=-q^{-2}$, $e=q^{-2+2n}$, and $n\mapsto (n+1)/2$, the above expression is equal to
\begin{align*}
&-q^{-4}\frac{(q^2,-q^{6-2n};q^4)_{(n+1)/2}}
{(-q^4,q^{4-2n};q^4)_{(n+1)/2}}
\,{}_{4}\phi_{3}\biggl[\begin{matrix}
q^{-4}, \, -q^{-2}, \, q^{-2+2n}, \, q^{-2-2n} \\[1pt]
q^{-1}, \, q^{-1}, \, -q^{-4}
\end{matrix}; q^4,\, q^4 \biggr] \\
&\qquad
=\frac{-2q^{n-5}[n]_{q^2}(1+q^4)}{(1+q^{2n-2})(1+q^{2n+2})}\bigg(\frac{-1}{n}\bigg)
\left(1-\frac{(1+q^2)(1-q^{-2+2n})(1-q^{-2-2n})}{(1+q^4)(1-q)^2}q^4\right),
\end{align*}
which is just the $a=q^{-2n}$ or $a=q^{2n}$ case of \eqref{eq:lem-4k-1}.
This means that \eqref{eq:lem-4k-1} is true modulo $(1-aq^{2n})(a-q^{2n})$.
Moreover, in view of \cite[eq.~(5.3)]{Gu19c} with $q\mapsto q^2$,
we can show that \eqref{eq:lem-4k-1} is also true modulo $\Phi_n(q^2)$.
\end{proof}

We are now able to give $q$-analogues of \eqref{4k-1-1} and \eqref{4k-1-2} as follows.

\begin{theorem}
Let $n>1$ be an odd integer and let $r\ge 2$. Then, modulo
\begin{align*}
\begin{cases}
[n^r]_{q^2}\prod_{j=2}^r\Phi_{n^j}(q^2)^2 &\text{if $n>3$,} \\[5pt]
[n^r]_{q^2}\Phi_n(q)\Phi_{n^2}(q^2)\Phi_{n^2}(-q)\prod_{j=3}^r\Phi_{n^j}(q^2)^2 &\text{if $n=3$,}
\end{cases}
\end{align*}
we have
\begin{align}
&\sum_{k=0}^{M_1}(-1)^k [4k-1]_{q^2}[4k-1]^2 \frac{ (q^{-2};q^4)_k^2 (q^{-4};q^8)_k}{(q^4;q^4)_k^2 (q^8;q^8)_k}q^{4k} \notag\\
&\qquad\equiv q^{2n-2}[n]_{q^2}\bigg(\frac{-1}{n}\bigg) \frac{(1+q+q^2)(1+q^{2n})^2}{(1+q^2)^2(1+q^n+q^{2n})} \notag\\
&\qquad\quad\times \sum_{k=0}^{M_2}(-1)^k [4k-1]_{q^{2n}}[4k-1]_{q^n}^2 \frac{ (q^{-2n};q^{4n})_k^2 (q^{-4n};q^{8n})_k}{(q^{4n};q^{4n})_k^2 (q^{8n};q^{8n})_k}q^{4nk}, \label{eq:lem-4k-2}
\end{align}
where $(M_1,M_2)=((n^r+1)/2,(n^{r-1}+1)/2)$ or $(M_1,M_2)=(n^r-1,n^{r-1}-1)$.
\end{theorem}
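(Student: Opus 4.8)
The plan is to follow the proof of the $(4k+1)^3$ supercongruence \eqref{new-1-1}--\eqref{new-1-2} in structure, feeding in Lemma~\ref{lem:new-2} in place of Lemma~\ref{lem:new-1}. First I would introduce the parameter~$a$ and prove a parametric refinement of \eqref{eq:lem-4k-2}: modulo
\[
[n^r]_{q^2}\prod_{j=0}^{M_2}(1-aq^{(4j+2)n})(a-q^{(4j+2)n})
\]
(the analogue of the product appearing in \eqref{new-1-2}), the left-hand side of \eqref{eq:lem-4k-2} with $(q^{-2};q^4)_k^2$ replaced by $(aq^{-2};q^4)_k(q^{-2}/a;q^4)_k$ and $(q^4;q^4)_k^2$ by $(aq^4;q^4)_k(q^4/a;q^4)_k$ equals the analogous $q\mapsto q^n$ sum, multiplied by the prefactor of Lemma~\ref{lem:new-2} and corrected by the reciprocal of its $q\mapsto q^n$ counterpart. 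As in \cite[Theorem~1.2]{GZ19a}, both sides agree whenever $a=q^{\pm(4j+2)n}$, where Lemma~\ref{lem:new-2} evaluates each; the common divisibility by $[n^r]_{q^2}$ is inherited from the $a=1$ specialisation, and coprimality of $[n^r]_{q^2}$ with the linear product yields the parametric congruence.

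Next I would let $a\to1$. The key algebraic fact is that $\Phi_n(q^2)=\Phi_n(q)\,\Phi_n(-q)$ for odd~$n$, so the limit of the linear product supplies the cyclotomic factors $\prod_{j=1}^r\Phi_{n^j}(q^2)$ to controlled powers, exactly as in \eqref{eq:new-last}, while the $a$-dependent part of the common denominator cancels a predictable number of them. Evaluating the correction factor of Lemma~\ref{lem:new-2} at $a=1$ through
\[
(1+q^4)(1-q)^2-(1+q^2)(1-q^2)^2=-2q(1+q+q^2)(1-q)^2,
\]
the $n=1$ instance of \eqref{eq:id}, produces the factor $(1+q+q^2)/(1+q^n+q^{2n})$, while the extra denominator factor $(1+aq^2)(1+q^2/a)$ contributes $(1+q^{2n})^2/(1+q^2)^2$; together these give the announced rational prefactor $(1+q+q^2)(1+q^{2n})^2/\bigl((1+q^2)^2(1+q^n+q^{2n})\bigr)$ and pin down the surviving powers of $\Phi_{n^j}(q^2)$, yielding \eqref{eq:lem-4k-2} modulo the cyclotomic part of the stated modulus away from the $j=1$ layer.

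Finally I would upgrade to the full modulus $[n^r]_{q^2}$ by establishing the single-variable congruence
\[
\sum_{k=0}^{(n+1)/2}(-1)^k[4k-1]_{q^2}[4k-1]^2\frac{(q^{-2};q^4)_k^2(q^{-4};q^8)_k}{(q^4;q^4)_k^2(q^8;q^8)_k}q^{4k}\equiv0\pmod{[n]_{q^2}},
\]
which follows from the $a\to1$ limit of Lemma~\ref{lem:new-2} together with \cite{Gu19c}, along the lines of \cite[Theorem~1.2]{GZ19a}. Replacing $n$ by $n^r$ on the left, and $q\mapsto q^n$, $n\mapsto n^{r-1}$ on the right, shows that both sides of \eqref{eq:lem-4k-2} vanish modulo $[n^r]_{q^2}$. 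Since $[n^r]_{q^2}$ is coprime to $\prod_{j=1}^r\Phi_{n^j}(q^2)^2$, combining the two divisibilities delivers the stated modulus.

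The main obstacle will be the delicate cyclotomic bookkeeping in the $a\to1$ step, and especially the $n=3$ case with $r\ge2$. There the factor $1+q+q^2=\Phi_3(q)$ in the prefactor, together with the leading ($k=0$) contribution of Lemma~\ref{lem:new-2}, forces one extra power of $\Phi_3(q)$ at the $j=1$ layer; one must separately verify, by taking $a\to1$ in \eqref{eq:lem-4k-1}, that the left-hand side of \eqref{eq:lem-4k-2} is congruent to~$0$ modulo $\Phi_3(q)$, and reconcile this with the $\Phi_n(q^2)=\Phi_n(q)\Phi_n(-q)$ split so as to land exactly on $[n^r]_{q^2}\Phi_n(q)\Phi_{n^2}(q^2)\Phi_{n^2}(-q)\prod_{j=3}^r\Phi_{n^j}(q^2)^2$. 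In contrast to the $(4k+1)^3$ case of \eqref{new-1-1}, no additional power of $\Phi_n(-q)$ beyond the single factor already inside $[n^r]_{q^2}$ survives at the $j=1$ layer for $n>3$; this asymmetry is traceable to the extra factor $(1+aq^2)(1+q^2/a)$ present in Lemma~\ref{lem:new-2} but absent from Lemma~\ref{lem:new-1}.
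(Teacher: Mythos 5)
Your sketch follows the paper's proof essentially step for step: the parametric deformation with roots at $a=q^{\pm(4j+2)n}$ evaluated through Lemma~\ref{lem:new-2}, the $a\to1$ limit in which $(1+q^4)(1-q)^2-(1+q^2)(1-q^2)^2=-2q(1+q+q^2)(1-q)^2$ and the factor $(1+aq^2)(1+q^2/a)$ produce exactly the stated prefactor, the separate verification modulo $\Phi_3(q)^2$ for $n=3$, and the terminal $[n]_{q^2}$-congruence combined by coprimality. One concrete correction: the product of linear factors must run over $0\le j\le M_2-1$ (that is, up to $(n^{r-1}-1)/2$, respectively $n^{r-1}-2$), not up to $M_2$. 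At $a=q^{\pm(4j+2)n}$ the factor $(q^{-2n}/a;q^{4n})_k$ in the right-hand sum vanishes only for $k>j+1$, so the Watson evaluation of that side needs $M_2\ge j+1$; for $j=M_2$ the truncations cut the sums short of their natural termination points and the two sides need not agree, so that pair of linear factors is not available. Since these extra factors would only feed the $j=1$ cyclotomic layer, which your argument (like the paper's) handles separately, the final modulus is unaffected. A minor quibble with your closing remark: the absence of extra $\Phi_n(-q)$-powers at the $j=1$ layer cannot be blamed on $(1+aq^2)(1+q^2/a)$, whose limit $(1+q^2)^2=\Phi_4(q)^2$ is coprime to every $\Phi_{n^j}(q)$ for odd $n$; the cancellation actually comes from boundary denominator factors such as $(1-aq^{2n(n^{r-1}+1)})(1-q^{2n(n^{r-1}+1)}/a)$ contributed by the top terms of the right-hand sum, which at $a=1$ are divisible by $\Phi_n(q^2)^2$.
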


\begin{proof}[Sketch of proof]
We first consider the case $(M_1,M_2)=((n^r+1)/2,(n^{r-1}+1)/2)$.
Utilizing \eqref{eq:lem-4k-1}, we can prove the following parametric version of \eqref{eq:lem-4k-2}:
modulo
\begin{align}
[n^r]_{q^2}\prod_{j=0}^{(n^{r-1}-1)/2}(1-aq^{(4j+2)n})(a-q^{(4j+2)n}),  \label{eq:prod-new2}
\end{align}
we have
\begin{align}
&\sum_{k=0}^{(n^r+1)/2}(-1)^k [4k-1]_{q^2}[4k-1]^2 \frac{(aq^{-2},q^{-2}/a;q^4)_k (q^{-4};q^8)_k}{(aq^4,q^4/a;q^4)_k (q^8;q^8)_k}q^{4k}   \notag\\
&\qquad \equiv
\frac{q^{3n-3}[n]_{q^2}(1+q^4)}{(1+aq^2)(1+q^2/a)}\bigg(\frac{-1}{n}\bigg)\left(1-\frac{(1+q^2)(1-aq^{-2})(1-q^{-2}/a)}{(1+q^4)(1-q)^2}q^4\right)  \notag\\
&\qquad\quad\times\frac{(1+aq^{2n})(1+q^{2n}/a)}{1+q^{4n}}\left(1-\frac{(1+q^{2n})(1-aq^{-2n})(1-q^{-2n}/a)}{(1+q^{4n})(1-q^n)^2}q^{4n}\right)^{-1}  \notag\\
&\qquad\quad\times \sum_{k=0}^{(n^{r-1}+1)/2}(-1)^k [4k-1]_{q^{2n}}[4k-1]_{q^n}^2
\frac{(aq^{-2n},q^{-2n}/a;q^{4n})_k (q^{-4n};q^{8n})_k}{(aq^{4n},q^{4n}/a;q^{4n})_k (q^{8n};q^{8n})_k}q^{4nk}.  \label{newnew-1-2}
\end{align}

As in the previous considerations, the limit of \eqref{eq:prod-new2} as $a\to 1$ has the factor
$\prod_{j=1}^r\Phi_{n^j}(q^2)^{n^{r-j}+2}$.
This time the factor related to $a$ in the common denominator of the two sides of
\eqref{newnew-1-2} is at most
\begin{align*}
&\big((1+q^{4n})(1-q^n)^2-(1+q^{2n})(1-aq^{-2n})(1-q^{-2n}/a)q^{4n}\big)\\
&\quad\times(aq^4;q^4)_{(n^r+1)/2}(1-aq^{2n(n^{r-1}+1)}) (q^4/a;q^4)_{(n^r+1)/2}(1-q^{2n(n^{r-1}+1)}/a) ,
\end{align*}
whose limit as $a\to 1$ only contains the factor
\begin{align*}
\begin{cases}
\Phi_n(q)^2\Phi_n(q^2)^2\prod_{j=1}^r\Phi_{n^j}(q^2)^{n^{r-j}-1} &\text{if $n>3$,}\\[5pt]
\Phi_n(q)^2\Phi_{n^2}(q)\Phi_n(q^2)^2\prod_{j=1}^r\Phi_{n^j}(q^2)^{n^{r-j}-1} &\text{if $n=3$,}
\end{cases}
\end{align*}
related to $\Phi_n(q^2),\Phi_{n^2}(q^2),\ldots,\Phi_{n^r}(q^2)$.
Here we used the identity \eqref{eq:id} again.
Thus, letting $a\to 1$ in \eqref{newnew-1-2} we find out that the $q$-congruence \eqref{eq:lem-4k-2}
holds modulo
\begin{align*}
\begin{cases}
\Phi_n(-q)\prod_{j=2}^r\Phi_{n^j}(q^2)^3 &\text{if $n>3$,}\\[5pt]
\Phi_{n^2}(q^2)^2\Phi_n(-q)\Phi_{n^2}(-q)\prod_{j=3}^r\Phi_{n^j}(q^2)^3 &\text{if $n=3$}.
\end{cases}
\end{align*}

On the other hand, letting $a\to 1$ in \eqref{eq:lem-4k-1} we can easily deduce that the
left-hand side of \eqref{eq:lem-4k-2} is congruent to
$$
4q^{-n-2}[n]_{q^2}\bigg(\frac{-1}{n}\bigg)\frac{1+q+q^2}{(1+q^2)^2} \pmod{\Phi_n(q^2)^3 },
$$
which indicates that it is congruent to 0 modulo $\Phi_n(q)^2$ when $n=3$, and so \eqref{eq:lem-4k-2}
is true modulo $\Phi_n(q)^2$ when $n=3$.
From this we immediately deduce that
the $q$-congruence \eqref{eq:lem-4k-2} is true modulo
\begin{align*}
\begin{cases}
\Phi_n(q^2)\prod_{j=2}^r\Phi_{n^j}(q^2)^3 &\text{if $n>3$,}\\[5pt]
\Phi_n(q)^2\Phi_{n^2}(q^2)^2\Phi_n(-q)\Phi_{n^2}(-q)\prod_{j=3}^r\Phi_{n^j}(q^2)^3 &\text{if $n=3$}.
\end{cases}
\end{align*}

Furthermore, based on \eqref{eq:lem-4k-1}, along the lines of the proof of \cite[Theorem 1.2]{GZ19a} we can show that
\begin{align*}
\sum_{k=0}^{(n+1)/2}(-1)^k [4k-1]_{q^2}[4k-1]^2 \frac{ (q^{-2};q^4)_k^2 (q^{-4};q^8)_k}{(q^4;q^4)_k^2 (q^8;q^8)_k}q^{4k}
\equiv 0\pmod{[n]_{q^2}}.
\end{align*}
With the help of this $q$-congruence, we deduce that both sides of \eqref{eq:lem-4k-2}
are congruent to $0$ modulo $[n^r]_{q^2}$. This proves \eqref{eq:lem-4k-2} for $(M_1,M_2)=((n^r+1)/2,(n^{r-1}+1)/2)$.

For $(M_1,M_2)=(n^r-1,n^{r-1}-1)$, the proof follows from the same argument. In this case
the corresponding parametric generalization holds modulo
$$
[n^r]_{q^2}\prod_{j=0}^{n^{r-1}-2}(1-aq^{(4j+2)n})(a-q^{(4j+2)n}).
$$
At the same time, the factor related to $a$ in the common denominator of the two sides is at most
\begin{align*}
&\big((1+q^{4n})(1-q^n)^2-(1+q^{2n})(1-aq^{-2n})(1-q^{-2n}/a)q^{4n}\big)\\
&\quad\times(aq^4;q^4)_{n^r-1} (q^4/a;q^4)_{n^r-1}.
\end{align*}
Therefore, we are led to the same modulus when we take the limit as $a\to 1$.
\end{proof}

It is not hard to see that \eqref{4k-1-1} and \eqref{4k-1-2} follow from \eqref{eq:lem-4k-2} by taking
$n=p$ and $q\to 1$. In addition, we obtain the following supercongruences by setting $n=p$ and $q\to -1$ in \eqref{eq:lem-4k-2}:
\begin{align*}
\sum_{k=0}^{(p^r+1)/2} (4k-1)\frac{(-\frac{1}{2})_k^3}{k!^3}
&\equiv p\bigg(\frac{-1}{p}\bigg)
\sum_{k=0}^{(p^{r-1}+1)/2} (4k-1)\frac{(-\frac{1}{2})_k^3}{k!^3} \pmod{p^{3r-2}},
\displaybreak[2]\\
\sum_{k=0}^{p^r-1} (4k-1)\frac{(-\frac{1}{2})_k^3}{k!^3}
&\equiv p\bigg(\frac{-1}{p}\bigg)
\sum_{k=0}^{p^{r-1}-1} (4k-1)\frac{(-\frac{1}{2})_k^3}{k!^3} \pmod{p^{3r-2}},
\end{align*}
which are related to the supercongruences in \cite[Corollary 5.3]{Gu19c}.

\subsection{Generalizations of Rodriguez-Villegas' supercongruences}
Mortenson \cite{Mo03a,Mo03b} proved  the following four
supercongruences conjectured by Rodriguez-Villegas\cite[eq.~(36)]{RV03}:
\begin{align}
\sum_{k=0}^{p-1}\frac{1}{16^k}{2k\choose k}^2
&\equiv \bigg(\frac{-1}{p}\bigg)\pmod{p^2} \quad\text{for $p>2$}, \label{eq:RV1}
\displaybreak[2]\\
\sum_{k=0}^{p-1}\frac{1}{27^k}{3k\choose 2k}{2k\choose k}
&\equiv \bigg(\frac{-3}{p}\bigg)\pmod{p^2} \quad\text{for $p>3$}, \label{eq:RV2}
\displaybreak[2]\\
\sum_{k=0}^{p-1}\frac{1}{64^k}{4k\choose 2k}{2k\choose k}
&\equiv \bigg(\frac{-2}{p}\bigg)\pmod{p^2} \quad\text{for $p>2$}, \label{eq:RV3}
\displaybreak[2]\\
\sum_{k=0}^{p-1}\frac{1}{432^k}{6k\choose 3k}{3k\choose k}
&\equiv \bigg(\frac{-1}{p}\bigg)\pmod{p^2} \quad\text{for $p>3$}.\label{eq:RV4}
\end{align}
For an elementary proof of \eqref{eq:RV1}--\eqref{eq:RV4}, we refer the reader to \cite{Su14};
for a recent generalization of them, see \cite{Li17}.
Some $q$-analogues of \eqref{eq:RV1}--\eqref{eq:RV4} can be found in \cite{GZ14,GPZ17,NP18,Gu19d}.
In particular, the first author \cite[Corollary 1.4]{Gu19d} proved that, for positive integers $m$, $n$ and $s$ with $\gcd(m,n)=1$,
we have
\begin{equation}
\sum_{k=0}^{n-1}\frac{2(q^s,q^{m-s};q^m)_k q^{mk}}{(q^m;q^m)_{k}^2 (1+q^{mk})}
\equiv (-1)^{\langle -s/m\rangle_n} \pmod{\Phi_n(q)^2},  \label{q-rv}
\end{equation}
where $\langle x\rangle_n$ denotes the least nonnegative residue of $x$ modulo $n$.

Here we give a Dwork-type generalization of \eqref{q-rv} for $m=2$ and $s=1$.

\begin{theorem}
Let $n>1$ be an odd integer and let $r\ge 1$. Then, modulo $\prod_{j=1}^r\Phi_{n^j}(q)^2$,
\begin{equation}
\sum_{k=0}^{(n^r-1)/d}\frac{2(q;q^2)_k^2 q^{2k}}{(q^2;q^2)_{k}^2 (1+q^{2k})}
\equiv \bigg(\frac{-1}{n}\bigg)\sum_{k=0}^{(n^{r-1}-1)/d}\frac{2(q^n;q^{2n})_k^2 q^{2nk}}{(q^{2n};q^{2n})_{k}^2 (1+q^{2nk})},  \label{q-rvv}
\end{equation}
where $d=1,2$.
\end{theorem}

\begin{proof}[Sketch of proof]
By \cite[Corollary 1.4]{Gu19d}, we have
\begin{equation*}
\sum_{k=0}^{(n-1)/2}\frac{2(aq,q/a;q^2)_k q^{2k}}{(q^2;q^2)_{k}^2 (1+q^{2k})}
\equiv \bigg(\frac{-1}{n}\bigg) \pmod{(1-aq^n)(a-q^n)}.
\end{equation*}
This enables us to establish the following parametric generalization of \eqref{q-rv}:
modulo
$$
\prod_{j=0}^{(n^{r-1}-1)/d}(1-aq^{(2j+1)n})(a-q^{(2j+1)n}),
$$
we have
\begin{equation*}
\sum_{k=0}^{(n^r-1)/d}\frac{2(aq,q/a;q^2)_k q^{2k}}{(q^2;q^2)_{k}^2 (1+q^{2k})}
\equiv \bigg(\frac{-1}{n}\bigg)
\sum_{k=0}^{(n^{r-1}-1)/d}\frac{2(aq^n,q^n/a;q^{2n})_k q^{2nk}}{(q^{2n};q^{2n})_{k}^2 (1+q^{2nk})}.
\qedhere
\end{equation*}
\end{proof}

Letting $n=p$ and $q\to 1$ in \eqref{q-rvv} we obtain the following Dwork-type supercongruence:
\begin{align}
\sum_{k=0}^{(p^r-1)/d}\frac{1}{16^{k}}{2k\choose k}^2
\equiv  \bigg(\frac{-1}{p}\bigg)\sum_{k=0}^{(p^{r-1}-1)/d}\frac{1}{16^{k}}{2k\choose k}^2 \pmod{p^{2r}}, \label{eq:rv-1}
\end{align}
where $d=1,2$. This confirms, for the first time, predictions of Roberts and Rodriguez-Villegas from \cite{RRV19}.

Numerical calculation suggests that \eqref{eq:RV2}--\eqref{eq:RV4} have similar
generalizations modulo $p^{2r}$. It seems that these supercongruences even have neat $q$-analogues as follows.

\begin{conjecture}
\label{conj-RV}
Let $m$ and $s$ be positive integers with $s<m$. Let $n>1$ be an odd integer with $n\equiv \pm 1\pmod{m}$. Then, for $r\ge 2$, modulo $\prod_{j=1}^r\Phi_{n^j}(q)^2$,
\begin{equation}
\sum_{k=0}^{n^r-1}\frac{2(q^s,q^{m-s};q^m)_k q^{mk}}{(q^m;q^m)_{k}^2 (1+q^{mk})}
\equiv (-1)^{\langle -s/m\rangle_n}\sum_{k=0}^{n^{r-1}-1}\frac{2(q^{sn},q^{mn-sn};q^{mn})_k q^{mnk}}{(q^{mn};q^{mn})_{k}^2 (1+q^{mnk})}.  \label{q-rv-conj}
\end{equation}
\end{conjecture}

Note that \eqref{q-rvv} with $d=1$ is just the $(m,s)=(2,1)$ case of \eqref{q-rv-conj}. Although there is a parametric generalization of \eqref{q-rv-conj} for $r=1$ (see \cite[Corollary 1.4]{Gu19d}),
we are not aware of a parametric extension for $r\ge 2$.
After appearance of preliminary version of this paper, Ni \cite{Ni20} managed to prove the $n\equiv1\pmod m$ case of Conjecture~\ref{conj-RV} using the method of creative microscoping.
However, we believe that the remaining $n\equiv-1\pmod m$ case should still be very difficult.

\section{Open problems and concluding remarks}
\label{sec4}

\subsection{Open problems}
\label{sec4.1}
First we give some related open problems for further study. Recall that
Swisher's conjectural supercongruence (A.3) for $p\equiv 1\pmod{4}$ can be stated as follows:
\begin{align}
\sum_{k=0}^{(p^r-1)/2} (-1)^{k}(4k+1)\frac{(\frac{1}{2})_k^5}{k!^5}
\equiv
 -p\Gamma_p(1/4)^4 \sum_{k=0}^{(p^{r-1}-1)/2} (-1)^{k}(4k+1)\frac{(\frac{1}{2})_k^5}{k!^5}\pmod{p^{5r}}, \label{eq:a3}
\end{align}
where $\Gamma_p(x)$ denotes the $p$-adic gamma function and $p>5$.
Swisher \cite{Sw15} proves herself \eqref{eq:a3} for $r=1$.
We find the following partial $q$-analogue of \eqref{eq:a3}.

\begin{conjecture}\label{conj:1}
Let $n>1$ be an integer with $n\equiv 1\pmod{4}$ and let $r\ge 1$. Then, modulo $[n^r]\prod_{j=1}^r\Phi_{n^j}(q)^2$,
\begin{align}
&\sum_{k=0}^{(n^r-1)/d}
(-1)^k[4k+1]\frac{(q;q^2)_k^4(q^2;q^{4})_k} {(q^2;q^2)_k^4(q^4;q^4)_k}q^k  \notag\\
&\qquad\equiv \frac{(q^2;q^4)_{(n^r-1)/4}^2 (q^{4n};q^{4n})_{(n^{r-1}-1)/4}^2}{(q^4;q^4)_{(n^r-1)/4}^2 (q^{2n};q^{4n})_{(n^{r-1}-1)/4}^2}[n]
\notag\\
&\qquad\quad\times
\sum_{k=0}^{(n^{r-1}-1)/d}
(-1)^k[4k+1]_{q^n}\frac{(q^n;q^{2n})_k^4(q^{2n};q^{4n})_k} {(q^{2n};q^{2n})_k^4(q^{4n};q^{4n})_k}q^{nk}. \label{q-a3}
\end{align}
\end{conjecture}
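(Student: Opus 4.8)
The plan is to run the same creative-microscoping scheme that underlies Theorems~\ref{main-1} and~\ref{main-4}: introduce a free parameter $a$, prove a parametric $q$-congruence modulo $[n^r]$ times a product of factors $(1-aq^{(2j+1)n})(a-q^{(2j+1)n})$, and then recover \eqref{q-a3} by letting $a\to1$ while bookkeeping the orders of the cyclotomic factors. The first and decisive ingredient is an $r=1$ parametric seed; I would look for a $q$-congruence of the form, modulo $[n](1-aq^n)(a-q^n)$,
\begin{align*}
&\sum_{k=0}^{(n-1)/d}(-1)^k[4k+1]\frac{(aq;q^2)_k(q/a;q^2)_k(q;q^2)_k^2(q^2;q^4)_k}{(aq^2;q^2)_k(q^2/a;q^2)_k(q^2;q^2)_k^2(q^4;q^4)_k}q^k
\equiv U(a),
\end{align*}
where the unit $U(a)=U(a;q,n)$ is engineered so that its values at $a=q^{\pm(2j+1)n}$ match a closed-form evaluation of the corresponding terminating very-well-poised series, and so that $U(1)$ reproduces the prefactor $(q^2;q^4)_{(n-1)/4}^2/(q^4;q^4)_{(n-1)/4}^2\cdot[n]$ that, after $n\mapsto n^r$ and the usual telescoping, gives the multiplier on the right of \eqref{q-a3}.

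Granting such a seed, the remaining steps parallel Section~\ref{sec2} closely. First I would check that both sides of the $r\ge1$ parametric identity vanish modulo $[n^r]$, using the $a=1$ reduction of the seed with $n\mapsto n^r$ on the left and with $q\mapsto q^n$, $n\mapsto n^{r-1}$ on the right, exactly as in the proofs of Theorems~\ref{main-1} and~\ref{main-4}. Second, for each $j$ with $0\le j\le(n^{r-1}-1)/d$ I would specialize $a=q^{\pm(2j+1)n}$: the left sum then truncates automatically (one upper parameter becomes $q^{1-(2j+1)n}$, killing the tail), and a terminating very-well-poised summation in the spirit of Watson's ${}_8\phi_7$ transformation \cite[Appendix (III.18)]{GR04}, used as in Lemma~\ref{lem:new-1}, should evaluate both sides to a common closed form. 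The delicate point, absent from Theorem~\ref{main-4}, is that $U(a)$ is not the bare $[n]\bigl(\frac{-1}{n}\bigr)$ but carries a $q$-shifted-factorial ratio, so one must verify that the specialized unit on the right matches the VWP evaluation on the left for every single $j$. These two facts pin down the parametric congruence modulo $[n^r]\prod_{j=0}^{(n^{r-1}-1)/d}(1-aq^{(2j+1)n})(a-q^{(2j+1)n})$; the $a\to1$ limit together with the $\Phi_{n^j}$-order count (analogous to the counts in the proofs of Theorems~\ref{main-1} and~\ref{main-4}) then yields \eqref{q-a3} modulo $\prod_{j=1}^r\Phi_{n^j}(q)^3$, and combining with a standalone $[n^r]$ congruence gives the stated modulus $[n^r]\prod_{j=1}^r\Phi_{n^j}(q)^2$.

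The hard part will be producing the $r=1$ seed itself. The summand here is one base-factor richer than in Theorem~\ref{main-4}: at $q\to1$ it is the fifth-power series $(-1)^k(4k+1)(\tfrac12)_k^5/k!^5$, so the relevant terminating series sits at the level of a very-well-poised ${}_{6}F_{5}$ rather than a ${}_5F_4$, and the mixed bases $q^2,q^4$ arising from $(q^2;q^4)_k/(q^4;q^4)_k$ mean that a single application of Watson's formula will not suffice---one will likely need Bailey's ${}_{10}\phi_9$ transformation or a quadratic companion, together with a nontrivial identification of the resulting quotient of $q$-shifted factorials with the conjectured unit $U(a)$. It is precisely the absence of such a clean parametric evaluation---which would moreover have to be compatible with the full modulus $p^{5r}$ of \eqref{eq:a3}, not merely the partial $p^{3r}$ reached by $[n^r]\prod_{j=1}^r\Phi_{n^j}(q)^2$---that keeps \eqref{q-a3} at the level of a conjecture; once it is found, the scheme above should close the argument with no further new ideas.
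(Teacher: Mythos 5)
This statement is Conjecture~\ref{conj:1} of the paper: the authors do not prove it, and they state explicitly that ``using the creative microscoping method in a usual manner, we cannot prove Conjectures~\ref{conj:1} and~\ref{conj:2} for $r>1$ in general.'' Only the $r=1$ case is known (due to \cite{Gu20c}). So there is no proof in the paper to compare against, and your proposal is not a proof either: it is a plan whose decisive ingredient --- the parametric $r=1$ seed congruence with an explicit unit $U(a)$ --- is left unconstructed. You acknowledge this yourself in the final paragraph, and your diagnosis of \emph{why} the statement remains a conjecture matches the authors' own. To your credit, the scaffolding you describe (parametric congruence modulo $[n^r]\prod_j(1-aq^{(2j+1)n})(a-q^{(2j+1)n})$, specialization at $a=q^{\pm(2j+1)n}$, the $a\to1$ cyclotomic bookkeeping) is exactly the machinery of Sections~\ref{sec2} and~\ref{sec3}, so the gap is localized correctly.

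One structural obstacle deserves sharper emphasis than you give it. In every theorem the paper actually proves, the unit multiplying the right-hand sum depends only on $n$ (e.g.\ $q^{(1-n)/2}[n]\bigl(\frac{-3}{n}\bigr)$), which is what makes the matching at $a=q^{\pm(2j+1)n}$ work: the left side evaluates to the unit for modulus $(2j+1)n$, and the right side factors as (unit for $n$) times (unit for $2j+1$ in base $q^n$), and these agree by multiplicativity. In \eqref{q-a3} the prefactor involves $(q^2;q^4)_{(n^r-1)/4}^2/(q^4;q^4)_{(n^r-1)/4}^2$ and its $q^n$-counterpart at level $n^{r-1}$, i.e.\ it depends on $r$ and on both truncation lengths simultaneously (it is the $q$-analogue of $-p\Gamma_p(1/4)^4$ in \eqref{eq:a3}). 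A single $r$-independent $U(a)$ of the kind your seed posits cannot reproduce this prefactor after specialization and telescoping without a genuinely new mechanism --- a Gamma-function-style reflection/multiplication identity at the $q$-level compatible with the $a$-deformation. That, rather than merely the choice between Watson's ${}_8\phi_7$ and Bailey's ${}_{10}\phi_9$, is the missing idea, and it is why the statement stands as an open problem in Section~\ref{sec4}.
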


Note that the case $r=1$ of \eqref{q-a3} has been proved by the first author \cite{Gu20c}.
Therefore, the left-hand side of \eqref{eq:a3} is congruent to 0 modulo $p^{r}$ (including $p=5$).
To see \eqref{q-a3} is indeed a $q$-analogue of \eqref{eq:a3} modulo $p^{3r}$, one needs to check that
$$
\frac{(\frac{1}{2})_{(p^r-1)/4}^2(1)_{(p^{r-1}-1)/4}^2 }{(1)_{(p^r-1)/4}^2(\frac{1}{2})_{(p^{r-1}-1)/4}^2}
\equiv -\Gamma_p(1/4)^4  \pmod{p^{2r}}
$$
for any prime $p\equiv 1\pmod{4}$. This is similar to the case $r=1$ treated by Van Hamme in \cite[Theorem 3]{VH86}.

We also have the following complete $q$-analogues of \eqref{eq:b3-new-1} and \eqref{eq:b3-new-2}.
\begin{conjecture}\label{conj:2}
Let $n>1$ be an odd integer and let $r\geqslant
1$. Then, modulo $[n^r]\prod_{j=1}^r\Phi_{n^j}(q)^2$,
\begin{align}
\sum_{k=0}^{(n^r-1)/d}(-1)^k[4k+1]\frac{(q^2;q^4)_k^3}{(q^4;q^4)_k^3}\,q^{k}
&\equiv\dfrac{[n]_{q^2}(-q^3;q^4)_{(n^r-1)/2}(-q^{5n};q^{4n})_{(n^{r-1}-1)/2}}
{(-q^5;q^4)_{(n^r-1)/2} (-q^{3n};q^{4n})_{(n^{r-1}-1)/2}}\,(-q)^{(1-n)/2} \notag\\
&\quad\times
\sum_{k=0}^{(n^{r-1}-1)/d}(-1)^k[4k+1]_{q^n}\frac{(q^{2n};q^{4n})_k^3}{(q^{4n};q^{4n})_k^3}\,q^{nk}, \label{q-c3}
\end{align}
where $d=1,2$.
\end{conjecture}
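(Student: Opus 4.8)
The plan is to run the creative-microscoping scheme exactly as in the proofs of Theorems~\ref{main-1}--\ref{main-4}, but now carrying a \emph{non-constant} unit. First I would introduce a free parameter $a$ and study the more general sum
\[
S_r(a)=\sum_{k=0}^{(n^r-1)/d}(-1)^k[4k+1]\frac{(aq^2;q^4)_k(q^2/a;q^4)_k(q^2;q^4)_k}{(aq^4;q^4)_k(q^4/a;q^4)_k(q^4;q^4)_k}\,q^{k},
\]
obtained by splitting two of the three factors $(q^2;q^4)_k$ and $(q^4;q^4)_k$ in \eqref{q-c3} into the pairs $(aq^2;q^4)_k(q^2/a;q^4)_k$ and $(aq^4;q^4)_k(q^4/a;q^4)_k$, so that $a\to1$ recovers the left-hand side. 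The $r=1$ input I need is a parametric $q$-congruence of the shape: modulo $[n](1-aq^n)(a-q^n)$,
\[
S_1(a)\equiv U(a),
\]
where $U(a)$ is an explicit unit whose closed form at $a=q^{\pm n}$ is produced by Watson's ${}_8\phi_7$ transformation \cite[Appendix (III.18)]{GR04}, in the same way as in Lemmas~\ref{lem:new-1} and~\ref{lem:new-2}. It is precisely here that the ratio of $q$-shifted factorials appearing on the right of \eqref{q-c3} should originate, in place of a clean monomial times a Kronecker symbol.

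Next I would lift to arbitrary $r$ by proving a parametric generalization: modulo
\[
[n^r]\prod_{j=0}^{(n^{r-1}-1)/d}(1-aq^{(2j+1)n})(a-q^{(2j+1)n}),
\]
the sum $S_r(a)$ is congruent to the appropriate unit factor times the $q^n$-version of $S_{r-1}(a)$. The modulus $[n^r]$ part follows from the base congruence applied with $n\mapsto n^r$ on the left and with $q\mapsto q^n$, $n\mapsto n^{r-1}$ on the right, so that both sides vanish modulo $[n][n^{r-1}]_{q^n}=[n^r]$. For the product part I would verify that the two sides coincide at each specialization $a=q^{\pm(2j+1)n}$; by the same Watson evaluation each specialized sum collapses to a closed form, and the monomial/Kronecker bookkeeping of Theorems~\ref{main-1} and~\ref{main-4}\,---\,now carrying the extra Pochhammer factors\,---\,should make the two closed forms agree.

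Finally I would send $a\to1$. Tracking which powers of $\Phi_{n^j}(q)$ survive in the denominators $(aq^4;q^4)_{(n^r-1)/d}(q^4/a;q^4)_{(n^r-1)/d}$ against those gained from the product\,---\,exactly the accounting carried out after \eqref{eq:prod} and \eqref{eq:prod-2}\,---\,would yield \eqref{q-c3} modulo $\prod_{j=1}^r\Phi_{n^j}(q)^3$, with the stated ratio emerging as the limiting unit. Combining this with the observation that both sides are $\equiv0\pmod{[n^r]}$ (from the $a=1$ reduction of the base congruence), and using that $[n^r]$ and $\prod_{j=1}^r\Phi_{n^j}(q)^3$ have least common multiple $[n^r]\prod_{j=1}^r\Phi_{n^j}(q)^2$, would complete the argument. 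The hard part will be the $r=1$ parametric identity: since $U(a)$ is a genuine ratio of $q$-shifted factorials, both the exact ${}_8\phi_7\to{}_4\phi_3$ reduction and the verification that the specialized units factor compatibly across the points $a=q^{\pm(2j+1)n}$ are delicate, which is presumably why \eqref{q-c3} is recorded only as a conjecture.
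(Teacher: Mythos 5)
This statement is recorded in the paper as a \emph{conjecture}, not a theorem: the authors prove only the $r=1$ case (in \cite{GZ20}) and state explicitly that ``using the creative microscoping method in a usual manner, we cannot prove Conjectures \ref{conj:1} and \ref{conj:2} for $r>1$ in general.'' Your proposal is precisely that usual manner, so it runs into the obstruction the authors are pointing at. The gap is concentrated in the step you defer to ``monomial/Kronecker bookkeeping'': to get the parametric congruence modulo the product $\prod_j(1-aq^{\ast})(a-q^{\ast})$, you must check that at each specialization the Watson evaluation of the big sum equals the unit times the Watson evaluation of the small sum. In Theorems~\ref{main-1} and~\ref{main-2} this works because the unit is $q^{(1-n)/2}[n]\bigl(\tfrac{-D}{n}\bigr)$, and the identity $q^{(1-(2j+1)n)/2}[(2j+1)n]\bigl(\tfrac{-D}{(2j+1)n}\bigr)=q^{(1-n)/2}[n]\bigl(\tfrac{-D}{n}\bigr)\cdot q^{-jn}[2j+1]_{q^n}\bigl(\tfrac{-D}{2j+1}\bigr)$ is an exact multiplicativity of the unit across levels. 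The unit in \eqref{q-c3} is instead a ratio of $q$-shifted factorials whose lengths are $(n^r-1)/2$ and $(n^{r-1}-1)/2$\,---\,it depends on $r$ globally\,---\,and there is no analogous factorization $U_{(2j+1)n}=U_n\cdot U^{(q^n)}_{2j+1}$ of these Pochhammer ratios. Without such an identity the two sides of the specialized equation do not visibly agree, and the recursion does not close; this is exactly why the statement remains conjectural.

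Two further points. First, your specializations $a=q^{\pm(2j+1)n}$ are mismatched to the $q^2/q^4$ structure of the summand: with $(aq^2;q^4)_k$ in the numerator you need $a=q^{\pm2(2j+1)n}$ (equivalently, microscoping in the variable $q^2$) for the sum to truncate and for the vanishing factors to land on $\Phi_{n^j}(q^2)$; compare Lemmas~\ref{lem:new-1} and~\ref{lem:new-2}, where the modulus is $\Phi_n(q^2)(1-aq^{2n})(a-q^{2n})$. Second, even granting the setup, you have not produced the $r=1$ parametric evaluation $S_1(a)\equiv U(a)$ nor verified the cross-level compatibility; you acknowledge both as ``the hard part,'' which means the proposal is a programme rather than a proof. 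A correct assessment of this item is that no proof is currently known, and any successful argument will need a new idea to handle a non-multiplicative, $r$-dependent unit $\omega(q)$ (cf.\ the discussion of Dwork-type $q$-congruences in Section~\ref{sec4.2}).
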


Note that the case $r=1$ of \eqref{q-c3} was proved by the authors in \cite{GZ20}. However, using the creative microscoping method in a usual manner, we
cannot prove Conjectures \ref{conj:1} and \ref{conj:2} for $r>1$ in general.

Based on \cite[Theorem 1.1]{GZ20} we formulate a partial $q$-analogue of Swisher's (H.3) supercongruence \cite{Sw15}.

\begin{conjecture}\label{conj:3}
Let $n>1$ be an integer with $n\equiv 1\pmod{4}$ and let $r\ge 1$. Then, modulo $\prod_{j=1}^r\Phi_{n^j}(q)^2$,
\begin{align*}
\sum_{k=0}^{(n^r-1)/d}
\frac{(1+q^{4k+1})\,(q^2;q^4)_k^3}{(1+q)\,(q^4;q^4)_k^3}\,q^{k}  &\equiv \dfrac{[n]_{q^2}(q^3;q^4)_{(n^r-1)/2}(q^{5n};q^{4n})_{(n^{r-1}-1)/2}}
{(q^5;q^4)_{(n^r-1)/2} (q^{3n};q^{4n})_{(n^{r-1}-1)/2}}\,q^{(1-n)/2}  \\
&\quad\times\sum_{k=0}^{(n^{r-1}-1)/d}
\frac{(1+q^{(4k+1)n})\,(q^{2n};q^{4n})_k^3}{(1+q^n)\,(q^{4n};q^{4n})_k^3}\,q^{nk}, 
\end{align*}
where $d=1,2$.
\end{conjecture}

We also have the following partial $q$-analogues of \eqref{eq:last-1} and \eqref{eq:last-2}.

\begin{conjecture}\label{conj:4}
Let $n>1$ be an odd integer and let $r\ge
1$. Then, modulo $[n^r]\Phi_{n^r}(q)\prod_{j=1}^r\Phi_{n^j}(q)$,
\begin{align}
&\sum_{k=0}^{(n^r-1)/d}(-1)^k[3k+1]\frac{(q;q^2)_k^3}{(q;q)_k^3}  \notag\\
&\qquad \equiv q^{((n^r-1)^2-n(n^{r-1}-1)^2)/4}[n]\bigg(\frac{-1}{n}\bigg)
\sum_{k=0}^{(n^{r-1}-1)/d}(-1)^k[3k+1]_{q^n}\frac{(q^{n};q^{2n})_k^3}{(q^{n};q^{n})_k^3}, \label{eq:div-2-2}
\end{align}
where $d=1,2$.
\end{conjecture}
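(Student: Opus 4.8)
The plan is to run the creative-microscoping template of Section~\ref{sec2}, introducing a free parameter $a$ and deducing the congruence of Conjecture~\ref{conj:4} by letting $a\to1$ in a parametric $q$-congruence. The natural parametrisation of the summand is
\begin{equation*}
(-1)^k[3k+1]\frac{(aq;q^2)_k\,(q/a;q^2)_k\,(q;q^2)_k}{(aq;q)_k\,(q/a;q)_k\,(q;q)_k},
\end{equation*}
which collapses to the summand in Conjecture~\ref{conj:4} at $a=1$. As the first ingredient I would need a base $q$-congruence, valid modulo $[n](1-aq^n)(a-q^n)$ for odd $n$ and $d\in\{1,2\}$, stating that the corresponding sum over $0\le k\le(n-1)/d$ equals a prescribed closed form. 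Its $a\to1$, $n\mapsto n^r$ reduction must force the left-hand side of the parametric analogue to be $\equiv0\pmod{[n^r]}$, while the conjugate substitution $q\mapsto q^n$, $r\mapsto r-1$ must do the same for the right-hand side, giving the congruence modulo $[n^r]$, exactly as in the proof of Theorem~\ref{main-2-par}.

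The decisive step is the verification modulo the $a$-product $\prod_j(1-aq^{(2j+1)n})(a-q^{(2j+1)n})$. Setting $a=q^{\pm(2j+1)n}$ truncates $(aq;q^2)_k$, turning each side into a terminating very-well-poised basic hypergeometric sum of the kind appearing in Lemmas~\ref{lem:new-1} and~\ref{lem:new-2}; I would evaluate it by a Watson ${}_8\phi_7$ transformation followed by a summable ${}_4\phi_3$. The telescoping of these evaluations over $j$, together with the identity $[n]\,[2j+1]_{q^n}=[(2j+1)n]$ and multiplicativity of the Kronecker symbol, should reproduce the right-hand side of Conjecture~\ref{conj:4} \emph{including} its unusual quadratic exponent $((n^r-1)^2-n(n^{r-1}-1)^2)/4$; since the summand carries no explicit power of $q$, this exponent can only emerge from the closed forms produced by the specialisations, and tracking it correctly is the chief bookkeeping hurdle. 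A careful $a\to1$ accounting then shows that the $a$-product contributes $\prod_{j=1}^{r}\Phi_{n^j}(q)^2$ up to the cyclotomic factors absorbed by the denominator factors $(aq;q)_{(n^r-1)/d}(q/a;q)_{(n^r-1)/d}$, while the extra $\Phi_{n^r}(q)$ survives at the top level; taking the least common multiple with $[n^r]$ yields the modulus $[n^r]\Phi_{n^r}(q)\prod_{j=1}^r\Phi_{n^j}(q)$.

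The genuine obstacle\,---\,and the reason this is offered as a conjecture\,---\,is the existence of the base parametric congruence in the first place. The series $\sum_k(-1)^k(3k+1)2^{3k}(\tfrac12)_k^3/k!^3$ is \emph{divergent}, and its $q=1$ supercongruence \eqref{eq:div-3} was proved in \cite{GZ12} by a WZ certificate rather than by summing a hypergeometric series. Unlike the decorated summand of Theorem~\ref{main-3}, whose parametric version is covered by \cite[Theorem~6.1]{GS20b}, the clean $(q;q)_k^3$ form does not appear to reduce to any Watson- or Rogers-type evaluation presently available, and without such an evaluation the specialisation step above cannot be carried out. The most promising route I see is to $q$-deform the underlying WZ pair of \cite{GZ12} so that the certificate itself carries the parameter $a$, thereby manufacturing the required closed form directly; once that identity is in hand, the remaining argument is the routine cyclotomic bookkeeping sketched above.
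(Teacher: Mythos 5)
You have correctly read the situation: the statement is labelled a \emph{conjecture} in the paper, and the authors give no proof of it for $r\ge2$ (they only record that the $r=d=1$ case is in \cite{Gu20a} and the $r=1$, $d=2$ case in \cite{GZ19a}). So there is no proof in the paper to compare your proposal against, and your proposal\,---\,candidly\,---\,is not a proof either: it is a description of how the creative-microscoping template \emph{would} be run if the missing ingredient existed. Your diagnosis of the obstruction is essentially the right one and matches the authors' own stance (they state explicitly that the usual microscoping procedure fails for their unproved conjectures for $r>1$): what is absent is a parametric base congruence for the clean summand $(-1)^k[3k+1](aq;q^2)_k(q/a;q^2)_k(q;q^2)_k/\bigl((aq;q)_k(q/a;q)_k(q;q)_k\bigr)$ whose specialisations $a=q^{\pm(2j+1)n}$ admit closed-form evaluation; the only available evaluation of this type (the $b\to0$ case of \cite[Theorem 4.8]{GZ19a}, used for Theorem~\ref{main-2}, or the $b=-1$, $c\to0$ case of \cite[Theorem 6.1]{GS20b}, used for Theorem~\ref{main-3}) forces the extra decoration $q^{-\binom{k+1}{2}}$ or $(-q;q)_k/(-q^2;q^2)_k$ on the summand, and removing it destroys the terminating well-poised structure.

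Two further points reinforce that a routine adaptation cannot work, and you only partially flag them. First, the prefactor $q^{((n^r-1)^2-n(n^{r-1}-1)^2)/4}$ depends on $r$, unlike the $r$-independent $q^{(1-n)/2}$ produced by the specialisation/telescoping mechanism in Theorems~\ref{main-1gen} and~\ref{main-2-par}; your claim that it ``can only emerge from the closed forms produced by the specialisations'' is optimistic and unsubstantiated. Second, the conjectured modulus $[n^r]\Phi_{n^r}(q)\prod_{j=1}^r\Phi_{n^j}(q)$ is \emph{not} of the shape $[n^r]\prod_{j=1}^r\Phi_{n^j}(q)^2$ that the $a\to1$ bookkeeping of Section~\ref{sec2} delivers: it is stronger at the top level $n^r$ and weaker at the levels $n^j$ with $j<r$, so your sketch of the cyclotomic accounting in the second paragraph would, even granting the base identity, prove a different modulus from the one asserted. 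Your suggestion of $q$-deforming the WZ pair of \cite{GZ12} with the parameter $a$ built into the certificate is a reasonable research direction, but as it stands the proposal establishes nothing beyond what the paper already concedes by calling the statement a conjecture.
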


We point out that the case $r=d=1$ of \eqref{eq:div-2-2} was established by the first author in \cite{Gu20a},
while the case $r=1$, $d=2$ of \eqref{eq:div-2-2} was confirmed by the authors in \cite{GZ19a}.

Similarly, we have the following partial $q$-analogues of \eqref{eq:b3-new-1} and \eqref{eq:b3-new-2}.
The proof of the case $r=1$ can be found in \cite{Gu18b,GZ19a}.

\begin{conjecture}\label{conj:5}
Let $n>1$ be an odd integer and let $r\ge
1$. Then, modulo $[n^r]\Phi_{n^r}(q)\prod_{j=1}^r\Phi_{n^j}(q)$,
\begin{align*}
&\sum_{k=0}^{(n^r-1)/d}(-1)^k[4k+1]\frac{(q;q^2)_k^3}{(q^2;q^2)_k^3}\,q^{k^2}  \notag\\
&\qquad \equiv q^{((n^r-1)^2-n(n^{r-1}-1)^2)/4}[n]\bigg(\frac{-1}{n}\bigg)
\sum_{k=0}^{(n^{r-1}-1)/d}(-1)^k[4k+1]_{q^n}\frac{(q^{n};q^{2n})_k^3}{(q^{2n};q^{2n})_k^3}\,q^{nk^2},
\end{align*}
where $d=1,2$.
\end{conjecture}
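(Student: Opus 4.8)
The plan is to run the creative-microscoping scheme used throughout Section~\ref{sec3}. I would first introduce the parametric left-hand side
\begin{align*}
S_r(a)=\sum_{k=0}^{(n^r-1)/d}(-1)^k[4k+1]\frac{(aq;q^2)_k(q/a;q^2)_k(q;q^2)_k}{(aq^2;q^2)_k(q^2/a;q^2)_k(q^2;q^2)_k}\,q^{k^2},
\end{align*}
which collapses to the target sum as $a\to1$, together with the analogous $a$-deformation $T_r(a)$ of the right-hand side (replace $q\mapsto q^n$, $r\mapsto r-1$ and insert the same parameter~$a$). The $r=1$ results of \cite{Gu18b,GZ19a} supply a base $q$-congruence: modulo $[n](1-aq^n)(a-q^n)$,
\begin{align*}
\sum_{k=0}^{(n-1)/d}(-1)^k[4k+1]\frac{(aq;q^2)_k(q/a;q^2)_k(q;q^2)_k}{(aq^2;q^2)_k(q^2/a;q^2)_k(q^2;q^2)_k}\,q^{k^2}\equiv q^{(n-1)^2/4}[n]\biggl(\frac{-1}{n}\biggr).
\end{align*}

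Following Theorems~\ref{main-1gen} and~\ref{main-2-par}, the next step would be to prove the parametric congruence $S_r(a)\equiv T_r(a)$ modulo $[n^r]\prod_{j=0}^{(n^{r-1}-1)/d}(1-aq^{(2j+1)n})(a-q^{(2j+1)n})$. Divisibility by $[n^r]$ is routine: one applies the base congruence with $n\mapsto n^r$ to $S_r$ and with $q\mapsto q^n$, $r\mapsto r-1$ to $T_r$. The product part would require, for each admissible~$j$, that the substitutions $a=q^{\pm(2j+1)n}$ make the two sides coincide as exact $q$-identities: the factor $(q^{1-(2j+1)n};q^2)_k$ truncates $S_r$, which an analogue of Lemma~\ref{lem:2} would evaluate to $q^{((2j+1)n-1)^2/4}[(2j+1)n]\bigl(\frac{-1}{(2j+1)n}\bigr)$, while the truncated inner sum of $T_r$ evaluates (via the same identity in the variable $q^n$ with modulus-parameter $2j+1$) to $q^{nj^2}[2j+1]_{q^n}\bigl(\frac{-1}{2j+1}\bigr)$. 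Finally I would let $a\to1$ and extract the modulus by subtracting the cyclotomic factors surviving in the common denominator from those produced by the $a$-product, aiming for $[n^r]\Phi_{n^r}(q)\prod_{j=1}^r\Phi_{n^j}(q)$.

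The decisive obstruction\,---\,the reason the statement is only conjectural for $r>1$\,---\,surfaces in the specialisation step. Matching the two evaluations above demands $E((2j+1)n)=P+n\,E(2j+1)$ with a single $j$-independent prefactor~$P$, where $E(m)=(m-1)^2/4$ is the quadratic exponent forced by the factor $q^{k^2}$. A direct computation gives
\begin{align*}
E((2j+1)n)-n\,E(2j+1)=n(n-1)\,j(j+1)+\tfrac14(n-1)^2,
\end{align*}
whose leading term $n(n-1)\,j(j+1)$ genuinely depends on~$j$. Hence no constant $P$ can absorb the discrepancy, and the parametric congruence one would like to prove is \emph{false} with a fixed prefactor as soon as some $j\ge1$ enters the product. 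For $r=1$ the product range reduces to the single index $j=0$, where the discrepancy equals $\tfrac14(n-1)^2$ and is absorbed by $P=\tfrac14(n-1)^2$; this is precisely why $r=1$ is provable while $r\ge2$ is not, and it is the same phenomenon that blocks the usual method for Conjectures~\ref{conj:1} and~\ref{conj:2}. The prefactor $q^{((n^r-1)^2-n(n^{r-1}-1)^2)/4}$ in the conjecture is of Dwork/unit-root scaling type\,---\,it aligns the $q^{k^2}$- and $q^{nk^2}$-ranges of the two truncations\,---\,rather than of telescoping type, so reaching it would require a genuinely different argument (for instance a multi-parameter deformation, or a direct $q$-Dwork analysis), and the top-heavy modulus claimed, $\Phi_{n^r}(q)^3$ at the top level but only $\Phi_{n^j}(q)^2$ below, is what one expects to salvage once the full telescoping is unavailable.
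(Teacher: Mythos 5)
You have not proved the statement, but neither does the paper: this is Conjecture~\ref{conj:5}, left open there with only the case $r=1$ attributed to \cite{Gu18b,GZ19a}, and the authors explicitly record (in connection with Conjectures~\ref{conj:1} and~\ref{conj:2}) that the creative-microscoping method ``in a usual manner'' does not reach these statements for $r>1$. Your write-up is therefore best read not as a proof attempt but as a correct diagnosis of \emph{why} the method of Theorems~\ref{main-1gen} and~\ref{main-2-par} breaks down here, and that diagnosis is accurate. The telescoping step requires the evaluations at $a=q^{\pm(2j+1)n}$ to satisfy $E((2j+1)n)=P+nE(2j+1)$ with a $j$-independent exponent $P$; for the affine exponents of Section~\ref{sec2} (e.g.\ $E(m)=(1-m)/2$) this cocycle relation holds identically, whereas for the quadratic exponent $E(m)=(m-1)^2/4$ forced by $q^{k^2}$ your computation
\begin{align*}
E((2j+1)n)-nE(2j+1)=n(n-1)\,j(j+1)+\tfrac14(n-1)^2
\end{align*}
is correct and shows the discrepancy genuinely depends on $j$ once $j\ge1$. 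This is exactly the obstruction; the conjectured prefactor $q^{((n^r-1)^2-n(n^{r-1}-1)^2)/4}=q^{E(n^r)-nE(n^{r-1})}$ is the Dwork-type scaling one would want, but it cannot simultaneously absorb all the $j$-specializations. Your reading of the modulus ($\Phi_{n^r}(q)^3$ at the top level, $\Phi_{n^j}(q)^2$ for $j<r$) is also right.

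The only caution I would add is that your base congruence and the exact evaluation $q^{((2j+1)n-1)^2/4}[(2j+1)n]\bigl(\frac{-1}{(2j+1)n}\bigr)$ of the specialized outer sum are asserted rather than verified; they are consistent with the $r=1$ case of the conjecture and with the cited sources, but since your whole argument is a no-go analysis rather than a proof, nothing is ultimately established beyond what the paper already claims. In short: the statement remains open, your proposal does not close it, and it does not pretend to; what it does contribute\,---\,an explicit identification of the failed cocycle relation for quadratic exponents\,---\,is correct and is a sharper explanation of the obstruction than the paper itself gives.
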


We also have a $q$-analogue of \eqref{eq:rv-1} modulo $p^{r+1}$, which seems difficult to prove;
for the case $r=1$, see \cite{GZ14}.

\begin{conjecture}\label{conj:6}
Let $n>1$ be an odd integer and let $r\ge 1$. Then, modulo $\Phi_{n^r}(q)\prod_{j=1}^r\Phi_{n^j}(q)$,
\begin{equation*}
\sum_{k=0}^{(n^r-1)/d}\frac{(q;q^2)_k^2 }{(q^2;q^2)_{k}^2}
\equiv q^{(1-n)(1+n^{2r-1})/4}\bigg(\frac{-1}{n}\bigg)
\sum_{k=0}^{(n^{r-1}-1)/d}\frac{(q^n;q^{2n})_k^2}{(q^{2n};q^{2n})_{k}^2 },
\end{equation*}
where $d=1,2$.
\end{conjecture}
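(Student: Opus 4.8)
The plan is to transplant the creative-microscoping argument that proves the weighted companion~\eqref{q-rv} above. I would introduce a parameter $a$ and study
\[
S_r(a)=\sum_{k=0}^{(n^r-1)/d}\frac{(aq;q^2)_k\,(q/a;q^2)_k}{(q^2;q^2)_k^2},
\]
whose value at $a=1$ is the left-hand side of the conjecture, together with its base-$q^n$ analogue $S_{r-1}^{(n)}(a)=\sum_{k}(aq^n;q^{2n})_k(q^n/a;q^{2n})_k/(q^{2n};q^{2n})_k^2$ on the right. The crucial local input is the evaluation at $a=q^{\pm n}$: one of the numerator factors becomes $(q^{1-n};q^2)_k$ and truncates $S_1(a)$ at $k=(n-1)/2$, turning it into a terminating ${}_2\phi_1$ in base $q^2$ to which the second terminating $q$-Chu–Vandermonde summation~\cite{GR04} applies, its argument degenerating to $1$. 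This gives
\[
S_1(q^{\pm n})=\frac{(q^{1-n};q^2)_{(n-1)/2}}{(q^2;q^2)_{(n-1)/2}}=q^{(1-n^2)/4}\Bigl(\frac{-1}{n}\Bigr),
\]
the last equality being a routine product simplification. Since $S_1(a)=S_1(1/a)$, the two roots $a=q^{\pm n}$ of $(1-aq^n)(a-q^n)$ carry the same value, so the remainder of $S_1(a)$ modulo this quadratic is forced to be that very constant:
\[
S_1(a)\equiv q^{(1-n^2)/4}\Bigl(\frac{-1}{n}\Bigr)\pmod{(1-aq^n)(a-q^n)}.
\]
Letting $a\to1$ here reproves the $r=1$ case modulo $\Phi_n(q)^2$, recovering~\cite{GZ14}.

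To iterate I would seek a relation $S_r(a)\equiv C_r(a)\,S_{r-1}^{(n)}(a)$ modulo $[n^r]\prod_{j}(1-aq^{(2j+1)n})(a-q^{(2j+1)n})$ and determine the correction factor $C_r(a)$ by forcing equality at $a=q^{\pm(2j+1)n}$. At such a point both sums terminate and are evaluated by the same $q$-Chu–Vandermonde summation in bases $q$ and $q^n$, yielding $S_r(q^{(2j+1)n})=q^{(1-(2j+1)^2n^2)/4}\bigl(\tfrac{-1}{(2j+1)n}\bigr)$ and $S_{r-1}^{(n)}(q^{(2j+1)n})=q^{n(1-(2j+1)^2)/4}\bigl(\tfrac{-1}{2j+1}\bigr)$; dividing forces
\[
C_r(q^{(2j+1)n})=\Bigl(\frac{-1}{n}\Bigr)\,q^{(1-n)(1+(2j+1)^2 n)/4}.
\]

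This is where I expect the method to fail, and it is the main obstacle. The $q$-exponent of the prescribed value of $C_r$ is quadratic in $2j+1$, so along $a=q^{(2j+1)n}$ it grows quadratically in $j$; any fixed Laurent (or rational) function of $a$ has $q$-exponent growing only linearly in $j$ along this progression. Hence no single $C_r(a)$ interpolates the required values, and the one-parameter creative microscope cannot deposit the second cyclotomic factor $\Phi_{n^j}(q)$ for $j<r$. This is precisely the effect that the weight $2q^{2k}/(1+q^{2k})$ neutralizes in~\eqref{q-rv}: there the $q$-Chu–Vandermonde value is the bare sign $\bigl(\tfrac{-1}{M}\bigr)$, the Gaussian $q^{(1-M^2)/4}$ disappears, and $C_r$ is the constant $\bigl(\tfrac{-1}{n}\bigr)$. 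The Gaussian that obstructs us here is exactly the prefactor $q^{(1-n)(1+n^{2r-1})/4}$ appearing in the statement: its exponent can be guessed by telescoping the per-level evaluations, but the microscope cannot deposit it rationally in $a$.

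To reach the weaker, non-uniform modulus $\Phi_{n^r}(q)^2\prod_{j=1}^{r-1}\Phi_{n^j}(q)$ actually claimed, I would split the two requirements, since this modulus is the least common multiple of $\prod_{j=1}^{r}\Phi_{n^j}(q)$ and $\Phi_{n^r}(q)^2$. The top factor $\Phi_{n^r}(q)^2$ I can still secure: apply the clean congruence for $S_1$ with $n\mapsto n^r$ and let $a\to1$ to get the left-hand side modulo $\Phi_{n^r}(q)^2$, then use $\Phi_{n^r}(q)=\Phi_{n^{r-1}}(q^n)$ to evaluate the right-hand side via the same congruence in base $q^n$ with length $n^{r-1}$; the two Gaussian prefactors telescope consistently, their exponents summing to $(1-n^{2r})/4$ and their signs to $\bigl(\tfrac{-1}{n^r}\bigr)$, which is exactly the required top-level congruence, and no intermediate factor $C_r(a)$ enters, so the Gaussian obstruction is inert. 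The residual first-power factors $\prod_{j=1}^{r-1}\Phi_{n^j}(q)$ I would try to extract by the block-reduction arguments used for the modulus $[n^r]$ in Theorems~\ref{main-1} and~\ref{main-2}, working modulo each $\Phi_{n^j}(q)$ separately so as to avoid the forbidden global factor; the delicate point I do not expect to settle this way is the compatibility of the Gaussian prefactor with these lower reductions, and it is this gap that leaves the conjecture open.
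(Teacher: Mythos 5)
This statement is Conjecture~\ref{conj:6} of the paper: the authors give \emph{no} proof, explicitly describing it as a $q$-analogue ``which seems difficult to prove'' and citing only the $r=1$ case. So there is no paper proof to compare against, and your proposal — which openly concludes that the conjecture is left open — should be judged on whether its partial claims and its diagnosis of the obstruction are sound. They are. Your evaluation $S_1(q^{\pm n})=q^{(1-n^2)/4}\bigl(\frac{-1}{n}\bigr)$ via the terminating $q$-Chu--Vandermonde with argument $cq^N/a=1$ is correct (the product $(q^{1-n};q^2)_{(n-1)/2}/(q^2;q^2)_{(n-1)/2}$ does simplify to $(-1)^{(n-1)/2}q^{-(n^2-1)/4}$), and the symmetry $S_1(a)=S_1(1/a)$ legitimately upgrades the two point evaluations to a congruence modulo $(1-aq^n)(a-q^n)$. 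Your computation of the forced interpolation values $C_r(q^{(2j+1)n})=\bigl(\frac{-1}{n}\bigr)q^{(1-n)(1+(2j+1)^2n)/4}$ is also correct, and the observation that the exponent is quadratic in $j$ — so that no fixed rational function of $a$ can play the role of $C_r(a)$, in contrast to the weighted sum of the preceding theorem where the Gaussian factor cancels and $C_r$ is constant — is precisely the structural reason the one-parameter creative microscope does not apply here. Your telescoping check that the top-level congruence modulo $\Phi_{n^r}(q)^2$ holds (exponents summing to $(1-n^{2r})/4$, signs to $\bigl(\frac{-1}{n^r}\bigr)$) is likewise correct and consistent with the stated prefactor $q^{(1-n)(1+n^{2r-1})/4}$ being exactly $C_r$ evaluated at the top node $a=q^{n^r}$.

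The genuine gap is the one you name yourself: the first-power factors $\Phi_{n^j}(q)$ for $1\le j\le r-1$ in the modulus $\Phi_{n^r}(q)^2\prod_{j=1}^{r-1}\Phi_{n^j}(q)$ are not obtained. Your suggestion to extract them by block-reduction as in the $[n^r]$ arguments of Theorems~\ref{main-1} and~\ref{main-2} does not transfer as stated: those arguments rest on a divisibility statement of the form ``the truncated sum is $\equiv0\pmod{[n]}$'' (Lemmas~\ref{lem:1} and~\ref{lem:3}), which makes both sides vanish modulo $[n^r]$ independently; here neither side vanishes modulo $\Phi_{n^j}(q)$, and one would instead need to match the two sides against each other modulo each lower cyclotomic factor, Gaussian prefactor included — exactly the compatibility you flag as unresolved. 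So the proposal is an accurate partial analysis rather than a proof, which is consistent with the authors' own decision to leave the statement as an open conjecture.
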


The authors \cite[Theorem 4.14]{GZ19a} utilized Andrews' $q$-analogue of Gauss' $_2F_1(-1)$ sum (see \cite[Appendix (II.11)]{GR04})
to prove that, for $n\equiv 3\pmod{4}$,
$$
\sum_{k=0}^{(n-1)/2}\frac{(q;q^2)_k^2 }{(q^2;q^2)_k(q^4;q^4)_k}q^{2k} \equiv 0\pmod{\Phi_n(q)^2}.
$$
Using the same method, we can show that, for $n\equiv 1\pmod{4}$,
$$
\sum_{k=0}^{(n-1)/2}\frac{(q;q^2)_k^2 }{(q^2;q^2)_k(q^4;q^4)_k} q^{2k}
\equiv \bigg(\frac{-2}{n}\bigg)q^{(n-1)(n+3)/8}\frac{(q^2;q^4)_{(n-1)/4}}{(q^4;q^4)_{(n-1)/4}}\pmod{\Phi_n(q)^2}.
$$

We have the following Dwork-type generalizations of the above $q$-congruence.
\begin{conjecture}Let $n>1$ be an integer with $n\equiv 1\pmod{4}$ and let $r\geqslant 1$. Then, modulo $\Phi_{n^r}(q)\prod_{j=1}^r\Phi_{n^j}(q)$,
\begin{align*}
&\sum_{k=0}^{(n^r-1)/d}\frac{(q;q^2)_k^2 }{(q^2;q^2)_k(q^4;q^4)_k}q^{2k} \\
&\quad\equiv \bigg(\frac{-2}{n}\bigg)q^{((n^r-1)(n^r+3)-n(n^{r-1}-1)(n^{r-1}+3))/8}\frac{(q^2;q^4)_{(n^r-1)/4}(q^{4n};q^{4n})_{(n^{r-1}-1)/4}}{(q^4;q^4)_{(n^r-1)/4}(q^{2n};q^{4n})_{(n^{r-1}-1)/4}}\\
&\quad\quad\times \sum_{k=0}^{(n^{r-1}-1)/d}\frac{(q^n;q^{2n})_k^2 }{(q^{2n};q^{2n})_k(q^{4n};q^{4n})_k} q^{2nk},
\end{align*}
where $d=1,2$.
\end{conjecture}

For the case where $n$ is a prime and $q$ tends to $1$, the following stronger Dwork-type supercongruences seem to be true: for any prime $p\equiv 1\pmod{4}$ and $d=1,2$,
\begin{align*}
\sum_{k=0}^{(p^r-1)/d}\frac{1}{32^k}{2k\choose k}^2
\equiv \bigg(\frac{-2}{p}\bigg)\frac{(\frac{1}{2})_{(p^r-1)/4}(1)_{(p^{r-1}-1)/4}}{(1)_{(p^r-1)/4}(\frac{1}{2})_{(p^{r-1}-1)/4}}
\sum_{k=0}^{(p^{r-1}-1)/d}\frac{1}{32^k}{2k\choose k}^2 \pmod{p^{2r}}.
\end{align*}
Note that the $r=1$ case was first proved by Sun \cite{SunZH}.

Recently, the first author \cite{Gu19b} proved the $q$-congruence
\begin{align}
\sum_{k=0}^{n-1}\frac{q^k}{(-q;q)_{k}}{2k\brack k}_q\equiv \bigg(\frac{-1}{n}\bigg)q^{(n^2-1)/4} \pmod{\Phi_n(q)^2}, \label{eq:Tauraso-1}
\end{align}
conjectured earlier by Tauraso \cite{Ta13} for $n$ an odd prime.
The first author also conjectured that
$$
\sum_{k=0}^{n-1} q^k {2k\brack k} \equiv \left(\frac{-3}{n}\right)q^{(n^2-1)/3} \pmod{\Phi_n(q)^2},
$$
which was confirmed by Liu and Petrov \cite{LP19}.
We indicate the following Dwork-type $q$-generalizations of them.

\begin{conjecture}\label{conj:7}
Let $n>1$ be an odd integer and let $r\ge 1$. Then, modulo $\Phi_{n^r}(q)^{2-d}\prod_{j=1}^r\Phi_{n^j}(q)$,
\begin{align*}
\sum_{k=0}^{(n^r-1)/d}\frac{q^k}{(-q;q)_{k}}{2k\brack k}_q
&\equiv q^{(n-1)(1+n^{2r-1})/4}\bigg(\frac{-1}{n}\bigg)
\sum_{k=0}^{(n^{r-1}-1)/d}\frac{q^{nk}}{(-q^n;q^n)_{k}}{2k\brack k}_{q^n}, \\
\sum_{k=0}^{(n^r-1)/d}q^k {2k\brack k}_q
&\equiv q^{(n-1)(1+n^{2r-1})/3}\bigg(\frac{-3}{n}\bigg)
\sum_{k=0}^{(n^{r-1}-1)/d}q^{nk}{2k\brack k}_{q^n},
\end{align*}
where $d=1,2$. When $d=1$, the second $q$-congruence still holds for even integers $n$.
\end{conjecture}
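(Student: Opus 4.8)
The plan is to treat both $q$-congruences by the creative microscoping scheme of Sections~\ref{sec2} and~\ref{sec3}: introduce one extra parameter $a$, establish a parametric $q$-congruence that returns the stated one as $a\to1$, and read off the modulus from the $a$-specializations. A convenient first move is to simplify the first summand,
$$
\frac{q^k}{(-q;q)_k}{2k\brack k}_q=\frac{(q;q^2)_k}{(q;q)_k}\,q^k,
$$
using $(-q;q)_k=(q^2;q^2)_k/(q;q)_k$ and $(q;q)_{2k}=(q;q^2)_k(q^2;q^2)_k$; the second sum $\sum_k q^k{2k\brack k}_q$ is already a bare central $q$-binomial series. Writing each sum in this single-$q$-binomial shape is what indicates where an $a$-deformation can be inserted.

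First I would build the seeds at $r=1$. For the first congruence the natural deformation symmetrizes the numerator, replacing $(q;q^2)_k$ by $(aq;q^2)_k(q/a;q^2)_k$ and compensating in the denominator so that the limit $a\to1$ is unchanged, and likewise for the $q$-binomial sum. One then wants a parametric congruence of the type
$$
\sum_{k=0}^{(n-1)/d}\frac{(aq;q^2)_k(q/a;q^2)_k}{(\cdots)_k}\,q^k
\equiv q^{(n-1)(1+n)/4}\bigl(\tfrac{-1}{n}\bigr)
\pmod{(1-aq^n)(a-q^n)},
$$
whose companion modulo $\Phi_n(q)$ is furnished by \eqref{eq:Tauraso-1} and its Liu--Petrov analogue. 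The essential point is that at each specialization $a=q^{\pm(2j+1)n}$ the left-hand sum should truncate and evaluate in closed form, exactly as in Lemmas~\ref{lem:2} and~\ref{lem:3}; that closed form is what forces the two sides to coincide.

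Granting such seeds, the remainder runs parallel to Theorems~\ref{main-1gen} and~\ref{main-2-par}. I would apply the seed with $n\mapsto n^r$, and also with $q\mapsto q^n,\ r\mapsto r-1$, to get both sides $\equiv0$ modulo the $\prod_{j=1}^r\Phi_{n^j}(q)$ part; then prove the parametric identity modulo $\prod_j(1-aq^{(2j+1)n})(a-q^{(2j+1)n})$ by verifying equality at every $a=q^{\pm(2j+1)n}$ through the closed-form evaluation; and finally let $a\to1$, counting the cyclotomic factors gained from the product and lost from the $a$-dependent part of the common denominator. The exponent $2-d$ of $\Phi_{n^r}(q)$ in the target should surface from this count, the $d=2$ case shedding one factor of $\Phi_{n^r}(q)$ relative to $d=1$; the robustness of the second congruence for even $n$ when $d=1$ would follow from the mod-$3$ exponent and the symbol $\bigl(\tfrac{-3}{n}\bigr)$ remaining well behaved there.

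The hard part\,---\,and the reason these stay conjectural\,---\,is the very first step, the construction of the seed. The sums handled in Sections~\ref{sec2}--\ref{sec3} inherit their $r=1$ instances from genuine one-parameter families in \cite{GZ19a,Gu19c,Gu19d}, equipped with explicit Watson- or summation-type evaluations; by contrast \eqref{eq:Tauraso-1} was proved in \cite{Gu19b,LP19} with no free parameter, and no closed-form evaluation of a deformed sum at $a=q^{\pm(2j+1)n}$\,---\,the role played by Lemmas~\ref{lem:2}, \ref{lem:3} or by \eqref{lem-3-2}\,---\,is currently available. Until a parametric refinement of the Tauraso and Liu--Petrov congruences together with such an evaluation is discovered, or some other handle on the $a$-specializations is found, the microscope cannot be focused here, which is precisely why we state these only as Conjecture~\ref{conj:7}.
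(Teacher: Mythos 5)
The statement you were asked about is Conjecture~\ref{conj:7} of the paper: the authors do not prove it, and offer no argument beyond citing the $r=1$ instances (the Tauraso congruence \eqref{eq:Tauraso-1} proved in \cite{Gu19b} and its companion proved by Liu and Petrov \cite{LP19}). So there is no proof in the paper to compare yours against, and your submission is, by its own admission, not a proof either but a roadmap plus a diagnosis of the obstruction. That diagnosis is accurate and matches the paper's implicit stance: every result actually proved in Sections~\ref{sec2}--\ref{sec3} rests on a one-parameter seed congruence modulo $(1-aq^n)(a-q^n)$ whose specializations $a=q^{\pm(2j+1)n}$ admit closed-form evaluations (the role of Lemmas~\ref{lem:2} and~\ref{lem:3} and of \eqref{eq:lem-4k-1}), whereas \eqref{eq:Tauraso-1} and the Liu--Petrov congruence are known only at $a=1$ and modulo $\Phi_n(q)^2$, with no such parametric refinement available. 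Your preliminary rewriting $\frac{q^k}{(-q;q)_k}{2k\brack k}_q=\frac{(q;q^2)_k}{(q;q)_k}q^k$ is correct, and the exponent $q^{(n-1)(1+n)/4}=q^{(n^2-1)/4}$ in your putative $r=1$ seed is consistent with \eqref{eq:Tauraso-1} and with the conjectured exponent $(n-1)(1+n^{2r-1})/4$ at $r=1$; the proposed bookkeeping for the $\Phi_{n^r}(q)^{2-d}$ factor is also the natural guess. But all of this is conditional on a seed that does not exist in the literature, so nothing here constitutes progress beyond restating the conjecture together with the reason it resists the creative microscope. That is a fair assessment of the state of the problem, not a proof, and it should be recorded as such.
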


Sun \cite[Conjecture 3\,(ii),(iii)]{Su19} conjectured that
\begin{align}
\sum_{k=0}^{p^r-1}\frac{1}{2^k}{2k\choose k}
&\equiv  \bigg(\frac{-1}{p}\bigg)\sum_{k=0}^{p^{r-1}-1}\frac{1}{2^k}{2k\choose k} \pmod{p^{2r}}
\quad \text{for $p>2$},  \label{eq:sun-1} \\
\sum_{k=0}^{p^r-1}{2k\choose k}
&\equiv  \bigg(\frac{-3}{p}\bigg)\sum_{k=0}^{p^{r-1}-1}{2k\choose k} \pmod{p^{2r}}, \label{eq:sun-2}
\end{align}
and these expectations were recently confirmed by Zhang and Pan in~\cite{ZP20}.
The supercongruences \eqref{eq:sun-1} and \eqref{eq:sun-2} are somewhat different from the other ones discussed in this paper,
because already for $r=1$ they are valid for the truncations at $p-1$ but not at $(p-1)/2$.
Apart from what is stated in Conjecture~\ref{conj:7}, we could not succeed in finding complete $q$-analogues for the pair of supercongruences.

Although the method of creative microscoping\,---\,in particular, its version developed in this paper\,---\,is an adequate tool in dealing with the congruences conjectured above,
the difficulty of finding appropriate parametric $q$-congruences and $q$-hypergeometric sums seems to be a principal obstacle.
The underlying identities require a human touch, and this fact makes it impossible to predict when resolutions of (some of these) conjectures take place.

\subsection{Dwork-type $q$-congruences}
\label{sec4.2}

Dwork-type (super)congruences \eqref{eq2} we address in this paper all correspond to the choice $z=1$ and a specific shape of the unit root $\omega(z)$, namely, associated with a Dirichlet quadratic character.
Nevertheless, there is experimental evidence for existence of $q$-congruences of the type
\begin{equation}
\sum_{k=0}^{(n^r-1)/d}A_k(q)
\equiv\omega(q)\sum_{k=0}^{(n^{r-1}-1)/d}A_k(q^n)
\label{q-Dwork}
\end{equation}
modulo $\prod_{j=1}^r\Phi_{n^j}(q)$, say, for a suitable choice of $q$-hypergeometric term $A_k(q)$,
in which the `$q$-unit root' $\omega(q)$ has a more sophisticated structure than just $q^N\bigl(\frac{-D}n\bigr)$.
One such example for truncations of the $q$-series
$$
\sum_{k=0}^\infty\frac{(q;q^2)_k^4}{(q^2;q^2)_k^4}\,q^{2k}
$$
is suggested by Conjectures~4.1--4.3 in~\cite{Gu20d}, though an explicit form of $\omega(q)$ remains unclear.
A significance of this particular example is due to the connection of its $q\to1$ limit with the Dwork-type supercongruence
$$
\sum_{k=0}^{p^r-1}\frac{(\frac12)_k^4}{k!^4}
\equiv\omega_p\sum_{k=0}^{p^{r-1}-1}\frac{(\frac12)_k^4}{k!^4}\pmod{p^{3r}}
\quad\text{for}\; p>2, \; r=1,2,\dots,
$$
conjectured in \cite{RRV19}, with $r=1$ instance established earlier by Kilbourn~\cite{Ki06} (see also~\cite{LTYZ17}).
Here the unit root $\omega_p$ is the $p$-adic zero, not divisible by~$p$, of quadratic polynomial $T^2-a(p)T+p^3$,
where the traces of Frobenius $a(p)$ originate from the modular form $\sum_{m=1}^\infty a(m)q^m=q\,(q^2;q^2)_\infty^4(q^4;q^4)_\infty^4$.
The congruence is remarkably related to a modular Calabi--Yau threefold \cite{AO00},
and we expect that its $q$-analogue will shed light on a $q$-deformation of the modular form and of the cohomology groups of the threefold \cite{Sc17}.

It is certain that $q$-congruences of the type \eqref{q-Dwork} not only provide us with an efficient method for proving their $q\to1$ specializations but also have their own right to exist.

\medskip
\noindent
\textbf{Acknowledgements.}
We thank the two anonymous referees for their valuable and enthusiastic feedback.
The first author also thanks Zhi-Wei Sun for helpful comments on \eqref{eq:sun-1} and \eqref{eq:sun-2}.

\end{document}